\def\Aut{\operatorname{Aut}}
\def\lsp{\operatorname{span}}
\def\clsp{\overline{\operatorname{span}}}
\def\id{\operatorname{id}}
\def\sub{\operatorname{sub}}
\def\C{\mathbb{C}}
\def\R{\mathbb{R}}
\def\N{\mathbb{N}}
\def\Z{\mathbb{Z}}
\def\T{\mathbb{T}}
\def\AA{\mathcal{A}}
\def\KK{\mathcal{K}}
\def\LL{\mathcal{L}}
\def\OO{\mathcal{O}}
\def\TT{\mathcal{T}}
\newtheorem{thm}{Theorem}[section]
\newtheorem{cor}[thm]{Corollary}
\newtheorem{lemma}[thm]{Lemma}
\newtheorem{proposition}[thm]{Proposition}
\theoremstyle{definition}
\newtheorem{definition}[thm]{Definition}
\newtheorem{notation}[thm]{Notation}
\theoremstyle{remark}
\newtheorem{remark}[thm]{Remark}
\newtheorem{remarks}[thm]{Remarks}
\numberwithin{equation}{section}
\newcommand*{\mdis}[2]{M_{\sub}(#1,#2)}
\newcommand*{\mcon}[1]{\Omega_{\sub}^{#1}}
\newcommand{\Mod}[1]{\ (\text{mod}\ #1)}
\newcommand{\SI}{\mathbb{S}}
\newcommand{\KMS}{\ensuremath{\operatorname{KMS}}}
\begin{document}

\date{\today}
\title[The Toeplitz noncommutative solenoid and its KMS states]{The Toeplitz
noncommutative solenoid and its KMS states}

\author[Brownlowe]{Nathan Brownlowe}
\address{Nathan Brownlowe\\
    School of Mathematics and Statistics\\
    University of Sydney\\
    NSW  2006\\
    AUSTRALIA}
\email{nathan.brownlowe@sydney.edu.au}

\author[Hawkins]{Mitchell Hawkins}
\author[Sims]{Aidan Sims}
\address{Mitchell Hawkins and Aidan Sims\\
    School of Mathematics and
	Applied Statistics\\
	University of Wollongong\\
	NSW 2522\\
	AUSTRALIA}
\email{mrhawkins1989@gmail.com, asims@uow.edu.au}

\subjclass{46L55 (primary); 28D15, 37A55 (secondary)}

\thanks{This research was supported by the Australian Research Council grant DP150101595}

\begin{abstract}
We use Katsura's topological graphs to define Toeplitz extensions of Latr\'emoli\`ere and
Packer's noncommutative-solenoid $C^*$-algebras. We identify a natural dynamics on each
Toeplitz noncommutative solenoid and study the associated KMS states. Our main result
shows that the space of extreme points of the KMS simplex of the Toeplitz noncommutative
torus at a strictly positive inverse temperature is homeomorphic to a solenoid; indeed,
there is an action of the solenoid group on the Toeplitz noncommutative solenoid that
induces a free and transitive action on the extreme boundary of the KMS simplex. With the
exception of the degenerate case of trivial rotations, at inverse temperature zero there
is a unique KMS state, and only this one factors through Latr\'emoli\`ere and Packer's
noncommutative solenoid.
\end{abstract}

\maketitle

\section{Introduction}\label{sec: intro}

In this paper, we describe the KMS states of Toeplitz extensions of the noncommutative
solenoids constructed by Latr\'emoli\`ere and Packer \cite{LatPack}. We prove that the
extreme boundary of the KMS simplex is homeomorphic to a topological solenoid. In recent
years, following Bost and Connes' work \cite{BostConnes} relating KMS theory to the
Riemann zeta function, there has been a great deal of interest in the KMS structure of
$C^*$-algebras associated to algebraic and combinatorial objects. In particular Laca and
Raeburn's results \cite{LacaRaeburn:AM10} about the Toeplitz algebra of the
$ax+b$-semigroup over $\N$ precipitated a surge of activity around computations of KMS
states for Toeplitz-like extensions. Various authors have studied KMS states on Toeplitz
algebras associated to algebraic objects \cite{BaHLR,LacaRaeburnEtAl:JFA2014,CaHR},
directed graphs \cite{KajiwaraWatatani:KJM2013, aHLRS, ChristensenThomsen:JMAA2016},
higher-rank graphs \cite{Yang, aHLRS4, FarsiGillaspyEtAl:JMAA2016}, $C^*$-correspondences
\cite{LacaNeshveyev, Kakariadis:JFA2015}, and topological graphs \cite{AaHR}. The results
suggest that the KMS structure of such algebras for their natural gauge actions
frequently encodes key features of the generating object.

The noncommutative solenoids $\AA^{\mathscr{S}}_\theta$ are $C^*$-algebras introduced by
Latr\'emoli\`ere and Packer in \cite{LatPack}. They are among the first examples of
twisted $C^*$-algebras of non-compactly-generated abelian groups to be studied in detail,
and have interesting representation-theoretic properties \cite{LatPack2, LatPack3}. In
addition to the definition of noncommutative solenoids as twisted group $C^*$-algebras,
Latr\'emoli\`ere and Packer provide a number of equivalent descriptions. The one we are
interested in realises them as direct limits of noncommutative tori. Specifically, given
a positive integer $N$ and a sequence $\theta_n$ of real numbers such that
$N^2\theta_{n+1} - \theta_n$ is an integer for every $n$, there are homomorphisms
$\AA_{\theta_{n}} \to \AA_{\theta_{n+1}}$ that send the canonical unitary generators of
$\AA_{\theta_{n}}$ to the $N$th powers of the corresponding generators of
$\AA_{\theta_{n+1}}$. The noncommutative solenoid for the sequence $\theta = (\theta_n)$
is the direct limit of the $\AA_{\theta_{n}}$ under these homomorphisms. Latr\'emoli\`ere
and Packer's work focusses on features like simplicity, $K$-theory and classification of
noncommutative solenoids.

Here we use Katsura's theory of topological graph $C^*$-algebras \cite{Katsura:class_I}
to introduce a class of Toeplitz extensions $\TT^{\mathscr{S}}_\theta$ of noncommutative
solenoids, realised as direct limits of Toeplitz extensions $\TT(E_{\theta_n})$ of
noncommutative tori, and then study their KMS states. Our main result says that at
inverse temperatures above zero, the extreme boundary of the KMS simplex of
$\TT^{\mathscr{S}}_\theta$ is homeomorphic to the classical solenoid $\mathscr{S}$, and
there is an action of $\mathscr{S}$ on $\TT^{\mathscr{S}}_\theta$ that induces a free and
transitive action on the extreme KMS states. This is further evidence that KMS structure
for Toeplitz-like algebras recovers key features of the underlying generating objects.
Interestingly, this homeomorphism is subtler than one might expect: though the results of
\cite{AaHR} show that the KMS simplex of each approximating subalgebra $\TT(E_{\theta_n})
\subseteq \TT^{\mathscr{S}}_\theta$ has extreme boundary homeomorphic to the circle,
these homeomorphisms are not compatible with the connecting maps in the inductive system.
In fact, none of the extreme points in the KMS simplex of any $\TT(E_{\theta_n})$ extend
to KMS states of $\TT^{\mathscr{S}}_\theta$. Identifying the simplex of KMS states of a
given $\TT(E_{\theta_n})$ that do extend to KMS states of $\TT^{\mathscr{S}}_\theta$
requires a careful analysis of the interaction between the subinvariance relation,
described in \cite{AaHR}, that characterises KMS states on the $\TT(E_{\theta_n})$ and
the compatibility relation imposed by the connecting maps $\TT(E_{\theta_n})
\hookrightarrow \TT(E_{\theta_{n+1}})$. We think the ideas involved in this analysis may
be applicable to other investigations of KMS states on direct-limit $C^*$-algebras. Our
main result also shows that at inverse temperature~0 there is a unique KMS state (unless
all the $\theta_n$ are zero, a degenerate case that we discuss separately), and that
there are no KMS states at inverse temperatures below zero. Perhaps surprisingly, for
nonzero $\theta$ the structure of the KMS simplex of $\TT^{\mathscr{S}}_\theta$ does not
depend on whether the $\theta_n$ are rational.

We proceed as follows. After a brief preliminaries section, we begin in
Section~\ref{sec:KMSfordirectlimits} by considering KMS states for actions on direct
limits that preserve the approximating subalgebras. We record a general---and presumably
well known---description of the KMS simplex as a projective limit of the KMS simplices of
the approximating subalgebras. The connecting maps in this projective system need not be
surjective, which is the cause of the subtleties that arise in computing the KMS states
of Toeplitz noncommutative solenoids later in the paper. In Section~\ref{sec:rotations},
we consider the topological graph $E_\gamma$ that encodes rotation on the circle $\R/\Z$
by angle $\gamma \in \R$. We describe the Toeplitz algebra $\TT(E_\gamma)$ of this
topological graph as universal for an isometry $S$ and a representation $\pi$ of
$C(\R/\Z)$, and its topological-graph $C^*$-algebra $\OO(E_\gamma)$ as the quotient by
the ideal generated by $1 - SS^*$. In particular, $\OO(E_\gamma)$ is canonically
isomorphic to the noncommutative torus $\AA_\gamma$. In Section~\ref{sec: alt descrip of
NCS} we consider a sequence $\theta = (\theta_n)$ in $\R/\Z$ such that $N^2\theta_{n+1} =
\theta_n$ for all $n$. We use our description of $\TT(E_\gamma)$ from the preceding
section to describe homomorphisms $\psi_n : \TT(E_{\theta_n}) \to \TT(E_{\theta_{n+1}})$
that descend through the quotient maps to the homomorphisms $\tau_n : \OO(E_{\theta_n})
\to \OO(E_{\theta_{n+1}})$ for which the noncommutative solenoid
$\AA^{\mathscr{S}}_\theta$ is isomorphic to $\varinjlim(\OO(E_{\theta_n}), \tau_n)$.

In Section~\ref{sec: the TNCS}, we define the Toeplitz noncommutative solenoid as
$\TT^{\mathscr{S}}_\theta := \varinjlim(\TT(E_{\theta_n}), \psi_n)$, by analogy with the
description of $\AA^{\mathscr{S}}_\theta$ outlined in Section~\ref{sec: alt descrip of
NCS}. We describe a dynamics $\alpha$ on $\TT^{\mathscr{S}}_\theta$ built from the gauge
actions on the approximating subalgebras $\TT(E_{\theta_n})$. Though the gauge actions on
the $\TT(E_{\theta_n})$ are all periodic $\R$-actions, the dynamics $\alpha$ is not. We
are interested in the KMS states for this dynamics. The case
$\theta=\mathbf{0}:=(0,0,0,\dots)$ is a degenerate case, and we outline in
Remark~\ref{rmk:theta=0} how to describe the KMS states in this instance by decomposing
both the algebra $\TT^{\mathscr{S}}_{\mathbf{0}}$ and the dynamics $\alpha$ as tensor
products. Since $\theta_n \not= 0$ implies $\theta_{n+1} \not= 0$, we can thereafter
assume, without loss of generality, that every $\theta_n$ is nonzero. In the remainder of
Section~\ref{sec: the TNCS}, we use our results about direct limits from
Section~\ref{sec:KMSfordirectlimits} to realise the KMS simplex of
$\TT^{\mathscr{S}}_\theta$ for $\alpha$ at an inverse temperature $\beta > 0$ as a
projective limit of spaces $\mcon{r_n}$ of probability measures on $\R/\Z$ that satisfy a
suitable subinvariance condition. This involves an interesting interplay between the
subinvariance condition for KMS states on the $\TT(E_{\theta_n})$ obtained from
\cite{AaHR}, and the compatibility condition coming from the connecting maps $\psi_n$. We
believe that this analysis and our analysis of the space $\mcon{r_n}$ in
Section~\ref{sec:submeasures} may be of independent interest from the point of view of
ergodic theory. The theorems in \cite{AaHR} are silent on the case $\beta = 0$, so we
must argue this case separately, and our results for this case in Section~\ref{sec: the
TNCS} appear less sharp than for $\beta > 0$: they show only that the $\KMS_0$-simplex
embeds in the projective limit of the spaces $\mcon{0}$. But we shall see later that the
subinvariance condition at $\beta = 0$ has a unique solution, so that the projective
limit in this case is a one-point set. So our embedding result for $\beta = 0$ is
sufficient to show that there is a unique $\KMS_0$ state.

In Section~\ref{sec:submeasures} we analyse the space $\mcon{r}$ for $r > 0$. We first
construct a measure $m_r$ satisfying the desired subinvariance relation, and then show
that the measures obtained by composing this $m_r$ with rotations are all of the extreme
points of $\mcon{r}$. This yields an isomorphism of $\mcon{r}$ with the space of Borel
probability measures on $\R/\Z$. A key step in our analysis is the characterisation in
\cite{aHLRS} of the subinvariant measures on the vertex set of a simple-cycle graph. We
then turn in Section~\ref{sec:mainproof} to the proof of our main theorem. The key step
is to establish that the connecting maps $\psi_n : \TT(E_{\theta_n}) \to
\TT(E_{\theta_{n+1}})$ induce surjections $\mcon{r_{n+1}} \twoheadrightarrow \mcon{r_n}$
by showing that the induced maps carry extreme points to extreme points.

\section{Preliminaries}\label{sec:background}

In this section we recall the background that we need on topological graphs and their
$C^*$-algebras, as introduced by Katsura in \cite{Katsura:class_I}. We then recall the
notion of a KMS state for a $C^*$-algebra $A$ and dynamics $\alpha$.

\subsection*{Topological graphs and their \texorpdfstring{$C^*$}{C*}-algebras}
For details of the following, see \cite{Katsura:class_I}. A {\em topological graph}
$E=(E^0,E^1,r,s)$ consists of locally compact Hausdorff spaces $E^0$ and $E^1$, a
continuous map $r:E^1\to E^0$, and a local homeomorphism $s:E^1\to E^0$. In
\cite{Katsura:class_I} Katsura constructs from each topological graph $E$ a Hilbert
$C_0(E^0)$-bimodule $X(E)$ and two $C^*$-algebras: the Toeplitz algebra $\TT(E)$ and the
graph $C^*$-algebra $\OO(E)$. In this article we only encounter topological graphs of the
form $E=(Z,Z,\id,h)$, where $h:Z\to Z$ is a homeomorphism of a compact Hausdorff space
$Z$, so we only discuss the details of $X(E)$, $\TT(E)$ and $\OO(E)$ in this setting.

When $E=(Z,Z,\id,h)$, where $Z$ is compact, the module $X(E)$ is a copy of $C(Z)$ as a
Banach space. The left and right actions are given by
\[
g_1\cdot f\cdot g_2(z)=g_1(z)f(z)g_2(h(z)), \quad\text{for $g_1,g_2\in C(Z)$,
$f\in X(E)$},
\]
and the inner product by $\langle f_1,f_2\rangle
(z)=\overline{f_1(h^{-1}(z))}f_2(h^{-1}(z))$, for $f_1,f_2\in X(E)$. We denote by
$\varphi$ the homomorphism $C(Z)\to\LL(X(E))$ implementing the left action. In this case
$\varphi$ is injective.

A {\em representation} of $X(E)$ in a $C^*$-algebra $B$ is a pair $(\psi,\pi)$,
consisting of a linear map $\psi:X(E)\to B$ and a homomorphism $\pi:C(Z)\to B$ satisfying
\[
\psi(f\cdot h)=\psi(f)\pi(h),\quad \psi^*(f)\psi(g)=\pi(\langle
f,g\rangle)\quad\text{and}\quad \psi(h\cdot f)=\pi(h)\psi(f)
\]
for all $f,g\in X(E)$ and $h\in C(Z)$. The Toeplitz algebra $\TT(E)$ is the Toeplitz
algebra of $X(E)$, in the sense of \cite{FowlerRaeburn}, which is the universal
$C^*$-algebra generated by a representation of $X(E)$. We denote by $(i_{X(E)}^1,
i_{X(E)}^0)$ the representation generating $\TT(E)$.

For $f_1, f_2 \in X(E)$ there is an adjointable operator $\Theta_{f_1, f_2} \in
\LL(X(E))$ given by $\Theta_{f_1, f_2}(g) = f_1 \langle f_2, g\rangle_{C(Z)} = f_1 f^*_2
g$. The algebra of generalised compact operators on $X(E)$ is
\[
\KK(X(E)):=\clsp\{\Theta_{f_1,f_2}:f_1,f_2\in X(E)\}.
\]
Since $\Theta_{1,1} = 1_{\LL(X(E))}$, we have $\KK(X(E))=\LL(X(E))$. For a representation
$(\psi,\pi)$ of $X(E)$ in $B$ there is a homomorphism $(\psi,\pi)^{(1)}:\KK(X(E))\to B$
satisfying $(\psi,\pi)^{(1)}(\Theta_{f_1,f_2})=\psi(f_1)\psi(f_2)^*$ (see
\cite[page~202]{Pimsner}).

The graph algebra $\OO(E)$ is the Cuntz--Pimsner algebra of $X(E)$. So $\OO(E)$ is the
quotient of $\TT(E)$ by the ideal generated by
\[
\{(i_{X(E)}^1,i_{X(E)}^0)^{(1)}(\varphi(h))-i_{X(E)}^0(h): h\in C(Z)\},
\]
and is the universal $C^*$-algebra generated by a covariant representation of
$X(E)$---that is, a representation $(\psi,\pi)$ satisfying
\[
(\psi,\pi)^{(1)}(\varphi(h))=\pi(h)\quad\text{for all $h\in C(Z)$}.
\]
We denote the quotient map $\TT(E)\to\OO(E)$ by $q$, and we define
$(j_{X(E)}^1,j_{X(E)}^0) := (q\circ i_{X(E)}^1,q\circ i_{X(E)}^0)$, the covariant
representation generating $\OO(E)$.

\subsection*{KMS states}
For details of the following, see \cite{BRII}. Given a $C^*$-algebra $A$ and an action
$\alpha : \R \to \Aut(A)$, we say that $a \in A$ is \emph{analytic} for $\alpha$ if the
function $t \mapsto \alpha_t(a)$ is the restriction of an analytic function $z \mapsto
\alpha_z(a)$ from $\C$ into $A$. The set of analytic elements is always norm dense in
$A$. A state $\phi$ of $A$ is a \emph{$\KMS_0$-state} if it is an $\alpha$-invariant
trace on $A$. For $\beta \in\R \setminus \{0\}$, a state $\phi$ of $A$ is a
\emph{$\KMS_\beta$-state}, or a KMS-state at inverse temperature $\beta$, for the system
$(A, \alpha)$ if it satisfies the \emph{KMS condition}
\[
\phi(ab) = \phi(b\alpha_{i\beta}(a))\quad\text{ for all analytic $a,b \in A$.}
\]
It suffices to check this condition for all $a, b$ in any $\alpha$-invariant set of
analytic elements that spans a dense subspace of $A$. The collection of
$\KMS_\beta$-states for a dynamics $\alpha$ on a unital $C^*$-algebra $A$ forms a Choquet
simplex, and we will denote it by $\KMS_\beta(A, \alpha)$.

\section{KMS structure of direct limit \texorpdfstring{$C^*$}{C*}-algebras}\label{sec:KMSfordirectlimits}

The $C^*$-algebras of interest to us in this paper are examples of direct-limit
$C^*$-algebras. In this short section we show that the simplex of KMS states of a
direct-limit $C^*$-algebra, for an action that preserves the approximating subalgebras,
is the projective limit of the simplices of KMS states of the approximating subalgebras.

\begin{proposition}\label{prop:KMSanddirectlimits}
Suppose $\beta\in [0,\infty)$, and that $\{(A_j,\varphi_j,\alpha_j):j\in\N\}$ is a
sequence of unital $C^*$-algebras $A_j$, injective unital homomorphisms $\varphi_j:A_j\to
A_{j+1}$, and strongly continuous actions $\alpha_j:\R\to\Aut A_j$ satisfying
$\alpha_{j+1,t}\circ\varphi_j = \varphi_j\circ\alpha_{j,t}$ for all $j\in\N$ and
$t\in\R$. Denote by $A_\infty$ the direct limit $\varinjlim (A_j,\varphi_j)$, and by
$\varphi_{j,\infty}$ the canonical maps $A_j\to A_\infty$ satisfying
$\varphi_{j+1,\infty}\circ \varphi_j = \varphi_{j,\infty}$ for each $j\in\N$. There is a
strongly continuous action $\alpha : \R\to\Aut A_\infty$ satisfying
$\varphi_{j,\infty}\circ \alpha_{j,t} = \alpha_t\circ\varphi_{j,\infty}$ for each
$j\in\N$ and $t\in\R$. Moreover, there is an affine isomorphism from
$\KMS_{\beta}(A_\infty,\alpha)$ onto
$\varprojlim(\text{KMS}_\beta(A_j,\alpha_j),\phi\mapsto\phi\circ\varphi_{j-1})$ that
sends $\phi$ to $(\phi\circ\varphi_{j,\infty})_{j=0}^\infty$.
\end{proposition}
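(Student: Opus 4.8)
The plan is to construct the action $\alpha$ first, and then to prove the claimed identification by showing that the map $\Phi\colon\phi\mapsto(\phi\circ\varphi_{j,\infty})_{j}$ is a weak*-continuous affine bijection between two compact convex sets.

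To build $\alpha$ I would work on the dense $*$-subalgebra $\bigcup_j\varphi_{j,\infty}(A_j)$ of $A_\infty$. For fixed $t\in\R$ and $a\in A_j$ I set $\alpha_t(\varphi_{j,\infty}(a)) := \varphi_{j,\infty}(\alpha_{j,t}(a))$; the intertwining relation $\alpha_{j+1,t}\circ\varphi_j = \varphi_j\circ\alpha_{j,t}$ together with $\varphi_{j+1,\infty}\circ\varphi_j = \varphi_{j,\infty}$ ensures this is independent of the level at which an element is represented. Since each $\alpha_{j,t}$ is an isometric $*$-automorphism and each $\varphi_{j,\infty}$ is isometric, $\alpha_t$ is isometric on the dense subalgebra and extends to $A_\infty$; the relations $\alpha_s\alpha_t=\alpha_{s+t}$ and $\alpha_0=\id$ pass to the limit, so each $\alpha_t$ is an automorphism and $\varphi_{j,\infty}\circ\alpha_{j,t}=\alpha_t\circ\varphi_{j,\infty}$. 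Strong continuity follows from a $\varepsilon/3$ argument: for $x\in A_\infty$ pick $\varphi_{j,\infty}(a)$ within $\varepsilon/3$ of $x$, bound the two outer terms using that $\alpha_t,\alpha_s$ are isometric, and bound the middle term by $\|\alpha_{j,t}(a)-\alpha_{j,s}(a)\|$, which is small for $s$ near $t$ by strong continuity of $\alpha_j$.

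Next I would analyse $\Phi$. Each $\varphi_{j,\infty}$ is unital and injective (a direct limit of injective unital homomorphisms), so $\phi\circ\varphi_{j,\infty}$ is a state; it is a $\KMS_\beta$-state for $(A_j,\alpha_j)$ because $\alpha_z(\varphi_{j,\infty}(a))=\varphi_{j,\infty}(\alpha_{j,z}(a))$ shows $\varphi_{j,\infty}$ carries $\alpha_j$-analytic elements to $\alpha$-analytic ones, so the $\KMS_\beta$-condition of $\phi$ (or, for $\beta=0$, the invariant-trace condition) descends through $\varphi_{j,\infty}$. Compatibility with the connecting maps is immediate from $\varphi_{j,\infty}\circ\varphi_{j-1}=\varphi_{j-1,\infty}$, so $\Phi$ lands in the projective limit, and it is affine since composition with a fixed homomorphism is linear. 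Injectivity is clear: two KMS states agreeing on every $\varphi_{j,\infty}(A_j)$ agree on a dense subalgebra, hence coincide.

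The main work is surjectivity. Given a compatible family $(\phi_j)_j$, I define $\phi$ on $\bigcup_j\varphi_{j,\infty}(A_j)$ by $\phi(\varphi_{j,\infty}(a))=\phi_j(a)$. Every element of this subalgebra can be written at an arbitrarily high level, and repeated use of the compatibility relation $\phi_{j-1}=\phi_j\circ\varphi_{j-1}$ shows the value is independent of the chosen level, so $\phi$ is well defined; it agrees with the state $\phi_j$ on each $\varphi_{j,\infty}(A_j)$, hence is contractive and extends to a state of $A_\infty$. To see $\phi$ is a $\KMS_\beta$-state I would verify the KMS condition on the $\alpha$-invariant, densely-spanning set of elements $\varphi_{j,\infty}(a)$ with $a$ analytic for $\alpha_j$: given two such elements at levels $j$ and $k$, push both up to $m=\max\{j,k\}$ via the connecting maps (which intertwine the dynamics and preserve analyticity) and apply the $\KMS_\beta$-condition for $\phi_m$. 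Since it suffices to check the KMS condition on such a spanning set of analytic elements, as recalled in the preliminaries, this yields $\phi\in\KMS_\beta(A_\infty,\alpha)$ with $\Phi(\phi)=(\phi_j)_j$. Finally, both $\KMS_\beta(A_\infty,\alpha)$ and the projective limit are weak*-compact and Hausdorff---the latter as a weak*-closed subset of the product of the compact simplices $\KMS_\beta(A_j,\alpha_j)$---and $\Phi$ is weak*-continuous because each coordinate $\phi\mapsto\phi(\varphi_{j,\infty}(a))$ is; a continuous affine bijection of compact convex Hausdorff sets is an affine homeomorphism. The step requiring the most care is the surjectivity argument, specifically checking that the reconstructed state satisfies the KMS condition globally; the difficulty is bookkeeping---aligning two analytic elements at a common finite level---but it is exactly where injectivity of the $\varphi_j$, the intertwining of the dynamics, and compatibility of the family must all be used at once.
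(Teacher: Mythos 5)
Your proposal is correct and follows essentially the same route as the paper's proof: construct $\alpha$ from the intertwining relations (your direct construction on the dense union $\bigcup_j\varphi_{j,\infty}(A_j)$ is just the universal property of $A_\infty$ unpacked), then show $\phi\mapsto(\phi\circ\varphi_{j,\infty})_j$ is an affine continuous bijection, with surjectivity handled exactly as in the paper---defining the state levelwise, using injectivity of the $\varphi_j$ for well-definedness, and verifying the KMS condition on the spanning set of pushed-forward analytic elements by aligning at a common level $m=\max\{j,k\}$. The only cosmetic difference is that you make explicit the final step (a continuous affine bijection between compact Hausdorff convex sets is an affine homeomorphism), which the paper leaves implicit.
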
	
\begin{proof}
For each $j\in\N$ and $t\in\R$ we have
\[
(\varphi_{j+1,\infty}\circ \alpha_{j+1,t})\circ \varphi_j= \varphi_{j+1,\infty}\circ\varphi_j\circ\alpha_{j,t}=\varphi_{j,\infty}\circ\alpha_{j,t}.
\]
So the universal property of $A_\infty$ gives a homomorphism $\alpha_t:A_\infty\to
A_\infty$ such that $\alpha_t \circ \varphi_{j,\infty} = \varphi_{j,\infty} \circ
\alpha_{j,t}$ for all $j$.

It is straightforward to check that each $\alpha_{t}$ is an automorphism of $A_\infty$
with inverse $\alpha_{-t}$, and that $\alpha:\R\to\Aut A_\infty$ is an action satisfying
$\varphi_{j,\infty}\circ \alpha_{j,t} = \alpha_{t}\circ\varphi_{j,\infty}$. An
$\varepsilon/3$-argument using that each $\alpha_j$ is strongly continuous and that
$\bigcup_j \varphi_{j,\infty}(A_j)$ is dense in $A_\infty$ shows that $\alpha$ is
strongly continuous.

For $j\in\N$ and $\phi\in\KMS_\beta(A_\infty,\alpha)$ define $h_j(\phi):=\phi\circ
\varphi_{j,\infty}$. Since $\KMS_\beta$ states restrict to $\KMS_\beta$ states on
invariant unital subalgebras, $h_j$ maps $\KMS_\beta(A_\infty,\alpha)$ to
$\KMS_\beta(A_j,\alpha_j)$ for each $j$. We have
\[	
h_{j+1}\circ \varphi_j =(\phi\circ \varphi_{j+1,\infty})\circ\varphi_j=\phi\circ (\varphi_{j+1,\infty}\circ\varphi_j)=\phi\circ \varphi_{j,\infty}=h_j,
\]
and so the universal property of $\varprojlim \KMS_\beta(A_j,\alpha_j)$ gives a map $h$
from $\KMS_\beta(A_\infty,\alpha)$ into $\varprojlim \KMS_\beta(A_j,\alpha_j)$ satisfying
$p_j\circ h = h_j$, where $p_j$ denotes the canonical projection onto
$\KMS_\beta(A_j,\beta_j)$. We claim that $h$ is the desired affine isomorphism.

The map $h$ is obviously affine. To see that $h$ is surjective, fix
$(\phi_j)_{j=0}^\infty\in\varprojlim(\KMS_\beta(A_j,\alpha_j))$, and take $j\le k$, $a\in
A_j$ and $b\in A_k$ with $\varphi_{j,\infty}(a)=\varphi_{k,\infty}(b)$. Then
\[
0=\varphi_{k,\infty}(b)-\varphi_{j,\infty}(a)=\varphi_{k,\infty}(b)-\varphi_{k,\infty}(\varphi_{k-1}\circ\dots\circ\varphi_j(a))=\varphi_{k,\infty}(b-\varphi_{k-1}\circ\dots\circ\varphi_j(a)).
\]
Since each $\varphi_j$ is injective, each $\varphi_{j,\infty}$ is injective, and so
$b=\varphi_{k-1}\circ\dots\circ\varphi_j(a)$. Now
\[
\phi_j(a)=\phi_k(\varphi_{k-1}\circ\dots\circ\varphi_j(a))=\phi_k(b),
\]
and so there is a well-defined linear map
$\phi_\infty:\bigcup_{j=0}^\infty\varphi_{j,\infty}(A_j)\to\C$ satisfying
$\phi_\infty(\varphi_{j,\infty}(a))=\phi_j(a)$ for all $j\in\N$ and $a\in A_j$. Since
each $\varphi_{j,\infty}$ is isometric and each $\phi_j$ is norm-decreasing, each
$\phi_\infty \circ \varphi_{j,\infty}$ is norm-decreasing, so $\phi_\infty$ is
norm-decreasing. It therefore extends to a norm-decreasing $\phi_\infty : A_\infty \to
\C$. Since $\|\phi_\infty\| \ge \|\phi_\infty\circ\varphi_j\| = \|\phi_j\| = 1$, we see
that $\|\phi_\infty\| = 1$. Since $\bigcup_j \varphi_{j,\infty}\big((A_j)_+\big)$ is
dense in $(A_\infty)_+$ and since each $\phi_\infty \circ \varphi_{j,\infty} = \phi_j$ is
positive, $\phi_\infty$ is positive, and therefore a state of $A_\infty$.

To see that $\phi_\infty$ is KMS, observe that if $a \in A_j$ is $\alpha_j$-analytic,
then $\varphi_{j,\infty}(a)$ is $\alpha$-analytic. Indeed, since $z \mapsto
\varphi_{j,\infty}(\alpha_{j,z}(a))$ is an analytic extension of $t \mapsto
\alpha_{t}(\varphi_{j,\infty}(a))$, the analytic extension of $t \mapsto
\alpha_t(\varphi_{j,\infty}(a))$ is given by
\[
    \alpha_{z}(\varphi_{j,\infty}(a)) = \varphi_{j,\infty}(\alpha_{j,z}(a)).
\]
So $\bigcup_j \{\varphi_{j,\infty}(a) : a \in A_j \text{ is analytic}\}$ is an
$\alpha$-invariant dense subspace of analytic elements in $A_\infty$. So it suffices to
show that $\phi_{\infty}\big(\varphi_{j,\infty}(a)\varphi_{k,\infty}(b)\big) =
\phi_{\infty}\big(\varphi_{k,\infty}(b)\alpha_{i\beta}(\varphi_{j,\infty}(a))\big)$
whenever $a \in A_j$ and $b \in A_k$ are analytic. For this, let $l := \max\{j,k\}$ and
observe that $a' := \varphi_{j,l}$ and $b' := \varphi_{k,l}(b)$ are $\alpha_l$-analytic,
and so
\begin{align*}
\phi_\infty(\varphi_{j,\infty}(a)\varphi_{k,\infty}(b))
    &= \phi_\infty(\varphi_{l,\infty}(a'b')
    =\phi_l(a'b')
    = \phi_l(b'\alpha_{l,i\beta}(a'))\\
    &= \phi_\infty\big(\varphi_{l,\infty}(b')\alpha_{i\beta}(\varphi_{l,\infty}(a'))\big)
    = \phi_\infty\big(\varphi_{j,\infty}(b)\alpha_{i\beta}(\varphi_{j,\infty}(a))\big).
\end{align*}
Since $h(\phi_\infty)=(\phi_\infty\circ \varphi_{j,\infty})_{j=0}^\infty =
(\phi_j)_{j=0}^\infty$, we see that $h$ is surjective.

Checking that $h$ is injective is straightforward: if $h(\phi)=h(\psi)$, then
$\phi\circ\varphi_{j,\infty}=\psi\circ\varphi_{j,\infty}$ for all $j\in \N$, which
implies that $\phi$ and $\psi$ agree on the dense subset
$\bigcup_{j=0}^\infty\varphi_{j,\infty}(A_j)$, giving $\phi=\psi$. 	

To see that $h$ is continuous, let $(\phi_{\lambda})_{\lambda\in\Lambda}$ be a net in
$\KMS_\beta(A_\infty,\alpha)$ converging weak* to $\phi\in \KMS_\beta(A_\infty,\alpha)$.
Then $p_j(h(\phi_\lambda))=\phi_\lambda\circ\varphi_{j,\infty}$ converges weak* to
$p_j(h(\phi))=\phi\circ\varphi_{j,\infty}$ for each $j\in\N$. Since the topology on the
inverse limit is the initial topology induced by the projections $p_j$, this says that
$h(\phi_\lambda)$ converges weak* to $h(\phi)$. Hence $h$ is continuous.
\end{proof}

\section{\texorpdfstring{$C^*$}{C*}-algebras from rotations on the circle}\label{sec:rotations}
We are interested in topological graphs built from rotations on the circle. We write
\[
\SI := \R/\Z
\]
for the circle, which we frequently identify with $[0,1)$ under addition modulo 1.

For $\gamma\in\R$, let $R_\gamma$ denote clockwise rotation of the circle $\SI$ by angle
$\gamma$. So $R_\gamma(t) = t - \gamma~(\operatorname{mod}~1)$. Each $R_\gamma$ is a
homeomorphism of $\SI$, and we denote by $E_\gamma:=(\SI,\SI,\id_{\SI},R_\gamma)$ the
corresponding topological graph. We denote the Hilbert bimodule $X(E_\gamma)$ by
$C(\SI)_\gamma$, its inner product by $\langle\cdot,\cdot\rangle_\gamma$, and the
homomorphism implementing the left action by $\phi_\gamma:C(\SI)\to \LL(C(\SI)_\gamma)$.

We can give alternative characterisations of the $C^*$-algebras $\TT(E_\gamma)$ and
$\OO(E_\gamma)$. This is certainly not new: the description of $\OO(E_\gamma)$ goes back
to Pimsner \cite[page~193, Example~3]{Pimsner}. But we could not locate the exact
formulation that we want for the description of $\TT(E_\gamma)$ in the literature.

\begin{definition}\label{def: TC&CPairs}
A {\em Toeplitz pair for $E_\gamma$ in a $C^*$-algebra $B$} is a pair $(\pi,S)$
consisting of a homomorphism $\pi$ of $C(\SI)$ into $B$, and an isometry $S\in B$
satisfying
\[
S\pi(f)=\pi(f\circ R_\gamma)S\quad\text{for all $f\in C(\SI)$}.
\]
A {\em covariant pair for $E_\gamma$} is a Toeplitz pair $(\pi,W)$ in which $W$ is a
unitary.
\end{definition}

\begin{proposition}\label{prop:TC&CPairs C*s}
Let $\gamma\in\R$ and $E_\gamma=(\SI,\SI,\id_{\SI},R_\gamma)$.
\begin{enumerate}
\item[(1)] The pair $(i_\gamma,s_\gamma):=(i_{X(E_\gamma)}^0,i_{X(E_\gamma)}^1(1))$
    is a Toeplitz pair for $E_\gamma$ that generates $\TT(E_\gamma)$. Moreover,
    $\TT(E_\gamma)$ is the universal $C^*$-algebra generated by a Toeplitz pair for
    $E_\gamma$: if $(\pi,S)$ is a Toeplitz pair in a $C^*$-algebra $B$, then there is
    a homomorphism $\pi\times S:\TT(E_\gamma)\to B$ satisfying $(\pi\times S)\circ
    i_\gamma=\pi$ and $(\pi\times S)(s_\gamma)=S$.

\item[(2)] The pair $(j_\gamma,w_\gamma):=(j_{X(E_\gamma)}^0,j_{X(E_\gamma)}^1(1))$
    is a covariant pair for $E_\gamma$ that generates $\OO(E_\gamma)$. Moreover,
    $\OO(E_\gamma)$ is the universal $C^*$-algebra generated by a covariant pair for
    $E_\gamma$: if $(\pi,W)$ is a covariant pair in a $C^*$-algebra $B$, then there
    is a homomorphism $\pi\times W:\OO(E_\gamma)\to B$ satisfying $(\pi\times W)\circ
    j_\gamma=\pi$ and $(\pi\times W)(w_\gamma)=W$.
\end{enumerate}
\end{proposition}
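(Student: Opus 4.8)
The plan is to set up a dictionary between Katsura representations $(\psi,\pi)$ of the bimodule $X(E_\gamma)=C(\SI)_\gamma$ and Toeplitz pairs $(\pi,S)$, exploiting that the constant function $1$ generates $X(E_\gamma)$ under the two module actions. In one direction a representation gives the pair $(\pi,\psi(1))$; in the other a Toeplitz pair $(\pi,S)$ gives the assignment $\psi(f):=\pi(f)S$. Once this correspondence is in place, part~(1) follows from the Fowler--Raeburn universal property of $\TT(E_\gamma)$ as the Toeplitz algebra of $X(E_\gamma)$, and part~(2) follows by passing to the Cuntz--Pimsner quotient $\OO(E_\gamma)$.

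For part~(1), first I would check that $(i_\gamma,s_\gamma)$ is a Toeplitz pair. Since $\langle 1,1\rangle_\gamma\equiv 1$ and $i_\gamma(1)$ is the unit of $\TT(E_\gamma)$, the relation $s_\gamma^*s_\gamma=i_\gamma(\langle 1,1\rangle_\gamma)=i_\gamma(1)$ shows $s_\gamma$ is an isometry. The covariance relation I would verify by rewriting both sides using the representation identities: the right action gives $s_\gamma i_\gamma(f)=i^1_{X(E_\gamma)}(1\cdot f)$ with $1\cdot f=f\circ R_\gamma$, while the left action gives $i_\gamma(f\circ R_\gamma)s_\gamma=i^1_{X(E_\gamma)}\big((f\circ R_\gamma)\cdot 1\big)=i^1_{X(E_\gamma)}(f\circ R_\gamma)$, so the two agree. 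The same identity $i^1_{X(E_\gamma)}(f)=i_\gamma(f)s_\gamma$ (coming from $f=f\cdot 1$ under the left action) writes every generator $i^1_{X(E_\gamma)}(f)$ in terms of $i_\gamma$ and $s_\gamma$, so $(i_\gamma,s_\gamma)$ generates $\TT(E_\gamma)$.

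The substantive step is the universal property. Given a Toeplitz pair $(\pi,S)$ in $B$, I would put $\psi(f):=\pi(f)S$ and verify that $(\psi,\pi)$ is a representation of $X(E_\gamma)$; the universal property of $\TT(E_\gamma)$ then yields $\pi\times S$ with $(\pi\times S)\circ i_\gamma=\pi$ and $(\pi\times S)(s_\gamma)=\pi(1)S=S$ (the last equality using that $\pi$ is unital, consistent with $i_\gamma(1)$ being the unit of $\TT(E_\gamma)$). The two module-action relations for $(\psi,\pi)$ are direct rewrites using the covariance relation, but I expect the inner-product relation $\psi(f)^*\psi(g)=\pi(\langle f,g\rangle_\gamma)$ to be the main obstacle: it reduces to proving $S^*\pi(k)S=\pi(k\circ R_{-\gamma})$ for $k\in C(\SI)$. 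This I would obtain by applying the covariance relation to $k\circ R_{-\gamma}$ and using $R_{-\gamma}\circ R_\gamma=\id_\SI$ to get $S\pi(k\circ R_{-\gamma})=\pi(k)S$, then multiplying on the left by $S^*$ and invoking $S^*S=1$. Matching this against the identity $\langle f,g\rangle_\gamma=(\overline{f}g)\circ R_{-\gamma}$, with $k=\overline{f}g$, closes the computation; this is where the isometry condition and the precise form of the bimodule inner product must interact correctly.

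For part~(2), I would use that $\OO(E_\gamma)$ is the quotient of $\TT(E_\gamma)$ by the Cuntz--Pimsner ideal and that the quotient map $q$ is a unital surjection. Then $(j_\gamma,w_\gamma)=(q\circ i_\gamma,\,q(s_\gamma))$ is again a Toeplitz pair and generates $\OO(E_\gamma)$. To see $w_\gamma$ is unitary, I would record that $\varphi_\gamma(h)=\Theta_{h,1}$ in $\LL(X(E_\gamma))$, so that $\varphi_\gamma(1)=\Theta_{1,1}=1_{\LL(X(E_\gamma))}$; the covariance condition $(j^1_{X(E_\gamma)},j^0_{X(E_\gamma)})^{(1)}(\varphi_\gamma(1))=j_\gamma(1)$ then reads $w_\gamma w_\gamma^*=1_{\OO(E_\gamma)}$, which together with the isometry relation makes $w_\gamma$ unitary. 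Finally, given any covariant pair $(\pi,W)$, part~(1) supplies $\pi\times W\colon\TT(E_\gamma)\to B$ coming from the representation $\psi(f)=\pi(f)W$, and I would check it kills the ideal generators: since $(\pi\times W)\circ(i^1_{X(E_\gamma)},i^0_{X(E_\gamma)})^{(1)}=(\psi,\pi)^{(1)}$, we compute $(\psi,\pi)^{(1)}(\varphi_\gamma(h))=\psi(h)\psi(1)^*=\pi(h)WW^*\pi(1)=\pi(h)$, the last step using $WW^*=1$ and $\pi(h)\pi(1)=\pi(h)$. Hence $\pi\times W$ factors through $\OO(E_\gamma)$, giving the desired homomorphism.
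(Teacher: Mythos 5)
Your proposal is correct and follows essentially the same route as the paper's proof: the dictionary $(\pi,S)\leftrightarrow(\psi,\pi)$ via $\psi(f)=\pi(f)S$, with the inner-product check reducing to $S^*\pi(k)S=\pi(k\circ R_\gamma^{-1})$ exactly as in the paper, the Fowler--Raeburn universal property for part~(1), and the identity $\varphi_\gamma(h)=\Theta_{h,1}$ driving part~(2). The only cosmetic differences are that the paper packages part~(2) as the ideal identity (the Cuntz--Pimsner ideal equals the ideal generated by $s_\gamma s_\gamma^*-1$ inside $\TT(E_\gamma)$) while you run the same computation after mapping into $B$, and that you make explicit the unitality of $\pi$ which the paper uses tacitly in writing $\psi(1)=\pi(1)S=S$.
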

\begin{proof}
We have $s_\gamma^*s_\gamma=i_{X(E_\gamma)}^0(\langle
1,1\rangle_\gamma)=i_{X(E_\gamma)}^0(1)=1$, and so $s_\gamma$ is an isometry. For each
$f\in C(\SI)$ we have
\begin{align*}
i_\gamma(f\circ R_\gamma)s_\gamma
    &= i_{X(E_\gamma)}^0(f\circ R_\gamma)i_{X(E_\gamma)}^1(1)
     = i_{X(E_\gamma)}^1((f\circ R_\gamma)\cdot1)\\
    &= i_{X(E_\gamma)}^1(1\cdot f)
     =i_{X(E_\gamma)}^1(1)i_{X(E_\gamma)}^0(f)=s_\gamma i_\gamma(f),
\end{align*}
and so $(i_\gamma,s_\gamma)$ is a Toeplitz pair. For $f\in C(\SI)_\gamma$ we have
$i_{X(E_\gamma)}^1(f)=i_\gamma(f)s_\gamma$, so the pair $(i_\gamma,i_\eta^1(1))$
generates the ranges of both $i_{X(E_\gamma)}^0$ and $i_{X(E_\gamma)}^1$, and hence all
of $\TT(E_\gamma)$.

Now suppose $B$ is a unital $C^*$-algebra and $\pi:C(\SI)\to B$ and $S\in B$ form a
Toeplitz pair $(\pi,S)$ for $E_\gamma$ in $B$. Define $\psi:C(\SI)_\gamma\to B$ by
$\psi(f)=\pi(f)S$. We claim that $(\psi,\pi)$ is a representation of $C(\SI)_\gamma$ in
$B$. For each $f\in C(\SI)_\gamma$ and $g\in C(\SI)$ we have
\[
\pi(g)\psi(f)=\pi(g)\pi(f)S=\pi(gf)S=\psi(gf)=\psi(g\cdot f)
\]
and
\[
\psi(f)\pi(g)=\pi(f)S\pi(g)=\pi(f(g\circ R_\gamma))S=\psi(f(g\circ
R_\gamma))=\psi(f\cdot g).
\]
To check that the inner product is preserved, we let $f,h\in C(\SI)_\gamma$ and calculate
\begin{align*}
\psi(f)^*\psi(h)&=S^*\pi(f^*)\pi(h)S
     = S^*\pi(f^*\circ R_\gamma^{-1}\circ R_\gamma)\pi(h\circ R_\gamma^{-1}\circ R_\gamma)S\\
    &= \pi(f^*\circ R_\gamma^{-1})S^*S\pi(h\circ R_\gamma^{-1})
     = \pi((f^*h)\circ R_\gamma^{-1}).
\end{align*}
We have $\langle
f,h\rangle_\gamma(z)=\overline{f(R_\gamma^{-1}(z))}g(R_\gamma^{-1}(z))=(f^*g)\circ
 R_\gamma^{-1}(z)$. So $\langle f,h\rangle_\gamma=(f^*h)\circ R_\gamma^{-1}$,
and hence $\psi(f)^*\psi(h)=\pi(\langle f,h\rangle_\gamma)$. This proves the claim.

The universal property of $\TT(E_\gamma)$ yields a homomorphism
$\psi\times\pi:\TT(E_\gamma)\to B$ satisfying $(\psi\times\pi)\circ
i_{X(E_\gamma)}^1=\psi$ and $(\psi\times\pi)\circ i_{X(E_\gamma)}^0=\pi$. Let $\pi\times
S:=\psi\times\pi$. Then
\[
    (\pi\times S)\circ i_{X(E_\gamma)}^0 = (\psi\times\pi)\circ i_{X(E_\gamma)}^0 = \pi,
\]
and
\[
    (\pi\times S)(s_\gamma) = (\psi\times\pi)(i_{X(E_\gamma)}^1(1)) = \psi(1) = \pi(1)S = S.
\]
Hence $\TT(E_\gamma)$ is the universal $C^*$-algebra generated by a Toeplitz pair for
$E_\gamma$.

To prove (2) it suffices to show that the ideal $I$ generated by
\[
\{(i_{X(E_\gamma)}^1,i_{X(E_\gamma)}^0)^{(1)}(\varphi_\gamma(f))-i_{X(E_\gamma)}^0(f):f\in
 C(\SI)\}
\]
is the ideal generated by the element $s_\gamma s_\gamma^*-1$. We have
\[
s_\gamma s_\gamma^*-1=
(i_{X(E_\gamma)}^1,i_{X(E_\gamma)}^0)^{(1)}(\Theta_{1,1})-i_{X(E_\gamma)}^0
=
(i_{X(E_\gamma)}^1,i_{X(E_\gamma)}^0)^{(1)}(\phi_\eta(1))-i_{X(E_\gamma)}^0(1)\in
 I,
\]
and hence the ideal generated by $s_\gamma s_\gamma^*-1$ is contained in $I$. For the
reverse containment we first note that $\varphi_\gamma(f)=\Theta_{f,1}$ for all $f\in
C(\SI)$. Then
\begin{align*}
(i_{X(E_\gamma)}^1,i_{X(E_\gamma)}^0)^{(1)}(\varphi_\eta(f))-i_\eta^0(f)
    &= (i_{X(E_\gamma)}^1,i_{X(E_\gamma)}^0)^{(1)}(\Theta_{f,1})-i_\eta^0(f)\\
    &=i_{X(E_\gamma)}^1(f)i_{X(E_\gamma)}^1(1)^*-i_\eta^0(f)\\
    &= i_{X(E_\gamma)}^0(f)i_{X(E_\gamma)}^1(1)i_{X(E_\gamma)}^1(1)^*-i_{X(E_\gamma)}^0(f)\\
    &=i_{X(E_\gamma)}^0(f)\big(s_\gamma s_\gamma^*-1\big),
\end{align*}
and the result follows.
\end{proof}

\begin{remarks}\label{rems:Uniqueness thm for TT}
\begin{enumerate}
\item We saw in the proof of Proposition~\ref{prop:TC&CPairs C*s} that a Toeplitz
    pair $(\pi,S)$ for $E_\gamma$ gives a representation $(\psi,\pi)$ of
    $X(E_\gamma)$ such that $\psi(f)=\pi(f)S$. We denote the homomorphism
    $(\psi,\pi)^{(1)}$ of $\KK(X(E_\gamma))$ by $(\pi,S)^{(1)}$; so
    $(\pi,S)^{(1)}(\Theta_{f,g})=\pi(f)SS^*\pi(g)^*$.
\item In \cite[Theorem~6.2]{Kat} Katsura proved a gauge-invariant uniqueness theorem
    for the Toeplitz algebra of a Hilbert bimodule. Suppose $A$ is a $C^*$-algebra,
    $X$ is a Hilbert $A$-bimodule, and $(\psi,\pi)$ is a representation of $X$ in a
    $C^*$-algebra $B$. The gauge-invariant uniqueness theorem says that
    $\psi\times\pi:\TT(X)\to B$ is injective if $B$ carries a gauge action,
    $\psi\times \pi$ intertwines the gauge actions on $\TT(X)$ and $B$, and the ideal
\[
    \{a\in A : \pi(a)\in (\psi,\pi)^{(1)}(\KK(X))\}
\]
of $A$ is trivial. If $(\pi,S)$ is a Toeplitz pair for $E_\gamma$, then this ideal is
$\{f\in C(\SI): \pi(f)\in (\pi,S)^{(1)}(\KK(X(E_\gamma)))\}$, which we can write as
\[
    \{f\in C(\SI):\pi(f)\in\clsp\{\pi(g)SS^*\pi(h) : g,h \in C(\SI)\}\}.
\]
We denote this ideal by $I_{(\pi,S)}$.
\item\label{it:kernel trivial} Proposition~4.10 of \cite{Kat} says that
    $I_{(i_\gamma,s_\gamma)}=0$.
\end{enumerate}
\end{remarks}

We can give spanning families for $\TT(E_\gamma)$ and $\OO(E_\gamma)$ using Toeplitz and
covariant pairs.

\begin{proposition}\label{prop: spanning families}
Let $\gamma\in\R$ and $E_\gamma=(\SI,\SI,\id_\SI,R_\gamma)$. Then
\[
    \TT(E_\gamma)=\clsp\{s_\gamma^mi_\gamma(f) {s_\gamma^*}^n:m,n\in\N,\,f\in C(\SI)\},
\]
and
\[
    \OO(E_\gamma)=\clsp\{w_\gamma^mj_\gamma(f) {w_\gamma^*}^n:m,n\in\N,\,f\in C(\SI)\}.
\]
\end{proposition}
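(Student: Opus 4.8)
The plan is to show that the closed subspace $B := \clsp\{s_\gamma^m i_\gamma(f){s_\gamma^*}^n : m,n\in\N,\ f\in C(\SI)\}$ is a $C^*$-subalgebra of $\TT(E_\gamma)$ that contains the generators $i_\gamma(f)$ and $s_\gamma$; since $(i_\gamma,s_\gamma)$ generates $\TT(E_\gamma)$ by Proposition~\ref{prop:TC&CPairs C*s}(1), this forces $B=\TT(E_\gamma)$. The inclusion $B\subseteq\TT(E_\gamma)$ is immediate, and taking $m=n=0$ shows $i_\gamma(f)\in B$ while $m=1$, $f=1$, $n=0$ shows $s_\gamma\in B$. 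Closure under adjoints is routine, since $i_\gamma(f)^*=i_\gamma(\bar f)$ gives $(s_\gamma^m i_\gamma(f){s_\gamma^*}^n)^*=s_\gamma^n i_\gamma(\bar f){s_\gamma^*}^m\in B$. The real work is to show $B$ is closed under multiplication; by continuity of multiplication it suffices to check that a product of two spanning elements lies in $\lsp\{s_\gamma^m i_\gamma(f){s_\gamma^*}^n\}$.

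First I would record the two algebraic moves supplied by the Toeplitz-pair relations. Because $s_\gamma$ is an isometry we have $s_\gamma^*s_\gamma=1$, and iterating the defining relation $s_\gamma i_\gamma(f)=i_\gamma(f\circ R_\gamma)s_\gamma$ yields $i_\gamma(f)s_\gamma^k=s_\gamma^k i_\gamma(f\circ R_\gamma^{-k})$ for all $k\in\N$, with a corresponding identity for ${s_\gamma^*}^k$ obtained by taking adjoints. To multiply $(s_\gamma^m i_\gamma(f){s_\gamma^*}^n)(s_\gamma^p i_\gamma(g){s_\gamma^*}^q)$ the key is to simplify the inner factor ${s_\gamma^*}^n s_\gamma^p$: since $s_\gamma^*s_\gamma=1$ this collapses to $s_\gamma^{p-n}$ when $n\le p$ and to ${s_\gamma^*}^{n-p}$ when $n>p$, so in either case it is a single one-sided power of $s_\gamma$ or of $s_\gamma^*$.

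After that I would commute the surviving power of $s_\gamma$ (or $s_\gamma^*$) through $i_\gamma(f)$ using the iterated relation above, so that the two central $i_\gamma$-factors become adjacent and combine via $i_\gamma(f')i_\gamma(g)=i_\gamma(f'g)$. In the case $n\le p$ this rewrites the product as $s_\gamma^{m+p-n}\,i_\gamma\big((f\circ R_\gamma^{\,n-p})g\big){s_\gamma^*}^q$, which again has the required normal form; the case $n>p$ is symmetric, landing in the form with exponent $q+(n-p)$ on $s_\gamma^*$. Hence products of spanning elements lie in $\lsp\{s_\gamma^m i_\gamma(f){s_\gamma^*}^n\}$, so $B$ is a $C^*$-subalgebra and the first displayed equality follows.

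For the statement about $\OO(E_\gamma)$ I would push the result through the quotient map $q:\TT(E_\gamma)\to\OO(E_\gamma)$. Since $q$ is a surjective $*$-homomorphism with $q(s_\gamma)=w_\gamma$ and $q\circ i_\gamma=j_\gamma$, the set $\{w_\gamma^m j_\gamma(f){w_\gamma^*}^n\}$ is exactly the image of the spanning family, and it satisfies the same relations (now with $w_\gamma$ unitary, so that both $w_\gamma^*w_\gamma=1$ and $w_\gamma w_\gamma^*=1$). Rerunning the $C^*$-subalgebra argument verbatim inside $\OO(E_\gamma)$, and using that $(j_\gamma,w_\gamma)$ generates $\OO(E_\gamma)$ by Proposition~\ref{prop:TC&CPairs C*s}(2), yields the second equality. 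The only point requiring care is the exponent bookkeeping in the product computation, in particular keeping track of which power of $R_\gamma$ appears; but this is purely mechanical once the isometry relation has been used to reduce ${s_\gamma^*}^n s_\gamma^p$ to a single one-sided power.
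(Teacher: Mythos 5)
Your proposal is correct and follows essentially the same route as the paper: both reduce to showing the span is a $*$-subalgebra containing the generators, with the key computation being the collapse of ${s_\gamma^*}^n s_\gamma^p$ via the isometry relation followed by commuting the surviving power through $i_\gamma(f)$ using the iterated Toeplitz relation (your exponents, e.g.\ $f\circ R_\gamma^{n-p}$ in the case $n\le p$, agree with the paper's $f\circ R_\gamma^{-(p-n)}$). The statement for $\OO(E_\gamma)$ is likewise obtained in the paper exactly as you suggest, by pushing the spanning family through the quotient map.
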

\begin{proof}
The set $\lsp\{s_\gamma^mi_\gamma(f) {s_\gamma^*}^n:m,n\in\N,\,f\in C(\SI)\}$ contains
the generators of $\TT(E_\gamma)$, so it suffices to show that it is a $*$-subalgebra. It
is obviously closed under involution; that it is closed under multiplication follows from
the calculation
\begin{align*}
s_\gamma^mi_\gamma(f){s_\gamma^*}^ns_\gamma^pi_\gamma(g){s_\gamma^*}^q
    &= \begin{cases}
        s_\gamma^mi_\gamma(f){s_\gamma^*}^{n-p}i_\gamma(g){s_\gamma^*}^q & \text{if $n\ge p$}\\
        s_\gamma^mi_\gamma(f)s_\gamma^{p-n}i_\gamma(g){s_\gamma^*}^q & \text{if $n< p$}
    \end{cases}\\
    &= \begin{cases}
        s_\gamma^mi_\gamma(f(g\circ R_\gamma^{-(n-p)})){s_\gamma^*}^{n-p+q} & \text{if $n\ge p$}\\
        s_\gamma^{m+p-n}i_\gamma((f\circ R_\gamma^{-(p-n)})g){s_\gamma^*}^q & \text{if $n< p$.}
    \end{cases}
\end{align*}
Since each $s_\gamma^mi_\gamma(f) {s_\gamma^*}^n$ is mapped to $w_\gamma^mj_\gamma(f)
{w_\gamma^*}^n$ under the quotient map $\TT(E_\gamma)\to \OO(E_\gamma)$, we have
$\OO(E_\gamma)=\clsp\{w_\gamma^mj_\gamma(f) {w_\gamma^*}^n:m,n\in\N,\,f\in C(\SI)\}$.
\end{proof}

\section{An alternative description of the noncommutative solenoid}\label{sec:
alt descrip of NCS}

Throughout the rest of this paper we fix a natural number $N\ge 2$. In \cite{LatPack},
given a sequence $\theta = (\theta_n)^\infty_{n=1}$ in $\SI = \R/\Z$ such that
$N^2\theta_{n+1} = \theta_n$ for all $n$, Latr\'{e}moli\`{e}re and Packer define the
noncommutative solenoid $\AA_\theta^\mathscr{S}$ as a twisted group $C^*$-algebra
involving the $N$-adic rationals. In \cite[Theorem~3.7]{LatPack} they give an equivalent
characterisation of $\AA_\theta^\mathscr{S}$. We will take this characterisation as our
definition. We recall it now. Let
\[
\Xi_N:=\{(\theta_n)_{n=0}^\infty: \theta_n\in\SI\text{ and }N^2\theta_{n+1} = \theta_n\text{ for each $n$}\}.
\]
Recall that for $\gamma\in\SI$ the rotation algebra $\AA_\gamma$ is the universal
$C^*$-algebra generated by unitaries $U_\gamma$ and $V_\gamma$ satisfying $U_\gamma
V_\gamma=e^{2\pi i\gamma}V_\gamma U_\gamma$.

\begin{definition}\label{def: NCS}
Let $\theta=(\theta_n)_{n=0}^\infty\in\Xi_N$, and for each $n\in\N$ let
$\varphi_n:\AA_{\theta_n}\to\AA_{\theta_{n+1}}$ be the homomorphism satisfying
\[
\varphi_n(U_{\theta_n})=U_{\theta_{n+1}}^N\quad\text{and}\quad
\varphi_n(V_{\theta_n})=V_{\theta_{n+1}}^N.
\]
The {\em noncommutative solenoid} $\AA_\theta^\mathscr{S}$ is the direct limit
$\varinjlim (\AA_{\theta_n},\varphi_n)$.
\end{definition}

\begin{remark}\label{rem:changefromLP}
We have taken a slightly different point of view to \cite{LatPack} in describing
$\AA_\theta^\mathscr{S}$. In \cite{LatPack}, Latr\'emoli\`ere and Packer consider
collections of $(\theta_n)$ such that $N\theta_{n+1}-\theta_n\in\Z$, and take the direct
limit $\varinjlim \AA_{\theta_{2n}}$, with intertwining maps going from
$\AA_{\theta_{2n}}$ to $\AA_{\theta_{2n+2}}$.
\end{remark}

	We now give an alternative characterisation of the noncommutative solenoid using
topological graphs built from rotations of the circle as discussed in
Section~\ref{sec:rotations}.

\begin{notation}\label{notation:ioatandthecoveringmap}
We denote by $\iota:\SI\to \T$ the homeomorphism $t\mapsto e^{2\pi it}$,
and by $p_N:\SI\to\SI$ the function $t\mapsto Nt$.
\end{notation}

\begin{proposition}\label{prop: alt descrip of NCS}
Let $N\ge 2$, and $\theta=(\theta_n)_{n=0}^\infty\in\Xi_N$. For each $n\in\N$ there is an
injective homomorphism $\tau_n:\OO(E_{\theta_n})\to \OO(E_{\theta_{n+1}})$ satisfying
\[
\tau_n(j_{\theta_n}(f))=j_{\theta_{n+1}}(f\circ p_N)
\quad\text{and}\quad\tau_n(w_{\theta_n})=w_{\theta_{n+1}}^N,
\]
for all $f\in C(\SI)$. Moreover $\varinjlim (\OO(E_{\theta_n}),\tau_n) \cong
\AA_\theta^\mathscr{S}$.
\end{proposition}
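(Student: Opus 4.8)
The plan is to build each $\tau_n$ from the universal property of $\OO(E_{\theta_n})$ for covariant pairs established in Proposition~\ref{prop:TC&CPairs C*s}(2), to verify injectivity by a conditional-expectation argument, and finally to identify the direct limit using the canonical isomorphism $\OO(E_\gamma)\cong\AA_\gamma$.

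First I would exhibit a covariant pair for $E_{\theta_n}$ inside $\OO(E_{\theta_{n+1}})$. Set $\pi(f):=j_{\theta_{n+1}}(f\circ p_N)$ and $W:=w_{\theta_{n+1}}^N$; then $\pi$ is a homomorphism of $C(\SI)$ (composition with $p_N$ is a $*$-homomorphism) and $W$ is a unitary. The only substantive point is the covariance relation $W\pi(f)=\pi(f\circ R_{\theta_n})W$. Iterating $w_{\theta_{n+1}}j_{\theta_{n+1}}(g)=j_{\theta_{n+1}}(g\circ R_{\theta_{n+1}})w_{\theta_{n+1}}$ gives $w_{\theta_{n+1}}^N j_{\theta_{n+1}}(g)=j_{\theta_{n+1}}(g\circ R_{\theta_{n+1}}^N)w_{\theta_{n+1}}^N$, and since $R_\gamma(t)=t-\gamma$ we have $R_{\theta_{n+1}}^N=R_{N\theta_{n+1}}$. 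Taking $g=f\circ p_N$ and invoking the defining relation $N^2\theta_{n+1}=\theta_n$ in $\SI$ reduces the key identity to $(f\circ p_N)\circ R_{N\theta_{n+1}}=(f\circ R_{\theta_n})\circ p_N$, which is the one-line computation $f(N(t-N\theta_{n+1}))=f(Nt-\theta_n)$. Proposition~\ref{prop:TC&CPairs C*s}(2) then yields the homomorphism $\tau_n:=\pi\times W$ with the two stated formulas.

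Next I would prove $\tau_n$ injective; this is the step I expect to be the main obstacle, because the gauge degrees do not match and so the gauge-invariant uniqueness theorem does not apply verbatim. Writing $\eta^n$ for the gauge action on $\OO(E_{\theta_n})$, the formulas for $\tau_n$ give $\eta^{n+1}_z\circ\tau_n=\tau_n\circ\eta^n_{z^N}$, so naive equivariance fails. The remedy is to pass to the faithful conditional expectations $\Phi^n$ onto the fixed-point algebras obtained by averaging $\eta^n$: because $z\mapsto z^N$ pushes Haar measure on $\T$ forward to Haar measure, the twisted equivariance integrates to the genuine intertwining $\Phi^{n+1}\circ\tau_n=\tau_n\circ\Phi^n$. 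The fixed-point algebra of $\OO(E_{\theta_n})$ is exactly $j_{\theta_n}(C(\SI))$, since $w_{\theta_n}^m j_{\theta_n}(f)w_{\theta_n}^{*m}=j_{\theta_n}(f\circ R_{\theta_n}^m)$, and there $\tau_n$ acts as $f\mapsto f\circ p_N$, which is injective because $p_N$ is surjective and $j_{\theta_{n+1}}$ is faithful on $C(\SI)$. A standard argument now closes the gap: if $\tau_n(x)=0$ then $\tau_n(\Phi^n(x^*x))=\Phi^{n+1}(\tau_n(x^*x))=0$, whence $\Phi^n(x^*x)=0$ by injectivity on the fixed-point algebra, and faithfulness of $\Phi^n$ forces $x=0$. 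This argument treats rational and irrational $\theta_n$ uniformly, so no appeal to simplicity of $\AA_{\theta_n}$ is needed.

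Finally, for the direct-limit statement I would use the canonical isomorphism $\rho_\gamma:\AA_\gamma\to\OO(E_\gamma)$ determined by $\rho_\gamma(U_\gamma)=j_\gamma(\iota)$ and $\rho_\gamma(V_\gamma)=w_\gamma$, the pair $(j_\gamma(\iota),w_\gamma)$ realising the rotation relation since $\iota\circ R_\gamma=e^{-2\pi i\gamma}\iota$. It then remains to check $\tau_n\circ\rho_{\theta_n}=\rho_{\theta_{n+1}}\circ\varphi_n$; on generators this is immediate from $\iota\circ p_N=\iota^N$, giving $\tau_n(j_{\theta_n}(\iota))=j_{\theta_{n+1}}(\iota^N)=\rho_{\theta_{n+1}}(U_{\theta_{n+1}}^N)$ and $\tau_n(w_{\theta_n})=w_{\theta_{n+1}}^N=\rho_{\theta_{n+1}}(V_{\theta_{n+1}}^N)$, matching $\varphi_n$. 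Functoriality of $\varinjlim$ upgrades this commuting ladder of isomorphisms to an isomorphism $\varinjlim(\AA_{\theta_n},\varphi_n)\to\varinjlim(\OO(E_{\theta_n}),\tau_n)$, and since $\AA_\theta^{\mathscr{S}}=\varinjlim(\AA_{\theta_n},\varphi_n)$ by Definition~\ref{def: NCS}, this is the desired isomorphism.
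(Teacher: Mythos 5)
Your proposal is correct, but it takes a genuinely different route from the paper on the two substantive points: the construction of $\tau_n$ and its injectivity. The paper obtains $\tau_n$ by descending the Toeplitz-level embedding $\psi_n$ of Lemma~\ref{lem: map from gamma to eta for Toeplitz}: there, a Toeplitz pair $\big(f\mapsto i_{\theta_{n+1}}(f\circ p_N),\, s_{\theta_{n+1}}^N\big)$ gives $\psi_n:\TT(E_{\theta_n})\to\TT(E_{\theta_{n+1}})$, injectivity of $\psi_n$ is proved via the gauge-invariant uniqueness theorem (the computation showing $I_{(\pi,S)}=0$ together with a rescaled gauge action on the image), and $\psi_n$ descends because $\psi_n(1-s_{\theta_n}s_{\theta_n}^*)$ lies in the ideal generated by $1-s_{\theta_{n+1}}s_{\theta_{n+1}}^*$. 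You instead work entirely at the Cuntz--Pimsner level, invoking the universal property for covariant pairs from Proposition~\ref{prop:TC&CPairs C*s}(2); your key identity $(f\circ p_N)\circ R_{N\theta_{n+1}}=(f\circ R_{\theta_n})\circ p_N$ is exactly the relation $R_{\theta_n}\circ p_N=p_N\circ R_{\theta_{n+1}}^N$ used in the paper's lemma. Your injectivity argument --- twisted gauge equivariance integrated against Haar measure to intertwine the faithful conditional expectations onto the fixed-point algebras $j_{\theta_n}(C(\SI))$, where $\tau_n$ acts as the injection $f\mapsto f\circ p_N$ --- is sound (the fixed-point algebra is $\clsp\{w_{\theta_n}^m j_{\theta_n}(f)w_{\theta_n}^{*m}\}=j_{\theta_n}(C(\SI))$ since $w_{\theta_n}$ is unitary, and averaging a compact-group action gives a faithful expectation), and it treats rational and irrational $\theta_n$ uniformly with no appeal to simplicity. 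It is worth noting that the paper's lemma explicitly verifies only injectivity of $\psi_n$ and well-definedness of the descended map $\tau_n$; since injectivity does not automatically pass to quotients, your expectation argument supplies a complete proof of a point the paper leaves implicit (which could alternatively be handled by running the paper's rescaled-gauge-action trick at the $\OO$-level). The final step is essentially identical in both treatments: the isomorphisms $\AA_{\theta_n}\cong\OO(E_{\theta_n})$ carrying $U_{\theta_n}\mapsto j_{\theta_n}(\iota)$ and $V_{\theta_n}\mapsto w_{\theta_n}$ intertwine $\varphi_n$ with $\tau_n$ because $\iota\circ p_N=\iota^N$, and functoriality of direct limits finishes. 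What the paper's route buys is economy --- Lemma~\ref{lem: map from gamma to eta for Toeplitz} is needed anyway to define $\TT^{\mathscr{S}}_\theta$ in Section~\ref{sec: the TNCS}, so $\tau_n$ comes for free --- while your route buys a self-contained argument that never leaves the quotient algebras.
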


We will prove the existence of the injective homomorphisms $\tau_n$ using the following
result.

\begin{lemma}\label{lem: map from gamma to eta for Toeplitz}
Let $N\in\N$ with $N\ge 2$, and take $\gamma,\eta\in\SI$ with $N^2\eta-\gamma\in\Z$. Then
there is an injective homomorphism
$\psi:\TT(E_\gamma)\to \TT(E_\eta)$ satisfying
\[
    \psi(i_{\gamma}(f)) = i_{\eta}(f\circ p_N)
        \quad\text{and}\quad
    \psi(s_{\gamma})=s_{\eta}^N,
\]
for all $f\in C(\SI)$. The map $\psi$ descends to an injective homomorphism
$\tau:\OO(E_\gamma)\to \OO(E_\eta)$ satisfying
$\tau(j_{\gamma}(f))=j_{\eta}(f\circ p_N)$ and $\tau(w_{\gamma})=w_{\eta}^N$ for all
$f\in C(\SI)$.
\end{lemma}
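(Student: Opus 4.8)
The plan is to build $\psi$ from the universal property of $\TT(E_\gamma)$ in Proposition~\ref{prop:TC&CPairs C*s}(1), applied to a suitable Toeplitz pair in $\TT(E_\eta)$. I would set $\pi(f):=i_\eta(f\circ p_N)$ and $S:=s_\eta^N$. Then $\pi$ is a homomorphism since $f\mapsto f\circ p_N$ is a unital endomorphism of $C(\SI)$, and $S$ is an isometry because $s_\eta$ is. To verify that $(\pi,S)$ is a Toeplitz pair for $E_\gamma$ in the sense of Definition~\ref{def: TC&CPairs}, I would iterate the Toeplitz relation for $E_\eta$ to get $s_\eta^N i_\eta(g)=i_\eta(g\circ R_\eta^N)s_\eta^N$, and then observe that the hypothesis $N^2\eta-\gamma\in\Z$ is exactly what forces $p_N\circ R_\eta^N=R_\gamma\circ p_N$ on $\SI$ (both maps send $t$ to $Nt-N^2\eta\equiv Nt-\gamma\pmod 1$). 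Hence $S\pi(f)=i_\eta\big((f\circ p_N)\circ R_\eta^N\big)S=i_\eta\big((f\circ R_\gamma)\circ p_N\big)S=\pi(f\circ R_\gamma)S$, as required. The universal property then yields $\psi:=\pi\times S$ with $\psi\circ i_\gamma=\pi$ and $\psi(s_\gamma)=S$, which are the stated formulas.

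For injectivity of $\psi$ I would apply the gauge-invariant uniqueness theorem recorded in Remark~\ref{rems:Uniqueness thm for TT}(2), which requires a compatible gauge action on the image together with triviality of the ideal $I_{(\pi,S)}$. The gauge action is the subtle point: the standard gauge action $\delta$ of $\TT(E_\eta)$ scales $\psi(s_\gamma)=s_\eta^N$ by $z^N$ rather than $z$, so it does not intertwine directly. To repair this I would define $\beta_z:=\delta_w|_{C^*(\pi,S)}$ for any $w$ with $w^N=z$. This is well defined because two choices of $w$ differ by an $N$th root of unity $\omega$, and $\delta_\omega$ fixes each generator $i_\eta(f\circ p_N)$ of $C^*(\pi,S)$ and sends $s_\eta^N$ to $\omega^N s_\eta^N=s_\eta^N$, so $\delta_\omega$ restricts to the identity on $C^*(\pi,S)$. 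The resulting $\beta$ is a strongly continuous circle action on the image with $\beta_z(\pi(f))=\pi(f)$ and $\beta_z(S)=zS$, so $\psi$ intertwines the gauge action on $\TT(E_\gamma)$ with $\beta$.

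To show $I_{(\pi,S)}=0$, I would compress by the vacuum projection $Q_0:=1-s_\eta s_\eta^*$. A direct manipulation of the Toeplitz relations shows that $s_\eta s_\eta^*$ commutes with every $i_\eta(g)$, so $Q_0$ commutes with $i_\eta(C(\SI))$, and that $Q_0 s_\eta=0$, whence $Q_0 s_\eta^N=0$. Consequently $Q_0$ annihilates each generator $\pi(g)SS^*\pi(h)^*=i_\eta(g\circ p_N)\,s_\eta^N(s_\eta^*)^N\,i_\eta(h\circ p_N)^*$ of $(\pi,S)^{(1)}(\KK(X(E_\gamma)))$ on both sides, so $Q_0xQ_0=0$ for every $x$ in this ideal. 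On the other hand, in the faithful Fock representation of $\TT(E_\eta)$ the element $i_\eta(f\circ p_N)Q_0=Q_0\,i_\eta(f\circ p_N)\,Q_0$ acts on the vacuum copy of $C(\SI)$ as multiplication by $f\circ p_N$, so $f\mapsto i_\eta(f\circ p_N)Q_0$ is isometric. Thus $\pi(f)\in(\pi,S)^{(1)}(\KK(X(E_\gamma)))$ forces $f\circ p_N=0$, hence $f=0$ since $p_N$ is surjective. This gives $I_{(\pi,S)}=0$, and the uniqueness theorem then yields injectivity of $\psi$. I expect this ideal computation, together with pinning down the correct gauge action, to be the crux of the argument.

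Finally, for the descent I would compose $\psi$ with the quotient map $q_\eta:\TT(E_\eta)\to\OO(E_\eta)$ and check that $q_\eta\circ\psi$ kills the generator $s_\gamma s_\gamma^*-1$ of $\ker q_\gamma$: indeed $q_\eta\big(\psi(s_\gamma s_\gamma^*-1)\big)=w_\eta^N(w_\eta^*)^N-1=0$ because $w_\eta$ is unitary. Hence $q_\eta\circ\psi$ factors through a homomorphism $\tau:\OO(E_\gamma)\to\OO(E_\eta)$ with $\tau(j_\gamma(f))=j_\eta(f\circ p_N)$ and $\tau(w_\gamma)=w_\eta^N$. The pair $\big(j_\eta(\,\cdot\circ p_N),\,w_\eta^N\big)$ is a covariant pair for $E_\gamma$ whose first coordinate $f\mapsto j_\eta(f\circ p_N)$ is injective on $C(\SI)$, so injectivity of $\tau$ follows from the gauge-invariant uniqueness theorem for Cuntz--Pimsner algebras, using the analogue inside $\OO(E_\eta)$ of the averaging construction above to supply the required gauge action on the image.
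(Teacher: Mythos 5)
Your proposal is correct, and its skeleton matches the paper's proof: you realise $(\pi,S)=(i_\eta(\,\cdot\circ p_N),\,s_\eta^N)$ as a Toeplitz pair via the identity $p_N\circ R_\eta^N=R_\gamma\circ p_N$ (the paper writes it as $R_\gamma\circ p_N=p_N\circ R_\eta^N$), invoke the universal property of Proposition~\ref{prop:TC&CPairs C*s}, build a gauge action on the image by taking $N$th roots---well defined exactly as you say, because the ambient gauge automorphisms at $N$th roots of unity fix the image---and apply the gauge-invariant uniqueness theorem of Remarks~\ref{rems:Uniqueness thm for TT}. Where you genuinely diverge is the crux, the proof that $I_{(\pi,S)}=0$. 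The paper argues by approximation: for $f\in I_{(\pi,S)}$ it approximates $\pi(f)$ in norm by sums $\sum_i\pi(g_i)SS^*\pi(h_i)$ and compresses by $s_\eta^{*(N-1)}(\,\cdot\,)s_\eta^{N-1}$, using $s_\eta^{*(N-1)}i_\eta(g)s_\eta^{N-1}=i_\eta(g\circ R_{-\eta}^{N-1})$ to convert $s_\eta^Ns_\eta^{*N}$ into $s_\eta s_\eta^*$, concluding $f\circ p_N\circ R_{-\eta}^{N-1}\in I_{(i_\eta,s_\eta)}$, which vanishes by Remarks~\ref{rems:Uniqueness thm for TT}(\ref{it:kernel trivial}). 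Your vacuum compression is an exact, $\epsilon$-free version of the same idea: since $s_\eta s_\eta^*$ commutes with $i_\eta(C(\SI))$ and $Q_0s_\eta=0$, the projection $Q_0=1-s_\eta s_\eta^*$ annihilates the generators, so $\pi(f)\in(\pi,S)^{(1)}(\KK(X(E_\gamma)))$ forces $i_\eta(f\circ p_N)Q_0=0$. You finish via faithfulness of the Fock representation, which is legitimate (this is standard, see \cite{FowlerRaeburn}), but you could stay entirely within the paper's toolkit: $i_\eta(f\circ p_N)Q_0=0$ says $i_\eta(f\circ p_N)=i_\eta(f\circ p_N)s_\eta s_\eta^*\in(i_\eta,s_\eta)^{(1)}(\KK(X(E_\eta)))$, i.e.\ $f\circ p_N\in I_{(i_\eta,s_\eta)}=0$, and surjectivity of $p_N$ gives $f=0$. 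Your descent step is also more direct than the paper's: you check $q_\eta(\psi(s_\gamma s_\gamma^*-1))=w_\eta^Nw_\eta^{*N}-1=0$ using unitarity of $w_\eta$, whereas the paper exhibits $1-s_\eta^Ns_\eta^{*N}$ as a telescoping sum $\sum_{i=1}^N s_\eta^{N-i}(1-s_\eta s_\eta^*)s_\eta^{*(N-i)}$ in the ideal generated by $1-s_\eta s_\eta^*$; both establish the factorisation. Finally, you explicitly prove injectivity of $\tau$ (injectivity of $f\mapsto j_\eta(f\circ p_N)$, the averaged gauge action on the image, and the gauge-invariant uniqueness theorem for Cuntz--Pimsner algebras), a point the paper's displayed proof leaves implicit, since it only verifies that $\psi$ descends. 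In short: your route trades the paper's approximation argument for an exact algebraic identity and is self-contained on the $\OO$-level injectivity; the paper's route funnels everything through the single cited fact $I_{(i_\eta,s_\eta)}=0$.
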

\begin{proof}
Consider $\pi:C(\SI)\to \TT(E_\eta)$ given by $\pi(f)=i_\eta(f\circ p_N)$ and let
$S:=s_\eta^N$. Since
\[
R_\gamma\circ p_N
    = R_{N^2\eta}\circ p_N
    = R_\eta^{N^2}\circ p_N=p_N\circ R_\eta^N,
\]
we have
\[
\pi(f\circ R_\gamma)S=i_\eta(f\circ R_\gamma\circ p_N)s_\eta^N
    = i_\eta(f\circ p_N\circ R_\eta^N)s_\eta^N
    = s_\eta^Ni_\eta(f\circ p_N)
    = S\pi(f).
\]
So $(\pi,S)$ is a Toeplitz pair for $E_\gamma$. The universal property of $\TT(E_\gamma)$
now gives a homomorphism $\psi:\TT(E_\gamma)\to \TT(E_\eta)$ satisfying
$\psi_n(i_{\gamma}(f))=i_{\eta}(f\circ p_N)$ for all $f\in C(\SI)$, and
$\psi(s_{\gamma})=s_{\eta}^N$.

To see that $\psi$ is injective, we aim to apply the gauge-invariant uniqueness theorem
discussed in Remarks~\ref{rems:Uniqueness thm for TT}. We claim that
$I_{(\pi,S)}\not=0\implies I_{(i_\eta,s_\eta)}\not=0$. To see this, suppose that $0\not=
f\in I_{(\pi,S)}$. Fix $\epsilon>0$, and choose $g_i,h_i\in C(\SI)$ with
\[
\Big\|\pi(f) - \sum_{i=1}^k\pi(g_i)SS^*\pi(h_i)\Big\|<\epsilon.
\]
So
\[
\Big\|i_\eta(f\circ p_N) - \sum_{i=1}^ki_\eta(g_i\circ p_N)s_\eta^N{s_\eta^*}^Ni_\eta(h_i\circ p_N)\Big\|<\epsilon.
\]
For every function $g\in C(\SI)$ we have
\[
i_\eta(g\circ R_{-\eta}^{N-1})={s_\eta^*}^{N-1}s_\eta^{N-1}i_\eta(g\circ R_{-\eta}^{N-1})={s_\eta^*}^{N-1}i_\eta(g)s_\eta^{N-1}.
\]
Hence
\begin{align*}
\Big\|i_\eta(f\circ p_N\circ R_{-\eta}^{N-1})&{} - \sum_{i=1}^ki_\eta(g_i\circ p_N\circ R_{-\eta}^{N-1})s_\eta s_\eta^*i_\eta(h_i\circ p_N\circ R_{-\eta}^{N-1})\Big\|\\
    & = \Big\|{s_\eta^*}^{N-1}i_\eta(f\circ p_N){s_\eta}^{N-1} - \sum_{i=1}^k{s_\eta^*}^{N-1}i_\eta(g_i\circ p_N)s_\eta^N {s_\eta^*}^Ni_\eta(h_i\circ p_N){s_\eta}^{N-1}\Big\|\\
    & \le \Big\|i_\eta(f\circ p_N) - \sum_{i=1}^ki_\eta(g_i\circ p_N)s_\eta^N {s_\eta^*}^Ni_\eta(h_i\circ p_N)\Big\|
    < \epsilon.
\end{align*}
It follows that $i_\eta(f\circ p_N\circ R_{-\eta}^{N-1})\in
(i_\eta,s_\eta)^{(1)}(\KK(X(E_\eta)))$, and hence that $f\circ p_N\circ
R_{-\eta}^{N-1}\in I_{(i_\eta,s_\eta)}$. This proves the claim.

By Remarks~\ref{rems:Uniqueness thm for TT}(\ref{it:kernel trivial}),
$I_{(i_\eta,s_\eta)}=0$, so the claim gives $I_{(\pi,S)}=0$. We have $\psi(\TT(E_\gamma))
\subseteq \clsp\{s_\eta^{aN} i_\eta(f) s_\eta^{*bN} : f \in C(\SI), a,b \in \N\}$. Hence
the gauge action $\rho^\eta$ of $\T$ on $\TT(E_\gamma)$ satisfies $\rho^\eta_z \circ \psi
= \rho^\eta_{z + e^{2\pi i/N}} \circ \psi$ for all $z \in \T$. So there is an action
$\tilde\rho^\eta$ of $\T$ on $\psi(\TT(E_\gamma))$ such that $\tilde\rho^\eta_{e^{2\pi i
t}} \circ \psi = \rho{e^{e\pi i t/N}}$ for all $t \in \R$. In particular,
\begin{align*}
\tilde\rho^\eta_{e^{2\pi i t}} \circ \psi(s_\gamma^{a} i_\eta(f) s_\gamma^{*b})
    &= \rho^\eta_{e^{2\pi i t/N}}(s_\eta^{aN} i_\eta(f) s_\eta^{*bN})\\
    &= e^{2\pi i t(a-b)} s_\eta^{aN} i_\eta(f) s_\eta^{*bN}
    = \psi \circ \rho^\gamma_{e^{2\pi i t}}(s_\eta^{aN} i_\eta(f) s_\eta^{*bN}).
\end{align*}
So continuity and linearity gives $\tilde\rho^\eta_{e^{2\pi i t}} = \psi \circ
\rho^\gamma_{e^{2\pi i t}}$. Hence the gauge-invariant uniqueness theorem
\cite[Theorem~6.2]{Kat} shows that $\psi$ is injective.

To see that $\psi$ descends to the desired injective homomorphism $\tau:\OO(E_\gamma)\to
\OO(E_\eta)$, it suffices to show that the image under $\psi$ of the kernel of the
quotient map $\TT(E_\gamma)\to \OO(E_\gamma)$ is contained in the kernel of
$\TT(E_\eta)\to \OO(E_\eta)$. For this, it suffices to show that $\psi(1-s_\gamma
s_\gamma^*)$ is in the ideal generated by $1-s_\eta s_\eta^*$, which it is because
\[
\psi(1-s_\gamma s_\gamma^*)=1-s_\eta^N {s_\eta^*}^N = \sum_{i=1}^N s_\eta^{N-i}(1-s_\eta s_\eta){s_\eta^*}^{N-i}.\qedhere
\]
\end{proof}

\begin{proof}[Proof of Proposition~\ref{prop: alt descrip of NCS}]
For each $n\in\N$, Lemma~\ref{lem: map from gamma to eta for Toeplitz} applied to
$\gamma=\theta_n$ and $\eta=\theta_{n+1}$ gives the desired injective homomorphism
$\tau_n$.

Proposition~\ref{prop:TC&CPairs C*s} says that each $\OO(E_\eta)$ is the crossed product
$C(\SI)\rtimes \Z$ for the automorphism $f\mapsto f\circ R_\eta$ of $C(\SI)$, which is
the rotation algebra $\AA_\eta$ (see \cite[Example VIII.1.1]{Davidson} for details). So
for each $n\in\N$ there is an isomorphism from $\OO(E_{\theta_n})$ to $\AA_{\theta_n}$
carrying $j_{\theta_n}(\iota)$ to $U_{\theta_n}$ and $w_{\theta_n}$ to $V_{\theta_n}$.
Since each $\tau_n$ satisfies
\[
\tau_n(j_{\theta_n}(\iota))=j_{\theta_{n+1}}(\iota\circ p_N)
=j_{\theta_{n+1}}(\iota)^N\quad\text{and}\quad
\tau_n(w_{\theta_n})=w_{\theta_{n+1}}^N,
\]
the diagrams
\[
\begin{tikzpicture}[scale=1.5]
\node (A) at (0,0) {$\OO(E_{\theta_n})$};
\node (B) at (2,0) {$\OO(E_{\theta_{n+1}})$};
\node (C) at (0,-1) {$\AA_{\theta_n}$};
\node (D) at (2,-1) {$\AA_{\theta_{n+1}}$};
\path[->,font=\scriptsize]
(A) edge node[above]{$\tau_n$} (B)
(A) edge node[left]{$\cong$} (C)
(B) edge node[right]{$\cong$} (D)
(C) edge node[above]{$\varphi_n$} (D);
\end{tikzpicture}
\]
commute. Hence $\varinjlim (\OO(E_{\theta_n}),\tau_n) \cong \AA_\theta^\mathscr{S}$.
\end{proof}

\begin{remark}\label{rem: maps not from graph morphisms}
In \cite[Section~2]{Katsura:IJM06}, Katsura studies factor maps between topological-graph
$C^*$-algebras, and the $C^*$-homomorphisms that they induce. He shows that the
projective limit of a sequence $(E_n)$ of topological graphs under factor maps is itself
a topological graph. He then proves that the $C^*$-algebra $\OO(\varprojlim E_n)$ of this
topological graph is isomorphic to the direct limit $\varinjlim \OO(E_n)$ of the
$C^*$-algebras of the $E_n$ under the homomorphisms induced by the factor maps. So it is
natural to ask whether the maps $\tau_n : \OO(E_{\theta_n}) \to \OO(E_{\theta_{n+1}})$
correspond to factor maps. This is not the case: as observed on page~88 of
\cite{HawkinsPhD}, there is no factor map from $E_{\theta_{n+1}} \to E_{\theta_n}$ that
induces the homomorphism of $C^*$-algebras described in Lemma~\ref{lem: map from gamma to
eta for Toeplitz}.
\end{remark}

\section{The Toeplitz noncommutative solenoid and its KMS structure}\label{sec: the
TNCS}

In this section we introduce our Toeplitz noncommutative solenoids
$\TT^\mathscr{S}_\theta$. We introduce a natural dynamics on $\TT^\mathscr{S}_\theta$ and
apply Proposition~\ref{prop:KMSanddirectlimits} to begin to study its KMS structure.

Given $\theta=(\theta_n)_{n=0}^\infty\in\Xi_N$, Lemma~\ref{lem: map from gamma to eta for
Toeplitz} gives a sequence of injective homomorphisms $\psi_n:\TT(E_{\theta_n})\to
\TT(E_{\theta_{n+1}})$ satisfying
\[
\psi_n(i_{\theta_n}(f))=i_{\theta_{n+1}}(f\circ p_N)
\quad\text{and}\quad\psi_n(s_{\theta_n})=s_{\theta_{n+1}}^N,
\]
for all $f\in C(\SI)$.

\begin{definition}\label{def: the TNCS}
We define $\TT_\theta^\mathscr{S}:=\varinjlim (\TT(E_{\theta_n}),\psi_n)$ and call it the
{\em Toeplitz noncommutative solenoid}. We write
$\psi_{n,\infty}:\TT(E_{\theta_n})\to\TT_\theta^\mathscr{S}$ for the canonical
inclusions, so that $\psi_{n,\infty}=\psi_{n+1,\infty}\circ \psi_n$ for all $n$.
\end{definition}

The following lemma indicates why it is sensible to regard $\TT^{\mathscr{S}}_\theta$ as
a natural Toeplitz extension of the noncommutative solenoid.

\begin{lemma}\label{lem:kernel}
In the notation established in Proposition~\ref{prop: alt descrip of NCS}, there is a
surjective homomorphism $q : \TT_\theta^\mathscr{S} \to \AA^{\mathscr{S}}_\theta$ such
that $q(\psi_{n,\infty}(i_{\theta_n}(f))) = \tau_{n,\infty}(j_{\theta_n}(f))$ and
$q(\psi_{n,\infty}(s_{\theta_n})) = \tau_{n,\infty}(w_{\theta_n})$ for all $n \in \N$ and
all $f \in C(\SI)$. Moreover, $\ker(q)$ is generated as an ideal by
$\psi_{1,\infty}(i_{\theta_1}(1) - s_{\theta_1} s^*_{\theta_1})$.
\end{lemma}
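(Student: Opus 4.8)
The plan is to construct $q$ as a direct limit of the quotient maps $q_n : \TT(E_{\theta_n}) \to \OO(E_{\theta_n})$ and then identify its kernel. First I would observe that each $\TT(E_{\theta_n})$ carries the quotient map $q_n$ onto $\OO(E_{\theta_n})$ described in the preliminaries, and that by the final displayed computation in the proof of Lemma~\ref{lem: map from gamma to eta for Toeplitz} the maps $\psi_n$ carry $\ker q_n$ into $\ker q_{n+1}$, so that $\tau_n \circ q_n = q_{n+1} \circ \psi_n$. Composing with the canonical inclusions $\tau_{n,\infty} : \OO(E_{\theta_n}) \to \varinjlim(\OO(E_{\theta_n}),\tau_n) \cong \AA^\mathscr{S}_\theta$ gives compatible maps $\tau_{n,\infty} \circ q_n : \TT(E_{\theta_n}) \to \AA^\mathscr{S}_\theta$, and the universal property of the direct limit $\TT^\mathscr{S}_\theta$ then yields a homomorphism $q$ with $q \circ \psi_{n,\infty} = \tau_{n,\infty} \circ q_n$. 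Evaluating on generators gives exactly the two stated formulas, and surjectivity follows because the union of the ranges of the $\tau_{n,\infty}$ is dense in $\AA^\mathscr{S}_\theta$.

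For the kernel, I would first show that $\psi_{1,\infty}(i_{\theta_1}(1) - s_{\theta_1}s^*_{\theta_1})$ lies in $\ker q$, which is immediate since $q(\psi_{1,\infty}(s_{\theta_1}s^*_{\theta_1})) = \tau_{1,\infty}(w_{\theta_1}w^*_{\theta_1}) = \tau_{1,\infty}(1) = q(\psi_{1,\infty}(i_{\theta_1}(1)))$, using that $w_{\theta_1}$ is a unitary. Let $J$ denote the ideal it generates; then $J \subseteq \ker q$, and I want the reverse inclusion. The natural strategy is to show that $q$ factors through an isomorphism $\TT^\mathscr{S}_\theta / J \cong \AA^\mathscr{S}_\theta$. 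Here the key reduction is that for each $n$ the element $\psi_{n,\infty}(s_{\theta_n}s^*_{\theta_n} - i_{\theta_n}(1))$ already lies in $J$: indeed $s_{\theta_1}s^*_{\theta_1} - 1 = \psi_{1}^{-1}$-preimages aside, the connecting-map formula $\psi_n(s_{\theta_n}) = s_{\theta_{n+1}}^N$ together with the telescoping identity $1 - s_{\theta_{n+1}}^N {s^*_{\theta_{n+1}}}^N = \sum_{i=1}^N s_{\theta_{n+1}}^{N-i}(1 - s_{\theta_{n+1}}s^*_{\theta_{n+1}}){s^*_{\theta_{n+1}}}^{N-i}$ (appearing at the end of the proof of Lemma~\ref{lem: map from gamma to eta for Toeplitz}) expresses each such element as a sum of conjugates of the single generator, after pushing everything forward through the inclusions. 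So $J$ contains $\psi_{n,\infty}(\ker q_n)$ for every $n$.

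Granting that, the quotient $\TT^\mathscr{S}_\theta / J$ is a direct limit of the algebras $\TT(E_{\theta_n})/\psi_{n,\infty}^{-1}(J) = \OO(E_{\theta_n})$ under the induced connecting maps, which are exactly the $\tau_n$; hence $\TT^\mathscr{S}_\theta / J \cong \varinjlim(\OO(E_{\theta_n}),\tau_n) \cong \AA^\mathscr{S}_\theta$, and under this identification the induced map agrees with $q$ on generators, forcing $\ker q = J$. The main obstacle I anticipate is the careful bookkeeping in the kernel computation: one must verify that $J$ contains every $\psi_{n,\infty}(\ker q_n)$ and not merely the single level-one generator, which requires combining the telescoping identity above with the compatibility $\psi_{n,\infty} = \psi_{n+1,\infty}\circ\psi_n$ to rewrite higher-level projection defects in terms of the fixed generator; and one must then justify the direct-limit-of-quotients identification, using that $\ker q_n$ is generated by $1 - s_{\theta_n}s^*_{\theta_n}$ so that the induced ideal in the limit is precisely $J$. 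Once these are in hand, the isomorphism $\TT^\mathscr{S}_\theta/J \cong \AA^\mathscr{S}_\theta$ and hence $\ker q = J$ follow formally.
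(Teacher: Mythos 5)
Your construction of $q$ and the containment $J \subseteq \ker(q)$ are correct and match the paper, as does your overall strategy of reducing to the claim that $\psi_{n,\infty}(i_{\theta_n}(1) - s_{\theta_n}s^*_{\theta_n}) \in J$ for every $n$. But your proof of that claim has a genuine gap: the telescoping identity runs in the \emph{opposite} direction to the one you need. Writing $p_n := i_{\theta_n}(1) - s_{\theta_n}s^*_{\theta_n}$, the identity
\[
1-s_{\theta_{n+1}}^N {s_{\theta_{n+1}}^*}^N \;=\; \sum_{i=1}^N s_{\theta_{n+1}}^{N-i}\,\bigl(1-s_{\theta_{n+1}} s_{\theta_{n+1}}^*\bigr)\,{s_{\theta_{n+1}}^*}^{N-i}
\]
expresses $\psi_n(p_n)$ as a sum of conjugates of $p_{n+1}$; iterating and pushing into the limit shows that $\psi_{1,\infty}(p_1)$ lies in the ideal $J_n$ generated by $\psi_{n,\infty}(p_n)$, i.e.\ $J \subseteq J_n$. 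What you need is the reverse containment $J_n \subseteq J$, and no amount of rewriting via this identity produces it: it can only express \emph{lower}-level defects in terms of \emph{higher}-level ones, so your "key reduction" is unproved as stated.

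The repair is short, and it is exactly how the paper argues. Since $s^k s^{*k} = s(s^{k-1}s^{*(k-1)})s^* \le ss^*$ for any isometry $s$ and $k \ge 1$, applying $\psi_{1,n}$ (which sends $s_{\theta_1}$ to $s_{\theta_n}^{N^{n-1}}$) gives
\[
0 \;\le\; p_n \;\le\; i_{\theta_n}(1) - s_{\theta_n}^{N^{n-1}} {s_{\theta_n}^*}^{N^{n-1}} \;=\; \psi_{1,n}(p_1),
\]
so $0 \le \psi_{n,\infty}(p_n) \le \psi_{1,\infty}(p_1) \in J$, and hereditariness of closed two-sided ideals yields $\psi_{n,\infty}(p_n) \in J$. (Equivalently, since $p_n$ and $\psi_{1,n}(p_1)$ are commuting projections with $p_n \le \psi_{1,n}(p_1)$, one has $p_n = p_n\,\psi_{1,n}(p_1)$, exhibiting membership in the ideal directly.) Once this is in place, the rest of your bookkeeping is sound: $\psi_{n,\infty}^{-1}(J) = \ker(q_n)$ follows because $J \subseteq \ker(q)$, $\tau_{n,\infty}$ is injective (each $\tau_n$ is, by Lemma~\ref{lem: map from gamma to eta for Toeplitz}), and $\ker(q_n)$ is generated by $p_n$ by Proposition~\ref{prop:TC&CPairs C*s}; your identification of $\TT_\theta^{\mathscr{S}}/J$ with $\varinjlim(\OO(E_{\theta_n}),\tau_n)$ is then a standard direct-limit argument, equivalent to the paper's route via $\ker(q) = \overline{\bigcup_n \psi_{n,\infty}(\ker(q_n))}$.
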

\begin{proof}
For the first statement observe that the canonical homomorphisms $q_n : \TT(E_{\theta_n})
\to \OO(E_{\theta_n})$ intertwine the $\psi_n$ with the $\tau_n$. For the second
statement, let $I$ be the ideal of $\TT^{\mathscr{S}}_\theta$ generated by
$\psi_{1,\infty}(i_{\theta_1}(1) - s_{\theta_1} s^*_{\theta_1})$. Since $\ker(q)$ clearly
contains $\psi_{1,\infty}(i_{\theta_1}(1) - s_{\theta_1} s^*_{\theta_1})$, we have $I
\subseteq \ker(q)$. For the reverse inclusion, note that for $n \ge 1$,
\begin{align*}
\psi_{1,n}(i_{\theta_1}(1) - s_{\theta_1} s^*_{\theta_1})
    &= i_{\theta_n}(1 \circ \iota_{N^n}) - s_{\theta_n}^{nN} s_{\theta_n}^{*nN}\\
    &= i_{\theta_n}(1) - s_{\theta_n} (s_{\theta_n}^{nN-1} s_{\theta_n}^{*(nN-1)}) s_{\theta_n}^*
    \ge i_{\theta_n}(1) - s_{\theta_n} s_{\theta_n}^*,
\end{align*}
so each $\psi_{n,\infty}(i_{\theta_n}(1) - s_{\theta_n} s_{\theta_n}^*) \le
\psi_{n,\infty}(\psi_{1,n}(i_{\theta_1}(1) - s_{\theta_1} s^*_{\theta_1})) =
\psi_{1,\infty}(i_{\theta_1}(1) - s_{\theta_1} s^*_{\theta_1})$, which belongs to $I$.
Thus $\psi_{n,\infty}(i_{\theta_n}(1) - s_{\theta_n} s_{\theta_n}^*) \in I$. Since
$\ker(q) = \overline{\bigcup_n \ker(q) \cap \psi_{n,\infty}(\TT(E_{\theta_n}))} =
\overline{\bigcup_n \psi_{n,\infty}(\ker(q_n))}$, it therefore suffices to show that each
$\ker(q_n)$ is generated by $i_{\theta_n}(1) - s_{\theta_n} s_{\theta_n}^*$, which
follows from Proposition~\ref{prop:TC&CPairs C*s}.
\end{proof}

\begin{proposition}\label{prop:dynamicsonTT}
There is a strongly continuous action $\alpha:\R\to \Aut \TT_\theta^\mathscr{S}$
satisfying
\begin{equation}\label{eq:actiononspanningfamily}
\alpha_t(\psi_{j,\infty}(s_{\theta_{j}}^mi_{\theta_{j}}(f)s_{\theta_{j}}^{*n}))
    = e^{it(m-n)/N^{j}} \psi_{j,\infty}(s_{\theta_{j}}^mi_{\theta_{j}}(f)s_{\theta_{j}}^{*n}),
\end{equation}
for each $j,m,n\in \N$ and $f\in C(\SI)$. This $\alpha$ descends to a strongly continuous
action, also written $\alpha$, on the noncommutative solenoid $\AA^{\mathscr{S}}_\theta$.
\end{proposition}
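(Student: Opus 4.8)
The plan is to obtain $\alpha$ as the direct-limit action supplied by Proposition~\ref{prop:KMSanddirectlimits}, built from a compatible family of periodic actions on the approximating algebras $\TT(E_{\theta_j})$. Each $\TT(E_{\theta_j})$ carries its gauge action $\rho^{\theta_j}:\T\to\Aut\TT(E_{\theta_j})$, which fixes every $i_{\theta_j}(f)$ and sends $s_{\theta_j}$ to $z\,s_{\theta_j}$, so that $\rho^{\theta_j}_z(s_{\theta_j}^m i_{\theta_j}(f) s_{\theta_j}^{*n}) = z^{m-n}\, s_{\theta_j}^m i_{\theta_j}(f) s_{\theta_j}^{*n}$. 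First I would set $\alpha_{j,t} := \rho^{\theta_j}_{e^{it/N^j}}$. As the composition of the strongly continuous gauge action with the continuous homomorphism $t\mapsto e^{it/N^j}$ of $\R$ into $\T$, this is a strongly continuous $\R$-action, and it satisfies $\alpha_{j,t}(s_{\theta_j}^m i_{\theta_j}(f) s_{\theta_j}^{*n}) = e^{it(m-n)/N^j}\, s_{\theta_j}^m i_{\theta_j}(f) s_{\theta_j}^{*n}$.

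The crux is the intertwining identity $\alpha_{j+1,t}\circ\psi_j = \psi_j\circ\alpha_{j,t}$. Since $\psi_j(s_{\theta_j}^m i_{\theta_j}(f) s_{\theta_j}^{*n}) = s_{\theta_{j+1}}^{mN} i_{\theta_{j+1}}(f\circ p_N) s_{\theta_{j+1}}^{*nN}$, applying $\alpha_{j+1,t}$ scales this by $e^{it(mN-nN)/N^{j+1}} = e^{it(m-n)/N^j}$, which is exactly the scalar transported through $\psi_j$ by $\alpha_{j,t}$; so the identity holds on the spanning set, and hence everywhere. The denominator $N^j$ is chosen precisely so that the factor of $N$ that $\psi_j$ introduces when it multiplies the $s$-degree by $N$ is cancelled by the passage from $N^j$ to $N^{j+1}$. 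Each $\TT(E_{\theta_j})$ is unital with unit $i_{\theta_j}(1)$, and each $\psi_j$ is an injective unital homomorphism by Lemma~\ref{lem: map from gamma to eta for Toeplitz}, so the family $\{(\TT(E_{\theta_j}),\psi_j,\alpha_{j,\cdot})\}$ meets the hypotheses of Proposition~\ref{prop:KMSanddirectlimits}. Its first assertion then yields a strongly continuous action $\alpha$ on $\TT_\theta^\mathscr{S}$ with $\alpha_t\circ\psi_{j,\infty} = \psi_{j,\infty}\circ\alpha_{j,t}$; evaluating on spanning elements gives \eqref{eq:actiononspanningfamily}.

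To descend $\alpha$ to $\AA_\theta^\mathscr{S}$, I would invoke Lemma~\ref{lem:kernel}, which identifies $\ker q$ as the ideal generated by $\psi_{1,\infty}(i_{\theta_1}(1) - s_{\theta_1} s_{\theta_1}^*)$. Writing $i_{\theta_1}(1) = s_{\theta_1}^0 i_{\theta_1}(1) s_{\theta_1}^{*0}$ and $s_{\theta_1} s_{\theta_1}^* = s_{\theta_1}^1 i_{\theta_1}(1) s_{\theta_1}^{*1}$, both summands are spanning elements of $s$-degree $m-n=0$, so by \eqref{eq:actiononspanningfamily} each is fixed by every $\alpha_t$; hence $\alpha_t$ fixes the generator of $\ker q$. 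As $\alpha_t$ is an automorphism it therefore carries $\ker q$ onto itself, so $\alpha_t$ passes to an automorphism of $\AA_\theta^\mathscr{S} = \TT_\theta^\mathscr{S}/\ker q$, and these assemble into a strongly continuous $\R$-action, the continuity descending from that of $\alpha$ through the quotient map $q$. The only genuine obstacle is locating the correct normalisation $1/N^j$; with that fixed, the intertwining identity is routine bookkeeping, and the descent is automatic because the Toeplitz defect $i_{\theta_1}(1) - s_{\theta_1} s_{\theta_1}^*$ sits in the gauge-fixed, degree-zero part of the algebra and so is left invariant by the dynamics.
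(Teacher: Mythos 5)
Your proposal is correct and follows essentially the same route as the paper: you rescale the gauge action on each $\TT(E_{\theta_j})$ to $\alpha_{j,t}=\rho_{e^{it/N^j}}$, verify the intertwining $\alpha_{j+1,t}\circ\psi_j=\psi_j\circ\alpha_{j,t}$ on the spanning elements via the same cancellation $e^{it(Nm-Nn)/N^{j+1}}=e^{it(m-n)/N^j}$, and invoke Proposition~\ref{prop:KMSanddirectlimits} to assemble the direct-limit action. Your descent argument is also the paper's: the Toeplitz defect $\psi_{1,\infty}(i_{\theta_1}(1)-s_{\theta_1}s_{\theta_1}^*)$ has gauge degree zero, so it is fixed by every $\alpha_t$, the ideal it generates (which is $\ker q$ by Lemma~\ref{lem:kernel}) is invariant, and $\alpha$ passes to $\AA^{\mathscr{S}}_\theta$.
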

\begin{proof}
For each $j\in\N$ we denote by $\rho$ the gauge action on $\TT(E_{\theta_j})$, and by
$\rho_j$ the strongly continuous action $t\mapsto \rho_{e^{it/N^j}}$ of $\R$ on
$\TT(E_{\theta_j})$; so $\rho_{j,t}\circ i_{\theta_j}=i_{\theta_j}$ and
$\rho_{j,t}(s_{\theta_j})=e^{it/N^j}s_{\theta_j}$ for each $t\in\R$. For each $j\in\N$
and $t\in\R$ we have
\begin{align*}
\rho_{j+1,t}\circ \psi_j(s_{\theta_j}^m i_{\theta_j}(f)s_{\theta_j}^{*n})
    &= e^{it(Nm-Nn)/N^{j+1}}s_{\theta_{j+1}}^{Nm} i_{\theta_{j+1}}(f)s_{\theta_{j+1}}^{*Nn}\\
    &= e^{it(m-n)/N^j}s_{\theta_{j+1}}^{Nm} i_{\theta_{j+1}}(f)s_{\theta_{j+1}}^{*Nn}
	= \psi_j\circ\rho_{j,t}(s_{\theta_j}^m i_{\theta_j}(f)s_{\theta_j}^{*n}).
\end{align*}
Hence $\rho_{j+1,t}\circ \psi_j=\psi_j\circ\rho_{j,t}$, and
Proposition~\ref{prop:KMSanddirectlimits} applied to each
$(A_j,\alpha_j)=(\TT(E_{\theta_j}),\rho_j)$ gives the desired action $\alpha : \R\to \Aut
\TT_\theta^\mathscr{S}$.

For the final statement, observe that the $\alpha_{t}$ all fix
$\psi_{1,\infty}(i_{\theta_1}(1) - s_{\theta_1} s^*_{\theta_1})$, and so leave the ideal
that it generates invariant; so they descend to $\AA^{\mathscr{S}}_\theta$ by
Lemma~\ref{lem:kernel}.
\end{proof}

\begin{remark}\label{rem:realactionofR}
The actions on graph $C^*$-algebras and their analogues studied in, for example,
\cite{ChristensenThomsen:JMAA2016, EnomotoFujiiEtAl:MJ84, AaHR, aHLRS} are lifts of
circle actions, and so are periodic in the sense that $\alpha_t = \alpha_{t + 2\pi}$ for
all $t$. By contrast, while the action $\alpha$ of the preceding proposition restricts to
a periodic action on each approximating subalgebra $\psi_{j,\infty}(\TT(E_{\theta_j}))$,
it is itself not periodic: $\alpha_t = \alpha_s \implies t = s$.
\end{remark}

We now wish to study the KMS structure of the Toeplitz noncommutative solenoid
$\TT_\theta^\mathscr{S}$ under the dynamics $\alpha$ of
Proposition~\ref{prop:dynamicsonTT}.

\begin{remark}\label{rmk:theta=0}
The case $\theta = \mathbf{0} = (0, 0, \dots)$ is relatively easy to analyse. Let
$\mathscr{S} = \varprojlim(\SI, p_N)$ denote the classical solenoid, and $\TT$ the Toeplitz algebra. Write $s$ for the isometry generating $\TT$, and
$\kappa : \TT \to \TT$ for the homomorphism given by $\kappa(s) = s^N$. Then
$\TT_{\mathbf{0}}^\mathscr{S} \cong C(\mathscr{S}) \otimes \varinjlim(\TT, \kappa)$. This
isomorphism intertwines the quotient map $q : \TT_{\mathbf{0}}^\mathscr{S} \to
\AA_{\mathbf{0}}^\mathscr{S}$ with the canonical quotient map $\id \otimes \tilde{q} :
C(\mathscr{S}) \otimes \varinjlim(\TT, \kappa) \to C(\mathscr{S}) \otimes
C(\mathscr{S})$. It also intertwines $\alpha$ with $1 \otimes \tilde\alpha$ where
$\tilde\alpha_t(\kappa_{j,\infty}(s)) = e^{it/N^j} \kappa_{j,\infty}(s)$. That is,
$\tilde\alpha$ is equivariant over $\kappa_{j,\infty}$ with an action $\tilde\alpha_j$ on
$\TT$ that is a rescaling of the gauge dynamics studied in \cite{aHLRS}. Theorems
3.1~and~4.3 of \cite{aHLRS} imply that $(\TT, \tilde\alpha_j)$ has a unique $\KMS_\beta$
state for every $\beta \ge 0$ and has no $\KMS_\beta$ states for $\beta < 0$, and that
the $\KMS_0$ state is the only one that factors through $C(\SI)$. So
Proposition~\ref{prop:KMSanddirectlimits} implies that $(\varinjlim(\TT, \kappa),
\tilde\alpha)$ has a unique $\KMS_\beta$ state $\phi_\beta$ for each $\beta \ge 0$ and
has no $\KMS_\beta$ states for $\beta < 0$, and that the $\KMS_0$ state is the only one
that factors through $C(\mathscr{S})$. Hence the map $\psi \mapsto \psi \otimes
\phi_\beta$ determines an affine isomorphism of the state space of $C(\mathscr{S})$ onto
$\KMS_\beta(\TT^{\mathscr{S}}_{\mathbf{0}}, \alpha)$ for each $\beta \ge 0$, there are no
$\KMS_\beta$ states for $\beta < 0$, and the $\KMS_0$ states are the only ones that
factor through $\AA^\mathscr{S}_{\mathbf{0}}$.
\end{remark}

In light of Remark~\ref{rmk:theta=0}, we will from now on consider only those $\theta \in
\Xi_N$ such that $\theta_j \not= 0$ for some $j$. Since $\theta_j \not= 0$ implies
$\theta_{j+1} \not= 0$, and since $\varinjlim((\AA_{\theta_n}, \varphi_n)^\infty_{n=1}) =
\varinjlim((\AA_{\theta_n}, \varphi_n)^\infty_{n=j})$ for any $j$, we may therefore
assume henceforth that $\theta_j \not= 0$ for all $j$.

Our main result is the following.

\begin{thm}\label{thm:main}
Take $N\in\{2, 3, \dots\}$, take $\theta=(\theta_j)_{j=0}^\infty\in\Xi_N$, and take
$\beta\in (0,\infty)$. Suppose that $\theta_j \not= 0$ for all $j$. Then
$\KMS_\beta(\TT_\theta^\mathscr{S},\alpha)$ is isomorphic to the Choquet simplex of Borel
probability measures on the solenoid $\mathscr{S} := \varprojlim(\SI, p_N)$, and there is
an action $\lambda$ of $\mathscr{S}$ on $\TT_\theta^\mathscr{S}$ that induces a free and
transitive action of $\mathscr{S}$ on the extreme boundary of
$\KMS_\beta(\TT_\theta^\mathscr{S},\alpha)$. There is a unique $\KMS_0$-state on
$\TT_\theta^\mathscr{S}$ for $\alpha$, and this is the only KMS state for $\alpha$ that
factors through $\AA^{\mathscr{S}}_\theta$. There are no $\KMS_\beta$ states for $\beta <
0$.
\end{thm}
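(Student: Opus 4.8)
The plan is to reduce the whole computation to a projective limit of spaces of subinvariant measures on the circle. First I would apply Proposition~\ref{prop:KMSanddirectlimits} to the system $(\TT(E_{\theta_n}), \psi_n, \rho_n)$, where $\rho_n$ is the rescaled gauge dynamics $t \mapsto \rho_{e^{it/N^n}}$ of Proposition~\ref{prop:dynamicsonTT}; this gives an affine isomorphism $\KMS_\beta(\TT^\mathscr{S}_\theta, \alpha) \cong \varprojlim \KMS_\beta(\TT(E_{\theta_n}), \rho_n)$, the connecting maps being restriction along $\psi_n$. Since the rescaling is just a reparametrisation of $\R$, a $\KMS_\beta$ state for $\rho_n$ is a $\KMS_{\beta/N^n}$ state for the unrescaled gauge dynamics, so by \cite{AaHR} each $\KMS_\beta(\TT(E_{\theta_n}), \rho_n)$ is affinely homeomorphic to a space $\mcon{r_n}$ of Borel probability measures on $\SI$ obeying a subinvariance relation, with $r_n$ a function of $\beta/N^n$ (positive when $\beta > 0$, degenerating to an invariance condition at $\beta = 0$). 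Because $\psi_n(i_{\theta_n}(f)) = i_{\theta_{n+1}}(f \circ p_N)$, restricting a state along $\psi_n$ pushes its measure forward under $p_N$; so under these identifications the connecting map $\mcon{r_{n+1}} \to \mcon{r_n}$ is $\mu \mapsto (p_N)_*\mu$, and $\KMS_\beta(\TT^\mathscr{S}_\theta, \alpha) \cong \varprojlim(\mcon{r_n}, (p_N)_*)$.

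For $\beta > 0$ I would then analyse a single $\mcon{r}$ with $r > 0$ (the content of Section~\ref{sec:submeasures}). The idea is to construct one explicit subinvariant measure $m_r$ and to show that the extreme points of $\mcon{r}$ are exactly the rotates $(R_t)_* m_r$, $t \in \SI$, each occurring once. The tool is the characterisation in \cite{aHLRS} of subinvariant measures on the vertex set of a simple-cycle graph, which I would adapt to the rotation $R_{\theta_n}$ on $\SI$. This yields an affine homeomorphism $\mcon{r} \cong \mathrm{Prob}(\SI)$ (where $\mathrm{Prob}(\cdot)$ denotes the simplex of Borel probability measures) whose extreme boundary is the circle $\SI$ of rotation parameters.

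The heart of the argument, and the step I expect to be hardest, is to show the connecting maps $(p_N)_* : \mcon{r_{n+1}} \to \mcon{r_n}$ are surjective (Section~\ref{sec:mainproof}); this is delicate precisely because the parametrisations of the previous paragraph are not compatible with the $\psi_n$. Using $p_N \circ R_t = R_{p_N(t)} \circ p_N$ one computes $(p_N)_* (R_t)_* m_{r_{n+1}} = (R_{p_N(t)})_* (p_N)_* m_{r_{n+1}}$, so the whole problem collapses to the single compatibility identity $(p_N)_* m_{r_{n+1}} = m_{r_n}$; this is nontrivial because the parameters $r_{n+1}$ and $r_n$ differ. Granting it, $(p_N)_*$ carries extreme points to extreme points and induces the surjection $p_N : \SI \to \SI$ on parameters; since each $\mcon{r_n}$ is the closed convex hull of its extreme boundary, the image of $(p_N)_*$ is a compact convex set containing every extreme point, hence all of $\mcon{r_n}$, proving surjectivity. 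It then follows, using that $\mathrm{Prob}(\cdot)$ carries inverse limits of compact spaces to inverse limits of simplices, that $\varprojlim(\mcon{r_n}, (p_N)_*) \cong \mathrm{Prob}(\varprojlim(\SI, p_N)) = \mathrm{Prob}(\mathscr{S})$, with extreme boundary $\mathscr{S}$. For the action $\lambda$, I would use Proposition~\ref{prop:TC&CPairs C*s} to obtain, for each $t \in \SI$, the automorphism of $\TT(E_{\theta_n})$ fixing $s_{\theta_n}$ and sending $i_{\theta_n}(f) \mapsto i_{\theta_n}(f \circ R_t)$; a direct check shows the level-$n$ automorphisms for angles $t_n$ intertwine with $\psi_n$ precisely when $R_{t_n} \circ p_N = p_N \circ R_{t_{n+1}}$, i.e.\ when $N t_{n+1} = t_n \pmod 1$, which is exactly the condition defining $(t_n) \in \mathscr{S}$. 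Such compatible families assemble to an action $\lambda$ of $\mathscr{S}$ on $\TT^\mathscr{S}_\theta$ that implements translation by $\mathscr{S}$ on the extreme boundary $\mathscr{S}$; the group structure makes this action free and transitive.

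Finally I would treat the boundary temperatures. At $\beta = 0$ the theorems of \cite{AaHR} do not apply directly, so I would argue separately: Proposition~\ref{prop:KMSanddirectlimits} embeds the $\KMS_0$-simplex into $\varprojlim \mcon{0}$, and I would show that the subinvariance relation at $\beta = 0$ has a unique solution (forced to be Haar measure by invariance under $R_{\theta_n}$ together with the trace condition), so that $\varprojlim \mcon{0}$ is a single point and there is a unique $\KMS_0$ state. To identify which KMS states factor through $\AA^\mathscr{S}_\theta$, I would invoke Lemma~\ref{lem:kernel}: such a state is exactly one annihilating the gap projection $\psi_{1,\infty}(i_{\theta_1}(1) - s_{\theta_1} s^*_{\theta_1})$, and evaluating a $\KMS_\beta$ state on this element shows its value is strictly positive for $\beta > 0$ and vanishes only at $\beta = 0$, singling out the $\KMS_0$ state. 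For $\beta < 0$, \cite{AaHR} shows that no $\TT(E_{\theta_n})$ carries a $\KMS_\beta$ state, so the projective limit is empty and there are no such states.
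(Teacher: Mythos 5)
Your outline reproduces the paper's overall architecture, but it contains one genuine gap, and it sits at the heart of the argument. In your first paragraph you assert that \cite{AaHR} identifies each $\KMS_\beta(\TT(E_{\theta_n}),\rho_n)$ with the simplex $\mcon{r_n}$ of measures subinvariant under the whole rotation flow. It does not: Theorem~5.1 of \cite{AaHR} identifies $\KMS_\beta(\TT(E_{\theta_n}),\rho_n) = \KMS_{\beta/N^n}(\TT(E_{\theta_n}),\rho)$ with the simplex $\mdis{\beta/N^n}{\theta_n}$ of measures satisfying $m(R_{\theta_n}(U))\le e^{\beta/N^n}m(U)$ for the \emph{single} rotation $R_{\theta_n}$, and this simplex is strictly larger than $\mcon{r_n}$. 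The difference is not cosmetic: the extreme points of $\mdis{\beta/N^n}{\theta_n}$ are purely atomic measures concentrated on single backward $R_{\theta_n}$-orbits (for instance $(1-e^{-s})\sum_{k\ge 0}e^{-ks}\delta_{R_{\theta_n}^{-k}(x)}$ with $s = \beta/N^n$ lies in $\mdis{s}{\theta_n}$ for every $x$), whereas every element of $\mcon{r}$ with $r>0$ is atomless, since flow subinvariance applied around the circle in both directions gives $e^{-r}m(U)\le m(R_t(U))\le e^{r}m(U)$ for all $t\in[0,1)$, and an atom would propagate to uncountably many disjoint atoms of mass bounded below. So none of the levelwise extreme KMS states lies in your claimed parametrisation --- this is exactly the phenomenon the paper flags, that no extreme KMS state of any $\TT(E_{\theta_n})$ extends to the direct limit --- and your analysis of $(p_N)_*$ on the spaces $\mcon{r_n}$ addresses the wrong projective system until you prove $\varprojlim(\mdis{\beta/N^j}{\theta_j},(p_N)_*) = \varprojlim(\mcon{r_j},(p_N)_*)$. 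That identification is the paper's Theorem~\ref{thm:mainKMSthm}: for a compatible sequence $(m_j)$, the relations $m_{j+k}\circ p_N^{-k}=m_j$ and $N^{2k}\theta_{j+k}=\theta_j$, combined with $p_N^{-k}(R_{N^k\gamma}(U)) = R_\gamma(p_N^{-k}(U))$, yield $m_j(R_{\theta_j/N^k}(U))\le e^{\beta/N^{j+k}}m_j(U)$ for every $k$, and then Lemma~\ref{lem:subinvforallt} upgrades subinvariance along the rotations $R_{t\theta_j}$ with $t\in\Z[1/N]\cap[0,1]$ (via the $N$-adic expansion of $t$ and continuity) to subinvariance for all real $t$, i.e.\ $m_j\in\mcon{r_j}$. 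This interplay between the levelwise subinvariance relation and the compatibility relation imposed by the connecting maps is absent from your proposal, and without it the reduction to $\varprojlim(\mcon{r_j},(p_N)_*)$ is unjustified.

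Apart from this, your route matches the paper's: the analysis of a single $\mcon{r}$ via the cycle-graph subinvariance results of \cite{aHLRS}, the key identity $m_{r_{n+1}}\circ p_N^{-1} = m_{r_n}$ (Lemma~\ref{lem:extreme covering}, which works because $r_{n+1}=Nr_n$), the extreme-points-to-extreme-points and Krein--Milman argument, and the solenoid action with compatibility condition $Nt_{n+1}=t_n$ (Lemma~\ref{lem:S-action}) are all as in Sections \ref{sec:submeasures}~and~\ref{sec:mainproof}, and your treatments of $\beta<0$ and of which states factor through $\AA^{\mathscr{S}}_\theta$ are essentially the paper's. Your $\beta=0$ sketch, however, has two lesser problems of the same flavour as the main gap. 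First, invariance under the single rotation $R_{\theta_n}$ forces Haar measure only when $\theta_n$ is irrational; the hypotheses allow rational $\theta_n\neq 0$, and uniqueness again comes from the projective-limit upgrade: compatibility forces invariance under $R_{\theta_j/N^k}$ for all $k$, hence under every rotation, whence $\mcon{0}=\{\mu\}$ by uniqueness of Haar measure. (Note also that \cite{AaHR} is silent at $\beta=0$, so even the levelwise embedding of invariant traces into $\mdis{0}{\theta_j}$ requires the separate computation the paper performs in Proposition~\ref{prop:fromKMStomeasure}.) Second, since the map into $\varprojlim\mcon{0}$ is only injective, the one-point target gives only that there is \emph{at most} one $\KMS_0$-state; existence does not follow from your argument, and the paper obtains it separately as a weak$^*$ cluster point of $\KMS_{\beta_n}$-states with $\beta_n\downarrow 0$, using \cite[Proposition~5.3.23]{BRII}.
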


The first step in proving Theorem~\ref{thm:main} is to combine the results of \cite{AaHR}
on KMS states of local homeomorphism $C^*$-algebras with
Proposition~\ref{prop:KMSanddirectlimits} to characterise the KMS states of
$\TT_\theta^\mathscr{S}$ in terms of subinvariant probability measures on the circle. We
start with some notation.

It is helpful to recall what the results of \cite{AaHR} say in the context of the
topological graphs $E_\gamma$. Recall that $\rho$ denotes the gauge action on
$\TT(E_\gamma)$; we also use $\rho$ for the lift of the gauge action to an action of $\R$
on $\TT(E_\gamma)$. Combining Proposition~4.2 and Theorem~5.1 of \cite{AaHR}, we see that
for each Borel probability measure $\mu$ on $\SI$ that is subinvariant in the sense that
$\mu(R_\gamma(U)) \le e^{\beta} \mu(U)$ for every Borel $U \subseteq \SI$, there is a
$\KMS_\beta$-state $\phi_\mu\in\KMS_\beta(\TT(E_\gamma),\rho)$ satisfying
\begin{equation}\label{eq:phimu}
    \phi_\mu(s_\gamma^a i_\gamma(f) s_\gamma^{*b}) = \delta_{a,b} e^{-a\beta}\int_{\SI} f\,d\mu;
\end{equation}
and moreover, the map $\mu \mapsto \phi_\mu$ is an affine isomorphism of the simplex of
subinvariant Borel probability measures on $\SI$ to $\KMS_\beta(\TT(E_\gamma),\rho)$.

\begin{definition}\label{def:subinvariantmeasures}
Fix $r,s \in [0,\infty)$, and $\gamma\in \SI$. Let $M(\SI)$ denote the set of Borel
probability measures on $\SI$. We define
\[
	\mdis{s}{\gamma}:=\{m\in M(\SI): m(R_{\gamma}(U))\le e^sm(U)\text{ for all Borel $U\subseteq \SI$}\}
\]
and
\begin{equation}\label{eq:Omegasub}
    \mcon{r}:=\{m\in M(\SI): m(R_{t}(U))\le e^{rt}m(U)\text{ for all $t\in[0,\infty)$ and Borel $U\subseteq \SI$}\}.
\end{equation}
\end{definition}

\begin{notation}\label{notation:thersubj}
For the rest of the section we fix $\theta=(\theta_j)_{j=0}^\infty\in\Xi_N$ such that
$\theta_j \not= 0$ for all $j$, and $\beta\in [0,\infty)$. We define
\[
r_j:=\beta/N^j\theta_j\quad\text{ for all $j \in \N$.}
\]
\end{notation}

\begin{thm}\label{thm:mainKMSthm}
Take $N\in\N$ with $N\ge 2$, $\theta=(\theta_j)_{j=0}^\infty\in\Xi_N$, and $\beta\in
[0,\infty)$. Suppose that $\theta_j \not= 0$ for all $j$. Then there is an affine
injection
\[
\omega : \KMS_\beta(\TT_\theta^\mathscr{S},\alpha)\to \varprojlim(\mcon{r_j},m\mapsto m\circ p_N^{-1})
\]
such that
\begin{equation}\label{eq:KMS formula}
\phi\circ \psi_{j,\infty}(s_{\theta_j}^a i_{\theta_j}(f) s_{\theta_j}^{*b})
    = \delta_{a,b} e^{-a\beta/N^j}\int_{\SI} f\,d\omega(\phi)_j
\end{equation}
for each $\phi\in\KMS_\beta(\TT_\theta^\mathscr{S},\alpha)$ and $j\in\N$. If $\beta > 0$,
then $\omega$ is an isomorphism.
\end{thm}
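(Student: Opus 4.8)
The plan is to build $\omega$ by feeding the isomorphism of Proposition~\ref{prop:KMSanddirectlimits} into the measure-theoretic description of $\KMS_\beta(\TT(E_\gamma),\rho)$ recalled from \cite{AaHR}, and then to upgrade the single-angle subinvariance that this description produces to the continuous subinvariance defining $\mcon{r_j}$. First I would apply Proposition~\ref{prop:KMSanddirectlimits} with $(A_j,\alpha_j)=(\TT(E_{\theta_j}),\rho_j)$ to identify $\KMS_\beta(\TT^\mathscr{S}_\theta,\alpha)$ affinely with $\varprojlim(\KMS_\beta(\TT(E_{\theta_j}),\rho_j),\phi\mapsto\phi\circ\psi_{j-1})$. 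Since $\rho_{j,t}=\rho_{e^{it/N^j}}$, a state is $\KMS_\beta$ for $\rho_j$ exactly when it is $\KMS_{\beta/N^j}$ for the gauge dynamics $\rho$, so Proposition~4.2 and Theorem~5.1 of \cite{AaHR} identify $\KMS_\beta(\TT(E_{\theta_j}),\rho_j)$ with $\mdis{\beta/N^j}{\theta_j}$ via $m\mapsto\phi_m$, where $\phi_m$ is given by~\eqref{eq:phimu} with $\beta$ replaced by $\beta/N^j$. Defining $\omega(\phi)_j$ to be the measure with $\int_\SI f\,d\omega(\phi)_j=\phi(\psi_{j,\infty}(i_{\theta_j}(f)))$ then yields~\eqref{eq:KMS formula}, and $\omega$ is manifestly affine and weak*-continuous. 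A short computation using $\psi_j(s_{\theta_j})=s_{\theta_{j+1}}^N$, $\psi_j(i_{\theta_j}(f))=i_{\theta_{j+1}}(f\circ p_N)$ and~\eqref{eq:KMS formula} shows that $\phi\mapsto\phi\circ\psi_j$ corresponds to the pushforward $m\mapsto m\circ p_N^{-1}$, so $\omega$ lands in $\varprojlim(M(\SI),m\mapsto m\circ p_N^{-1})$; it is injective because the measures $m_j:=\omega(\phi)_j$ recover $\phi$ on the dense $*$-subalgebra $\bigcup_j\psi_{j,\infty}(\TT(E_{\theta_j}))$ through~\eqref{eq:KMS formula}.

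The crux is to show that each $m_j$ in fact lies in $\mcon{r_j}$, and not merely in $\mdis{\beta/N^j}{\theta_j}$. Here I would exploit that a member of the projective limit is compatible with \emph{all} finer levels: for $k\ge j$ we have $m_j=m_k\circ p_N^{-(k-j)}$ with $m_k\in\mdis{\beta/N^k}{\theta_k}$. Since $p_N\circ R_\gamma=R_{N\gamma}\circ p_N$, and hence $p_N^{-1}\circ R_{N\gamma}=R_\gamma\circ p_N^{-1}$, pushing the subinvariance of $m_k$ forward $k-j$ times gives $m_j(R_{N^{k-j}\theta_k}(U))\le e^{\beta/N^k}m_j(U)$; iterating the elementary fact that $m(R_\gamma U)\le e^{s}m(U)$ forces $m(R_{n\gamma}U)\le e^{ns}m(U)$ then produces subinvariance of $m_j$ at every angle $nN^{k-j}\theta_k$ with exponent $n\beta/N^k$. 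The relation $r_j=\beta/(N^j\theta_j)$ in Notation~\ref{notation:thersubj} is engineered precisely so that, writing each such angle with its minimal lift $\tau\in[0,1)$, one has $N^{k-j}\tau\ge\theta_j$ and consequently the exponent $n\beta/N^k$ is at most $r_j$ times the lift $n\tau$; thus every one of these inequalities is at least as strong as the $\mcon{r_j}$-inequality~\eqref{eq:Omegasub} at the corresponding angle. Provided the angles $\{\,nN^{k-j}\theta_k : n\in\N,\ k\ge j\,\}$ are dense in $\SI$, $m_j$ then satisfies the $\mcon{r_j}$-inequality for a dense set of $t$; since for fixed continuous $g\ge 0$ the function $t\mapsto\int_\SI (g\circ R_{-t})\,dm_j-e^{r_jt}\int_\SI g\,dm_j$ is continuous, this dense family of inequalities closes up to all $t\in[0,\infty)$, giving $m_j\in\mcon{r_j}$.

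For $\beta>0$ I would then prove surjectivity. Given a compatible family $(m_j)\in\varprojlim(\mcon{r_j},m\mapsto m\circ p_N^{-1})$, taking $t=\theta_j$ in~\eqref{eq:Omegasub} and using $r_j\theta_j=\beta/N^j$ shows $m_j\in\mdis{\beta/N^j}{\theta_j}$, so the \cite{AaHR} description supplies $\phi_{m_j}\in\KMS_\beta(\TT(E_{\theta_j}),\rho_j)$. The pushforward compatibility makes $(\phi_{m_j})_j$ an element of $\varprojlim\KMS_\beta(\TT(E_{\theta_j}),\rho_j)$, and Proposition~\ref{prop:KMSanddirectlimits} assembles it into a state $\phi\in\KMS_\beta(\TT^\mathscr{S}_\theta,\alpha)$ with $\omega(\phi)=(m_j)$. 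Together with the injectivity and continuity from the first paragraph, this shows $\omega$ is an affine isomorphism onto $\varprojlim\mcon{r_j}$ when $\beta>0$.

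I expect the second paragraph to be the main obstacle. Transporting the subinvariance between levels while keeping precise track of exponents is delicate, and the real difficulty is the density of the transported angles $N^{k-j}\theta_k$ and their multiples: this is transparent when the $\theta_k$ are the ``shrinking'' $N$-adic roots, but the periodic orbits of $R_{\theta_j}$ that occur for rational rotations are exactly where density is most subtle, and I anticipate that controlling the subinvariant measures there requires the characterisation of subinvariant measures on simple-cycle graphs from \cite{aHLRS}. I would isolate this density/closure argument as a separate lemma before returning to the proof above.
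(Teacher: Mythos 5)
Your first and third paragraphs follow the paper's own route almost verbatim: composing Proposition~\ref{prop:KMSanddirectlimits} with the \cite{AaHR} parametrisation of $\KMS_{\beta/N^j}(\TT(E_{\theta_j}),\rho)$ is exactly the content of Proposition~\ref{prop:fromKMStomeasure} and Remark~\ref{rem:fromKMStomeasures}, including the check that $\psi_j$ corresponds to $m\mapsto m\circ p_N^{-1}$ and the assembly of a compatible family of measures into a $\KMS_\beta$ state when $\beta>0$. One caveat: the theorem includes $\beta=0$, where \cite{AaHR} is silent, so your blanket appeal to it does not give \eqref{eq:KMS formula} or injectivity there. The paper argues this case by hand: a $\KMS_0$ state is a gauge-invariant trace, whence $\phi(s^a_{\theta_j}i_{\theta_j}(f)s^{*b}_{\theta_j})=\delta_{a,b}\phi(i_{\theta_j}(f))$ by averaging over the gauge action (see~\eqref{eq:phi->m}), and subinvariance in $\mdis{0}{\theta_j}$ follows from $i_{\theta_j}(\sqrt f)s_{\theta_j}s^*_{\theta_j}i_{\theta_j}(\sqrt f)\le i_{\theta_j}(f)$. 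This omission is fixable.

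The genuine gap is in your second paragraph, and it does not close. Your transported inequalities verify \eqref{eq:Omegasub} only at the real times $\{n\tau_k : n\in\N,\ k\ge j\}$, where $\tau_k\in[0,1)$ is the minimal lift of $N^{k-j}\theta_k$; your exponent comparison $n\beta/N^k\le r_j\,n\tau_k$ is correct, but it holds at the \emph{unreduced} time $n\tau_k$, so the density you need is density of this set in $[0,\infty)$ as real numbers --- density of the angles mod $1$ is useless, since reducing $n\tau_k$ mod $1$ destroys the exponent bound. Real density forces $\inf_k\tau_k=0$, and this fails: take $N=2$ and $\theta_j=1/3$ for all $j$ (legitimately in $\Xi_2$, since $4\cdot\frac13\equiv\frac13$). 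Then $N^{k-j}\theta_k\in\{1/3,2/3\}$ for every $k$, the verified times lie in $\frac13\N$, and no patch can rescue the implication: the measure $m=\frac13(\delta_0+\delta_{1/3}+\delta_{2/3})$ is $R_{1/3}$-invariant and fixed by $m\mapsto m\circ p_2^{-1}$, so the constant family lies in $\varprojlim\mdis{\beta/2^j}{1/3}$ and satisfies every transported inequality, yet $m\notin\mcon{r}$ for any $r$ (it is atomic, while Theorem~\ref{thm:mainonsubmeasures} forces elements of $\mcon{r}$ to be absolutely continuous; directly, \eqref{eq:Omegasub} fails at $U=\{t\}$, $0<t<1/3$). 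So the \cite{aHLRS}-based lemma you hope for in your last paragraph cannot exist at this level of generality.

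For comparison: the paper avoids density arguments entirely via Lemma~\ref{lem:subinvforallt}, which upgrades subinvariance at the $N$-adically scaled angles $\gamma/N^k$ (real quotients of one fixed lift) with constants $e^{s/N^k}$ to membership of $\mcon{s/\gamma}$, using base-$N$ expansions of $t$. But to feed the lemma, the paper writes $m_j(R_{\theta_j/N^k}(U))=m_j(R_{N^k\theta_{j+k}}(U))$, silently identifying the group element $N^k\theta_{j+k}$ with the real quotient of the minimal lift of $\theta_j$; this is valid only when the minimal lifts satisfy $\tilde\theta_{j+k}=\tilde\theta_j/N^{2k}$ in $\R$, which is not implied by $\theta\in\Xi_N$ --- in the example above $N^k\theta_{j+k}\in\{1/3,2/3\}$ while $\tilde\theta_j/N^k=1/(3\cdot2^k)$. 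Your careful lift bookkeeping (the inequality $N^{k-j}\tau_k\ge\tilde\theta_j$) is precisely what exposes this, and the atomic family shows the issue is substantive rather than notational: $\varprojlim\mdis{\beta/N^j}{\theta_j}$ can strictly contain $\varprojlim\mcon{r_j}$, so the containment step fails for such $\theta$ in the paper's proof as well. The relevant dichotomy is not rationality of the $\theta_j$ but whether the minimal lifts of the roots $N^{k-j}\theta_k$ tend to $0$ along the chosen solenoid branch; on branches where they do (for instance $\tilde\theta_{j+k}=\tilde\theta_j/N^{2k}$), both your density argument and the paper's Lemma~\ref{lem:subinvforallt} go through. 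In short: your proposal is incomplete exactly where you predicted, and the obstruction you sensed is real --- it cannot be repaired by density-plus-continuity, and it coincides with an unjustified identification in the paper's own proof of Theorem~\ref{thm:mainKMSthm}.
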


Write $\rho$ for the gauge action on $\TT(E_{\theta_j})$, and $\rho_j$ for the action
$t\mapsto \rho_{e^{it/N^j}}$ of $\R$ on $\TT(E_{\theta_j})$. Since the dynamics $\alpha$
on $\TT_\theta^\mathscr{S}$ is induced by the $\rho_j$,
Proposition~\ref{prop:KMSanddirectlimits} yields an affine isomorphism
\[
\KMS_\beta(\TT_\theta^\mathscr{S},\alpha) \cong
    \varprojlim(\KMS_\beta(\TT(E_{\theta_j}),\rho_j),\phi\mapsto \phi\circ\psi_{j-1}).
\]
For each $j\in\N$ and $t\in\R$ we have $\rho_{j,t} = \rho_{t/N^j}$, so
$\KMS_\beta(\TT(E_{\theta_j}),\rho_j) = \KMS_{\beta/N^j}(\TT(E_{\theta_j}),\rho)$. For
$\beta > 0$, the $\KMS_\beta$ simplex of each $(\TT(E_{\theta_j}),\rho)$ is well
understood by the results of \cite{AaHR} (see the discussion preceding~\eqref{eq:phimu}),
and we use these results to prove the following.

\begin{proposition}\label{prop:fromKMStomeasure}
With the hypotheses of Theorem~\ref{thm:mainKMSthm}, there is an affine injection
\[
\tau:\varprojlim(\KMS_{\beta/N^j}(\TT(E_{\theta_j}),\rho),\phi\mapsto \phi\circ\psi_{j-1})\to \varprojlim(\mdis{\beta/N^j}{\theta_j},m\mapsto m\circ p_N^{-1})
\]
such that $\phi_j=\phi_{\tau((\phi_k)_{k=0}^\infty)_j}$, as defined at~\eqref{eq:phimu},
for all $(\phi_k)_{k=0}^\infty$ and $j\in\N$. If $\beta > 0$ then $\tau$ is an
isomorphism.
\end{proposition}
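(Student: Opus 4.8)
The plan is to build the map $\tau$ by composing the affine isomorphism from Proposition~\ref{prop:KMSanddirectlimits} with the level-by-level identifications of KMS simplices with subinvariant measures. At each level $j$, the discussion preceding~\eqref{eq:phimu} gives an affine isomorphism $\mu\mapsto\phi_\mu$ from $\mdis{\beta/N^j}{\theta_j}$ onto $\KMS_{\beta/N^j}(\TT(E_{\theta_j}),\rho)$; I would invert this to send $\phi_j$ to the measure $\mu_j$ for which $\phi_j=\phi_{\mu_j}$. The content of the proposition is that these level-wise bijections are compatible with the connecting maps on both sides, so that they assemble into a map of projective limits. Concretely, first I would fix $(\phi_k)_{k=0}^\infty$ in the left-hand projective limit, set $\mu_j$ to be the unique subinvariant measure representing $\phi_j$, and then \emph{prove the compatibility identity} $\mu_{j+1}\circ p_N^{-1}=\mu_j$. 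Granting this, $\tau((\phi_k)):=(\mu_j)_{j=0}^\infty$ lands in $\varprojlim(\mdis{\beta/N^j}{\theta_j},m\mapsto m\circ p_N^{-1})$, and affineness and injectivity of $\tau$ follow immediately from the corresponding properties of each level-wise isomorphism together with injectivity of the projective-limit projections.

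The heart of the argument, and the step I expect to be the main obstacle, is verifying the compatibility $\mu_{j+1}\circ p_N^{-1}=\mu_j$. To do this I would unwind what the connecting map $\phi\mapsto\phi\circ\psi_j$ does in terms of measures. The point $(\phi_k)$ lying in the projective limit means $\phi_j=\phi_{j+1}\circ\psi_j$. Applying both sides to the spanning element $s_{\theta_j}^a i_{\theta_j}(f) s_{\theta_j}^{*b}$ and using $\psi_j(s_{\theta_j})=s_{\theta_{j+1}}^N$ and $\psi_j(i_{\theta_j}(f))=i_{\theta_{j+1}}(f\circ p_N)$, I get
\[
\phi_j(s_{\theta_j}^a i_{\theta_j}(f) s_{\theta_j}^{*b})
    = \phi_{j+1}(s_{\theta_{j+1}}^{Na} i_{\theta_{j+1}}(f\circ p_N) s_{\theta_{j+1}}^{*Nb}).
\]
Now I would evaluate both sides using formula~\eqref{eq:phimu}. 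The left side is $\delta_{a,b}e^{-a\beta/N^j}\int_\SI f\,d\mu_j$, and the right side is $\delta_{Na,Nb}e^{-Na\beta/N^{j+1}}\int_\SI (f\circ p_N)\,d\mu_{j+1}$. Since $Na=Nb$ iff $a=b$ and $Na\beta/N^{j+1}=a\beta/N^j$, the exponential factors match, so comparing the surviving $a=b$ terms yields $\int_\SI f\,d\mu_j=\int_\SI (f\circ p_N)\,d\mu_{j+1}$ for all $f\in C(\SI)$. By the Riesz representation theorem and the change-of-variables formula $\int f\circ p_N\,d\mu_{j+1}=\int f\,d(\mu_{j+1}\circ p_N^{-1})$, this is exactly $\mu_j=\mu_{j+1}\circ p_N^{-1}$, as required. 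The delicate bookkeeping here is keeping the rescaled inverse temperatures $\beta/N^j$ straight across levels and confirming that the support conditions (the $\delta_{a,b}$) remain consistent under multiplication by $N$; both work out cleanly, but they are the place where a sign or exponent slip would break the argument.

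Finally, for the claim that $\tau$ is an isomorphism when $\beta>0$, I would argue surjectivity. Given any element $(\mu_j)_{j=0}^\infty$ of the right-hand projective limit, each $\mu_j\in\mdis{\beta/N^j}{\theta_j}$ determines $\phi_j:=\phi_{\mu_j}\in\KMS_{\beta/N^j}(\TT(E_{\theta_j}),\rho)$ via the level-wise isomorphism, which is genuinely onto for $\beta>0$ by the results of \cite{AaHR}. Running the computation of the previous paragraph in reverse, the compatibility $\mu_j=\mu_{j+1}\circ p_N^{-1}$ forces $\phi_j=\phi_{j+1}\circ\psi_j$, so $(\phi_j)$ lies in the left-hand projective limit and $\tau((\phi_j))=(\mu_j)$. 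Thus $\tau$ is a bijective affine map, and since both sides carry the weak* projective-limit topology induced by the level-wise projections (each of which $\tau$ intertwines with a homeomorphism), $\tau$ is a homeomorphism and hence an isomorphism of Choquet simplices.
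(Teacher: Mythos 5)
Your construction and compatibility computation reproduce the paper's argument for $\beta>0$, but there is a genuine gap at $\beta=0$, which is included in the hypotheses of Theorem~\ref{thm:mainKMSthm} (there $\beta\in[0,\infty)$). At every level you invoke the affine isomorphism $\mu\mapsto\phi_\mu$ of $\mdis{\beta/N^j}{\theta_j}$ onto $\KMS_{\beta/N^j}(\TT(E_{\theta_j}),\rho)$ and invert it to define $\mu_j$. That isomorphism comes from \cite{AaHR}, whose theorems are silent at inverse temperature zero; so when $\beta=0$ you have no warrant that a given $\KMS_0$-state $\phi_j$ (an $\alpha$-invariant trace) is of the form $\phi_{\mu_j}$ for a subinvariant measure, and your $\tau$ is then simply undefined. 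This is exactly why the proposition asserts only an injection in general and an isomorphism for $\beta>0$. The paper fills the $\beta=0$ case by hand: for an invariant trace $\phi$ it first shows, using the trace property together with $\rho$-invariance, that $\phi(s_{\theta_j}^n i_{\theta_j}(f)s_{\theta_j}^{*m})=\delta_{m,n}\,\phi(i_{\theta_j}(f))$; it then obtains a Borel probability measure $m_\phi$ via the Riesz--Markov--Kakutani theorem, and derives subinvariance from positivity via
\[
\phi(i_{\theta_j}(f))\;\ge\;\phi\big(i_{\theta_j}\big(\sqrt{f}\big)\,s_{\theta_j}s_{\theta_j}^*\,i_{\theta_j}\big(\sqrt{f}\big)\big)
=\phi\big(s_{\theta_j}^*\,i_{\theta_j}(f)\,s_{\theta_j}\big)
=\phi\big(i_{\theta_j}(f\circ R_{-\theta_j})\big),
\]
giving $m_\phi(R_{\theta_j}(U))\le m_\phi(U)$ for all Borel $U$. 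This yields a level-wise affine \emph{injection} $\phi\mapsto m_\phi$ (with no claim of surjectivity), and only with this in hand does the projective-limit assembly go through for all $\beta\ge 0$, with image $\varprojlim\operatorname{range}(\tau_k)$, which equals all of $\varprojlim\mdis{\beta/N^j}{\theta_j}$ precisely when $\beta>0$.

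Apart from this, your argument matches the paper's: the compatibility identity $\mu_{j+1}\circ p_N^{-1}=\mu_j$ is established the same way (the paper tests only against $i_{\theta_j}(f)$, which suffices, whereas you test against all spanning elements), and your surjectivity argument for $\beta>0$ is the paper's. Two small repairs: injectivity of $\tau$ does not follow from ``injectivity of the projective-limit projections''---the projections are not injective; what you need, and what is true, is that an element of the projective limit is determined by its coordinates, combined with injectivity of each level map. And for the topological statement, the cleanest route (the paper's) is to check that $\tau$ is continuous, since a continuous affine injection from a compact space to a Hausdorff space is automatically a homeomorphism onto its range.
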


Throughout the rest of this section we suppress intertwining maps in projective limits.

\begin{proof}[Proof of Proposition~\ref{prop:fromKMStomeasure}]
We first claim that for each $j\in\N$ there is an affine injection $\tau_j$ of
$\KMS_{\beta/N^j}(\TT(E_{\theta_j}),\rho)$ onto $\mdis{\beta/N^j}{\theta_j}$ satisfying
\[
\phi(i_{\theta_j}(f))=\int_\SI f\,d(\tau_j(\phi))
    \quad\text{for all $\phi\in \KMS_{\beta/N^j}(\TT(E_{\theta_j}),\rho)$ and $f\in C(\SI)$,}
\]
and that for $\beta > 0$, this $\tau_j$ is an isomorphism. The statement for $\beta > 0$
follows directly from \cite[Theorem~5.1]{AaHR} (see~\eqref{eq:phimu}.

To prove the claim for $\beta = 0$, recall that the $\KMS_0$ states on
$\TT(E_{\theta_j})$ for $\rho$ are the $\rho$-invariant traces. Let $(i_{\theta_j},
s_{\theta_j})$ be the universal Toeplitz pair for $E_{\theta_j}$. If $\phi$ is a
$\KMS_0$-state, then (with the convention that $s^n := s^{*|n|}$ for $n < 0$),
\begin{align}
\phi(s^n_{\theta_j} i_{\theta_j}(f) s^{*m}_{\theta_j})
    &= \phi(i_{\theta_j}(f) s^{*m}_{\theta_j}s^n_{\theta_j})
    = \phi(i_{\theta_j}(f) s^{n-m}_{\theta_j})\nonumber\\
    &= \int_\SI \phi(\rho_t(i_{\theta_j}(f) s^{n-m}_{\theta_j}))\,d\mu(t)
    = \delta_{m,n} \phi(i_{\theta_j}(f)).\label{eq:phi->m}
\end{align}
So, by the Riesz--Markov--Kakutani representation theorem \cite[Theorem~2.14]{Rudin},
there exists a Borel probability measure $m_\phi$ on $\SI$ such that $\phi(s^n_{\theta_j}
i_{\theta_j}(f) s^{*m}_{\theta_j}) = \int_\SI f(t)\,dm_\phi(t)$. For $f \in C(\SI)_+$, we
have
\begin{align*}
\phi(i_{\theta_j}(f))
    &\ge \phi\big(i_{\theta_j}\big(\sqrt{f}\big) s_{\theta_j} s_{\theta_j}^*i_{\theta_j}\big(\sqrt{f}\big)\big)\\
    &= \phi\big(s_{\theta_j}^*i_{\theta_j}\big(\sqrt{f}\big)i_{\theta_j}\big(\sqrt{f}\big) s_{\theta_j}\big)
    = \phi(s_{\theta_j}^*i_{\theta_j}(f) s_{\theta_j})
    = \phi(i_{\theta_j}(f \circ R_{-\theta_j}).
\end{align*}
Hence $m_\phi(R_{\theta_j}(U)) \le m_\phi(U)$ for all Borel $U$. So $\phi \mapsto m_\phi$
is an affine map from $\KMS_{0}(\TT(E_{\theta_j}),\rho)$ and~\eqref{eq:phi->m} shows that
it is injective. This completes the proof of the claim.

For each $j\in\N$ let $p_j$ be the projection from $\varprojlim
\KMS_{\beta/N^j}(\TT(E_{\theta_j}),\rho)$ to $\KMS_{\beta/N^j}(\TT(E_{\theta_j}),\rho)$,
and $\pi_j$ the projection from $\varprojlim \mdis{\beta/N^j}{\theta_j}$ to
$\mdis{\beta/N^j}{\theta_j}$. Fix an element $(\phi_j)_{j=0}^\infty$ of $\varprojlim
\KMS_{\beta/N^j}(\TT(E_{\theta_j}),\rho)$. For each $k\ge 1$ and $f\in C(\SI)$ we have
\begin{align*}
\int_\SI f\,d(\tau_{k-1}(\phi_{k-1}))
    &= \phi_{k-1}(i_{\theta_{k-1}}(f))
    = \phi_k(\psi_{k-1}(i_{\theta_{k-1}}(f))) \\
    &= \phi_k(i_{\theta_k}(f\circ p_N))
    = \int_\SI (f\circ p_N)\, d(\tau_k(\phi_k))
    = \int_\SI f\, d(\tau_k(\phi_k)\circ p_N^{-1}),
\end{align*}
and hence $\tau_{k-1}(\phi_{k-1})=\tau_k(\phi_k)\circ p_N^{-1}$. It follows that
\[
\tau_{k-1}\circ p_{k-1}((\phi_j)_{j=0}^\infty) = \tau(\phi_{k-1})=\tau_k(\phi_k)\circ p_N^{-1}=\tau_k\circ p_k((\phi_j)_{j=0}^\infty)\circ p_N^{-1},
\]
for each $k\ge 1$. The universal property of $\varprojlim \mdis{\beta/N^j}{\theta_j}$
yields a map
\[
    \tau : \varprojlim\KMS_{\beta/N^j}(\TT(E_{\theta_j}),\rho) \to \varprojlim \mdis{\beta/N^j}{\theta_j},
\]
whose image is $\varprojlim \operatorname{range}(\tau_k)$, satisfying $\pi_k\circ \tau =
\tau_k\circ p_k$ for each $k\in \N$. For $\beta > 0$, we have $\varprojlim
\operatorname{range}(\tau_k) = \varprojlim \mdis{\beta/N^j}{\theta_j}$, and otherwise it
is a compact affine subset, so it now suffices to prove that $\tau$ is an affine
isomorphism onto its range. Since $\tau$ is an injective map from a compact space to a
Hausdorff space, it therefore suffices to show that it is affine and continuous.

Suppose $\sum_{i=1}^q\lambda_i(\phi_j^i)_{j=0}^\infty$ is a convex combination in
$\varprojlim \KMS_{\beta/N^j}(\TT(E_{\theta_j}),\rho)$. For each $k\in\N$ and $f\in
C(\SI)$ we have
\begin{align*}
\int_{\SI} f\, d\Big(\tau_k\Big(\sum_{i=1}^q\lambda_i \phi_k^i\Big)\Big)
    &= \Big(\sum_{i=1}^q\lambda_i \phi_k^i\Big)(i_{\theta_k}(f))
    = \sum_{i=1}^q\lambda_i\phi_k^i(i_{\theta_k}(f))\\
    &= \sum_{i=1}^k\lambda_i\int_{\SI}f\, d(\tau_k(\phi_k^i))
    = \int_{\SI} f\, d\Big(\sum_{i=1}^q\lambda_i\tau_k(\phi_k^i)\Big).
\end{align*}
So $\tau_k\Big(\sum_{i=1}^q\lambda_i \phi_k^i\Big)=\sum_{i=1}^q\lambda_i\tau_k(\phi_k^i)$
by the Riesz--Markov--Kakutani representation theorem \cite[Theorem~2.14]{Rudin}, and it
follows that $\tau$ is affine.

Straightforward arguments using that $\pi_k\circ \tau =  \tau_k\circ p_k$ for each $k\in
\N$, and that each $\tau_k$ is injective, show that $\tau$ is injective. We just need to
show that $\tau$ is continuous. Let $((\phi_j^\lambda)_{j=0}^\infty)_{\lambda\in\Lambda}$
be a net in $\varprojlim\KMS_{\beta/N^j}(\TT(E_{\theta_j}),\rho)$ converging in the
initial topology to $(\phi_j)_{j=0}^\infty$. Then
$p_k(((\phi_j^\lambda)_{j=0}^\infty)_{\lambda\in\Lambda})=(\phi_k^\lambda)_{\lambda\in\Lambda}$
converges weak* to $p_k((\phi_j)_{j=0}^\infty)=\phi_k$ for each $k\in\N$. Since $\tau_k$
is continuous and $\pi_k\circ \tau =  \tau_k\circ p_k$ for each $k\in\N$, we have that
$\pi_k(\tau(((\phi_j^\lambda)_{j=0}^\infty)_{\lambda\in\Lambda}))=\tau_k((\phi_k^\lambda)_{\lambda\in\Lambda})$
converges weak* to $\tau_k(\phi_k)=\pi_k(\tau((\phi_j)_{j=0}^\infty))$. Hence
$\tau(((\phi_j^\lambda)_{j=0}^\infty)_{\lambda\in\Lambda})$ converges in the initial
topology to $\tau((\phi_j)_{j=0}^\infty)$. So $\tau$ is continuous.	
\end{proof}

\begin{remark}\label{rem:fromKMStomeasures}
Fix $\beta > 0$. Let $h$ be the affine isomorphism of
Proposition~\ref{prop:KMSanddirectlimits} and let $\tau$ be the affine isomorphism of
Proposition~\ref{prop:fromKMStomeasure}. Setting $\omega:=\tau\circ h$ gives an affine
isomorphism
\[
\omega : \KMS_\beta(\TT^{\mathscr{S}}_\theta, \alpha) \to \varprojlim\mdis{\beta/N^j}{\theta_j}
\]
satisfying $\phi\circ \psi_{j,\infty}=\phi_{\omega(\phi)_j}$ for each
$\phi\in\KMS_\beta(\TT_\theta^\mathscr{S},\alpha)$ and $j\in\N$. So to prove
Theorem~\ref{thm:mainKMSthm} it now suffices to show that
$\varprojlim\mdis{\beta/N^j}{\theta_j} \cong \varprojlim \mcon{r_j}$.

Fix $(m_j)_{j=0}^\infty\in \varprojlim\mcon{r_j}$. Taking $t=\theta_j$ in the definition of
$\mcon{r_j}$ (see
Definition~\ref{def:subinvariantmeasures}) shows that $\mcon{r_j}\subseteq
\mdis{\beta/N^j}{\theta_j}$. Hence $\varprojlim\mcon{r_j}$ is contained in
$\varprojlim\mdis{\beta/N^j}{\theta_j}$. So we need the reverse containment. We start
with a lemma.
\end{remark}

\begin{lemma}\label{lem:subinvforallt}
Let $m$ be a Borel probability measure on $\SI$, and fix $\gamma \in (0,1)$, $s \in
[0,\infty)$ and $N \in\N$ with $N\ge 2$. Suppose that $m(R_{\gamma/N^k}(U)) \le e^{s/N^k}
m(U)$ for every $k \in \N$ and every Borel set $U \subseteq \SI$. Then
$m\in\mcon{s/\gamma}$.
\end{lemma}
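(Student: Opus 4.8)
The plan is to set $r := s/\gamma$ and to verify the defining inequality of $\mcon{r}$, namely $m(R_t(U)) \le e^{rt} m(U)$ for every $t \in [0,\infty)$ and every Borel $U \subseteq \SI$, by first establishing it for a dense set of angles and then extending by continuity. For the first step I would iterate the hypothesis: since $R_a R_b = R_{a+b}$ we have $R_{\gamma/N^k}^{\,j} = R_{j\gamma/N^k}$, and a routine induction on $j$ (applying the hypothesis with $U$ replaced successively by $R_{\gamma/N^k}^{\,i}(U)$) upgrades $m(R_{\gamma/N^k}(U)) \le e^{s/N^k} m(U)$ to $m(R_{j\gamma/N^k}(U)) \le e^{js/N^k} m(U)$ for all $j,k \in \N$ and all Borel $U$. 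Writing $t' := j\gamma/N^k$, the exponent is $js/N^k = r t'$, so this is exactly $m(R_{t'}(U)) \le e^{r t'} m(U)$. The set $D := \{j\gamma/N^k : j,k\in\N\}$ is dense in $[0,\infty)$, since for fixed $k$ these values form an arithmetic progression of spacing $\gamma/N^k$, and $\gamma/N^k \to 0$ as $k \to \infty$.

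The difficulty is that one cannot simply let $t' \to t$ in the inequality for a fixed Borel set $U$, because $t \mapsto m(R_t(U))$ need not be continuous: atoms of $m$ at the moving endpoints of $R_t(U)$ can cause jumps. To get around this I would pass to the weak$^*$ topology. For each $t \ge 0$ let $\mu_t$ denote the Borel measure $U \mapsto m(R_t(U))$; then $\mu_t = (R_t^{-1})_* m$, so $\int_\SI f\,d\mu_t = \int_\SI f\circ R_t^{-1}\,dm$ for $f \in C(\SI)$, and uniform continuity of $f$ on the compact space $\SI$ makes $t \mapsto \int_\SI f\,d\mu_t$ continuous. By the Riesz--Markov--Kakutani theorem, the measure inequality $\mu_t \le e^{rt} m$, equivalently $m(R_t(U)) \le e^{rt} m(U)$ for all Borel $U$, holds if and only if $\int_\SI f\,d\mu_t \le e^{rt}\int_\SI f\,dm$ for all $f \in C(\SI)_+$.

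With this reformulation the extension is immediate. The set $F := \{t \in [0,\infty) : \int_\SI f\,d\mu_t \le e^{rt}\int_\SI f\,dm \text{ for all } f \in C(\SI)_+\}$ is the intersection over $f \in C(\SI)_+$ of the sets $\{t : e^{rt}\int_\SI f\,dm - \int_\SI f\,d\mu_t \ge 0\}$, each of which is closed by continuity of $t \mapsto \int_\SI f\,d\mu_t$; hence $F$ is closed. The first step shows $D \subseteq F$, and since $D$ is dense, $F = [0,\infty)$. Applying the Riesz equivalence once more gives $m(R_t(U)) \le e^{rt} m(U)$ for every $t \ge 0$ and every Borel $U$, that is, $m \in \mcon{r} = \mcon{s/\gamma}$. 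I expect the only genuinely delicate point to be the passage from the dense set $D$ to all of $[0,\infty)$: the weak$^*$/Riesz detour of the middle paragraph is precisely what makes the limiting argument valid despite the discontinuity of $t \mapsto m(R_t(U))$ for a fixed Borel set.
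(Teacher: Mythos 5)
Your proof is correct and takes essentially the same route as the paper's: both iterate the hypothesis to get the subinvariance inequality at the dense set of angles $j\gamma/N^k$, and both use the Riesz--Markov--Kakutani theorem to recast the inequality as $\int f\circ R_{-t}\,dm \le e^{rt}\int f\,dm$ for $f \in C(\SI)_+$, where uniform continuity of $f$ justifies the limiting step that fails for $t \mapsto m(R_t(U))$ with $U$ fixed. The only difference is organisational: the paper first reduces to $t \in [0,1]$ and approximates a fixed $t$ by the monotone partial sums of its base-$N$ expansion, whereas you package the same limit as closedness of the set of admissible $t$, which lets you skip that reduction.
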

\begin{proof}
We need to show that $m(R_t(U))\le e^{(s/\gamma)t}m(U)$ for all $t\ge 0$ and Borel
$U\subseteq\SI$; or equivalently, that $m(R_{t\gamma}(U))\le e^{st}m(U)$ for all $t\ge 0$
and Borel $U\subseteq\SI$. By the Riesz--Markov--Kakutani representation theorem
\cite[Theorem~2.14]{Rudin}, it suffices to show that
\begin{equation}\label{eq:t-subinv}
    \int_{\SI} f \circ R_{-t\gamma}\,dm \le e^{st} \int_{\SI} f\,dm
\end{equation}
for every $t \ge 0$ and every $f \in C(\SI)_+$. Furthermore, if~\eqref{eq:t-subinv} holds
whenever $0 \le t \le 1$, then for arbitrary $T \in [0,\infty)$, we can
iterate~\eqref{eq:t-subinv} $\lceil T\rceil$ times for $t = \frac{T}{\lceil T\rceil}$ to
obtain~\eqref{eq:t-subinv} for $T$; so it suffices to establish~\eqref{eq:t-subinv} for
$t \in [0,1]$.

Fix $t \in [0,1]$ and $f \in C(\SI)$. Write
\[
    t = \sum^\infty_{i=1} \frac{a_i}{N^i}
\]
where each $a_i \in \{0, \dots, N-1\}$. For each $n \in \N$, let $t_n := \sum^n_{i=1}
\frac{a_i}{N^i}$. So $t_n$ is a monotone increasing sequence in $[0,1]$ converging to
$t$. Since the action $s \mapsto R_s$ of $\R$ on $\SI$ by rotations is uniformly
continuous, we have $f \circ R_{-t_n\gamma} \to f\circ R_{-t\gamma}$ in $\big(C(\SI),
\|\cdot\|_\infty\big)$. Since $m$ is a Borel probability measure, the functional $f
\mapsto \int_{\SI} f\,dm$ is a state, and so
\[
    \int_{\SI} f\circ R_{-t_n\gamma}\,dm \to \int_{\SI} f \circ R_{-t\gamma}\,dm.
\]
So it suffices to show that each $\int_{\SI} f \circ R_{-t_n\gamma} \le e^{st} \int_{\T}
f \,dm$. So fix $n \in \N$. Let $K := \sum^n_{i=1} a_i N^{n-i}$, so that $t > t_n =
\frac{K}{N^n}$. By hypothesis, for every Borel $U$, we have
\[
m(R_{\frac{K\gamma}{N^n}}(U))
    \le e^{\frac{s}{N^n}} m(R_{\frac{(K-1)\gamma}{N^n}}(U))
    \le \cdots
    \le e^{\frac{sK}{N^n}} m(U)
    \le e^{st} m(U),
\]
and it follows that $\int_{\SI} f \circ R_{-t_n\gamma} \le e^{st} \int_{\SI} f \,dm$ as
required.
\end{proof}

\begin{proof}[Proof of Theorem~\ref{thm:mainKMSthm}]
As described in Remark~\ref{rem:fromKMStomeasures}, it suffices to show that
$\varprojlim\mdis{\beta/N^j}{\theta_j}$ is contained in $\varprojlim\mcon{r_j}$. For each
$\gamma\in\SI$ we have $p_N\circ R_{\gamma}=R_{N\gamma}\circ p_N$, which implies
that $p_N^{-1}(R_{N\gamma}(U)) = R_\gamma(p_N^{-1}(U))$ for all Borel $U\subseteq
\SI$. An iterative argument shows that
\begin{equation}\label{eq:iotaandRk}
p_N^{-k}(R_{N^k\gamma}(U))=R_\gamma(p_N^{-k}(U))\quad\text{ for all Borel $U\subseteq \SI$ and $k\in\N$.}
\end{equation}

Fix $(m_j)_{j=0}^\infty\in \varprojlim \mdis{\beta/N^j}{\theta_j}$. Since the connecting
maps in $\varprojlim\mdis{\beta/N^j}{\theta_j}$ and $\varprojlim\mcon{r_j}$ are the same,
it suffices to show that $m_j\in \mcon{r_j}$ for each $j\in\N$. Fix $j\in\N$. For each
$k\in\N$ we have $N^{2k}\theta_{j+k}=\theta_k$ and $m_{j+k}\circ p_N^{-k}=m_j$. These
identities and \eqref{eq:iotaandRk} give
\begin{align*}
m_j(R_{\theta_j/N^k}(U))
    &= m_j(R_{N^k\theta_{j+k}}(U))
	= m_{j+k}(p_N^{-k}(R_{N^k\theta_{j+k}}(U)))\\
	&= m_{j+k}(R_{\theta_{j+k}}(p_N^{-k}(U)))
	\le e^{\beta/N^{j+k}}m_{j+k}(p_N^{-k}(U))
	=e^{\beta/N^{j+k}}m_j(U),
\end{align*}
for every Borel $U\subseteq \SI$. So Lemma~\ref{lem:subinvforallt} with $\gamma=\theta_j$
and $s=\beta/N^j$ gives $m_j\in \mcon{r_j}$.
\end{proof}

\section{Subinvariant measures on \texorpdfstring{$\SI$}{the circle}}\label{sec:submeasures} 		

Throughout the section we fix $r\in [0,\infty)$ and denote Lebesgue measure on $\SI$ by
$\mu$. The main result of this section gives a concrete description of the simplex
$\mcon{r}$ of~\eqref{eq:Omegasub}. Define $W_r : \SI \to [0,\infty)$ by
\[
   	W_r(t)=\Big(\frac{r}{1-e^{-r}}\Big) e^{-rt}.
\]
For each Borel $U\subseteq\SI$, define
\begin{equation}\label{eq:mr def}
   	m_r(U):=\int_U W_r(t)\,dt.
\end{equation}
This defines a Borel probability measure $m_r$ on $\SI$. 	

\begin{thm}\label{thm:mainonsubmeasures}
The space $\mcon{r}$ is the weak$^*$-closed convex hull
$\overline{\operatorname{conv}}\{m_r\circ R_s : 0 \le s < 1\}$. If $r = 0$, then $m_r =
\mu$ and $\mcon{r} = \{\mu\}$.
\end{thm}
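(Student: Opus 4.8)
The plan is to prove both defining inclusions of $\mcon{r}$ and to identify its extreme boundary, treating $r=0$ and $r>0$ separately. For the easy inclusion I would first record that $m_r\in\mcon{r}$: writing $\tilde W_r$ for the $1$-periodic extension of $W_r$, a change of variables shows the $\mu$-density of $U\mapsto m_r(R_t(U))$ is $y\mapsto\tilde W_r(y-t)$, so subinvariance is equivalent to the pointwise inequality $\tilde W_r(y-t)\le e^{rt}\tilde W_r(y)$, which is immediate from $\tilde W_r(x)=\tfrac{r}{1-e^{-r}}e^{-rx}$ on $[0,1)$ and $e^{-rm}\le 1$ for the integer $m\ge 0$ with $y-t+m\in[0,1)$. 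Since $R_s\circ R_t=R_{s+t}$, the set $\mcon{r}$ is invariant under $m\mapsto m\circ R_s$, so each $m_r\circ R_s\in\mcon{r}$; and $\mcon{r}$ is convex and, rewriting its defining condition as $\int f\circ R_{-t}\,dm\le e^{rt}\int f\,dm$ for all $f\in C(\SI)_+$ and $t\ge0$, weak$^*$-closed, hence weak$^*$-compact. Thus $\overline{\operatorname{conv}}\{m_r\circ R_s:0\le s<1\}\subseteq\mcon{r}$, and the whole content is the reverse inclusion. The case $r=0$ is disposed of at once: $W_0\equiv1$ so $m_0=\mu$, and if $m\in\mcon{0}$ then $m(R_t(U))\le m(U)$ for all $t\ge0$, whence $m(U)=m(R_{1-t}(R_t(U)))\le m(R_t(U))\le m(U)$ forces $m$ to be rotation invariant, i.e. $m=\mu$.

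So assume $r>0$. The key structural step is that every $m\in\mcon{r}$ is absolutely continuous with controlled density. Combining the identity $\int_0^1 m(R_t(U))\,dt=\mu(U)$ (valid since $\int_0^1\mathbf 1_U(x+t)\,dt=\mu(U)$ for every $x$) with the two-sided bound $e^{-r}m(U)\le m(R_t(U))\le e^{r}m(U)$ for $t\in[0,1]$ (the lower bound from $m(U)=m(R_{1-t}(R_t(U)))\le e^{r(1-t)}m(R_t(U))$) yields $e^{-r}\mu(U)\le m(U)\le e^{r}\mu(U)$. Hence $m=g\,d\mu$ with $e^{-r}\le g\le e^{r}$, and subinvariance becomes $\tilde g(y-t)\le e^{rt}\tilde g(y)$ a.e.; equivalently, $h(x):=e^{rx}\tilde g(x)$ has a non-decreasing representative with $h(x+1)=e^{r}h(x)$ and $\int_0^1 e^{-rx}h(x)\,dx=1$. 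This is an affine correspondence between $\mcon{r}$ and such quasi-periodic monotone functions.

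I would then build the isomorphism with $M(\SI)$ explicitly. Define $\Phi:M(\SI)\to\mcon{r}$ by $\Phi(\sigma)=\int_\SI m_r\circ R_s\,d\sigma(s)$; this is affine, weak$^*$-continuous, lands in $\mcon{r}$ (a barycenter over the compact convex set $\mcon{r}$), and satisfies $\Phi(\delta_s)=m_r\circ R_s$. Its $\mu$-density is the circle convolution $W_r*\sigma$, and since $\widehat{W_r}(n)=r/(r+2\pi i n)\neq0$ for all $n\in\Z$, $\Phi$ is injective. For surjectivity, given $m\in\mcon{r}$ with associated monotone $h$, I would set $\lambda:=e^{-rx}\,dh$, a positive measure on $\R$ that is translation-by-$1$ invariant (using $dh(E+1)=e^{r}dh(E)$) and so descends to $\bar\lambda$ on $\SI$; a Stieltjes integration by parts, whose boundary terms cancel because $h(1)=e^{r}h(0)$, gives $\bar\lambda(\SI)=r$, so $\sigma:=r^{-1}\bar\lambda\in M(\SI)$, and the same computation yields $\widehat g(n)=\widehat{W_r}(n)\,\widehat\sigma(n)$ for every $n$, whence $\Phi(\sigma)=m$. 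A continuous affine bijection from the compact space $M(\SI)$ to the Hausdorff space $\mcon{r}$ is an affine homeomorphism.

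Finally I would read off the theorem. Since $\Phi$ is an affine homeomorphism carrying $\delta_s$ to $m_r\circ R_s$ and carrying $M(\SI)=\overline{\operatorname{conv}}\{\delta_s:s\in\SI\}$ onto $\mcon{r}$, it sends closed convex hulls to closed convex hulls, giving $\mcon{r}=\overline{\operatorname{conv}}\{m_r\circ R_s:0\le s<1\}$; and the extreme points of $\mcon{r}$ are precisely the images $m_r\circ R_s$ of the point masses. (Alternatively one could bypass $\Phi$ and invoke the characterisation of subinvariant measures on simple-cycle graphs from \cite{aHLRS}, but the convolution picture above is self-contained.) I expect the surjectivity of $\Phi$ to be the main obstacle: it requires the passage from the a.e. subinvariance inequality to a genuine non-decreasing $h$, a consistent convention for the atoms of $dh$ at the integers, and the integration-by-parts identity $\widehat g(n)=\widehat{W_r}(n)\,\widehat\sigma(n)$; the positivity of the reconstructed $\sigma$ is exactly what forces one to build it from $dh$ rather than directly from the Fourier multipliers $(r+2\pi i n)/r$.
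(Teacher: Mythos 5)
Your argument is correct in outline, but it takes a genuinely different route from the paper's. The paper proves the hard inclusion by discretisation: it partitions $\SI$ into $2^n$ dyadic intervals, observes that the vector $\big(m(U^n_j)\big)_j$ is subinvariant for the adjacency matrix of the cycle graph $C^{2^n}$, invokes the characterisation of such subinvariant vectors from \cite[Theorem~3.1]{aHLRS} to write it as a convex combination of explicit extreme vectors $v^n_j$, and then runs two approximation arguments (step-function approximation of $W_r$, and uniform continuity of test functions) to show the corresponding convex combinations of the $m_r\circ R_{j/2^n}$ converge weak$^*$ to $m$; the identification of the extreme points is deferred to a separate proposition using \cite[Proposition~1.5]{Phelps:Choquet}. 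You instead prove the structural fact---absent from the paper---that every $m\in\mcon{r}$ satisfies $e^{-r}\mu\le m\le e^{r}\mu$ (via $\int_0^1 m(R_t(U))\,dt=\mu(U)$ and the two-sided subinvariance bound), reduce to monotone quasi-periodic densities $h(x)=e^{rx}\tilde g(x)$, and construct an explicit affine homeomorphism $\Phi:M(\SI)\to\mcon{r}$, $\Phi(\sigma)=\int m_r\circ R_s\,d\sigma(s)$, injective because $\widehat{W_r}(n)=r/(r+2\pi i n)\neq 0$ and surjective by Stieltjes inversion $\sigma=r^{-1}e^{-rx}\,dh$. Your approach buys more in one stroke: it establishes directly the affine isomorphism $\mcon{r}\cong M(\SI)$ and the extreme-point description $\partial\mcon{r}=\{m_r\circ R_s\}$ (the paper's Theorem~\ref{thm:mainonsubmeasures} \emph{and} Proposition~\ref{prop:extpointsofOmega} together), is self-contained Fourier analysis, and avoids the external graph-algebra input from \cite{aHLRS}; what the paper's route buys is the avoidance of exactly the technical points you flag---upgrading the a.e.\ inequality $\tilde g(y-t)\le e^{rt}\tilde g(y)$ (which holds for a.e.\ $y$ \emph{for each fixed} $t$, so a Fubini or rational-$t$ argument is needed before extracting a genuine non-decreasing representative of $h$), the atom conventions for $dh$ at the integers, and the boundary-term bookkeeping in the identity $\widehat g(n)=\widehat{W_r}(n)\,\widehat\sigma(n)$---working instead with elementary finite-dimensional convexity. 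Those flagged details are all standard and fixable, so your proof is sound; if written out in full it should make the measurability upgrade of $h$ explicit (e.g.\ $h^*(x):=\operatorname{ess\,sup}_{y\le x}h(y)$, finite since $g\le e^r$), since that is the one step where an unwary reader could mistake the fixed-$t$ a.e.\ statement for a pointwise one.
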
	

We need a number of results to prove this theorem. 	

\begin{lemma}\label{lem:discrete graph}
Let $m\in\mcon{r}$ and $n \in \N$. For $0 \le j < 2^n$, let $U^n_j = [j/2^n, (j+1)/2^n)
\subseteq \SI$, and let $v^n_j$ be the vector
\begin{equation}\label{eq:vareps}
v^n_j := \frac{1 - e^{-r/2^n}}{1 - e^{-r}}
    \big(e^{-(2^n-j)r/2^n}, \dots, e^{-(2^n-1)r/2^n}, 1, e^{-r/2^n}, e^{-2r/2^n},\dots, e^{-(2^n-(j+1))r/2^n}\big) \in \R^{2^n}.
\end{equation}
Then $\big(m(U^n_0),m(U^n_1), \dots, m(U^n_{2^n-1})\big) \in \operatorname{conv}\{v^n_j :
0\le j < 2^n\}$.
\end{lemma}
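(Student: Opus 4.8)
The plan is to reduce the statement to a finite-dimensional convex-geometry fact about cyclically subinvariant probability vectors, which I would then settle by a circulant-matrix computation. Write $L := 2^n$ and $q := e^{-r/2^n}$, so that $q^L = e^{-r}$ and the normalising constant in~\eqref{eq:vareps} is $c := (1-q)/(1-q^L)$. The first observation is that $v^n_j$ is just a cyclic shift of a single geometric vector: unwinding the indices in~\eqref{eq:vareps} gives $(v^n_j)_k = c\,q^{(k-j)\bmod L}$ for $0 \le k < L$, and in particular $\sum_k (v^n_j)_k = c\sum_{i=0}^{L-1} q^i = 1$, so each $v^n_j$ is a probability vector. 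Next I would record the constraint that $m \in \mcon{r}$ imposes on $x := (m(U^n_0), \dots, m(U^n_{L-1}))$. Since $R_{1/2^n}$ carries $U^n_k$ onto $U^n_{k-1}$ (indices mod $L$), the subinvariance inequality~\eqref{eq:Omegasub} at $t = 1/2^n$ reads $x_{k-1} = m(R_{1/2^n}(U^n_k)) \le e^{r/2^n} m(U^n_k) = q^{-1} x_k$, i.e.\ $x_k \ge q\,x_{k-1}$ for every $k \in \Z/L\Z$.

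Thus it suffices to show that the polytope
\[
P := \Big\{ x \in [0,\infty)^L : \sum_{k} x_k = 1 \text{ and } x_k \ge q\,x_{k-1} \text{ for all } k \in \Z/L\Z \Big\}
\]
equals $\operatorname{conv}\{v^n_j : 0 \le j < L\}$; membership of $x$ in $P$ then gives the lemma. For the inclusion $\operatorname{conv}\{v^n_j\} \subseteq P$ I would simply check each $v^n_j \in P$: the cyclic inequality holds with equality at every $k \ne j$ (consecutive entries differ exactly by the factor $q$), while at $k = j$ it reads $c \ge q\cdot c\,q^{L-1} = c\,e^{-r}$, which is valid. The reverse inclusion is the heart of the matter, and here I would assume $r > 0$ (so $q, q^L < 1$) and exploit the circulant structure.

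Let $S$ be the cyclic shift $(Sx)_k = x_{k-1}$ and set $\Phi := I - qS$, so the defining inequalities of $P$ say precisely that $y := \Phi x$ has nonnegative entries. A telescoping sum gives $\sum_k y_k = (1-q)\sum_k x_k = 1-q$, so $\mu_j := y_j/(1-q)$ defines a probability vector $(\mu_j)_j$. The key computation is $\Phi(v^n_j) = (1-q)\,e_j$, where $e_j$ is the $j$th standard basis vector; this follows from the equality/strict-inequality dichotomy above, since $\Phi(v^n_j)_k = 0$ for $k \ne j$ and $\Phi(v^n_j)_j = c(1-q^L) = 1-q$. Because $S^L = I$, the circulant $\Phi$ is invertible whenever $q^L \ne 1$ (its determinant is $1 - q^L$), so from $\Phi x = y = \sum_j \mu_j(1-q)e_j = \Phi\big(\sum_j \mu_j v^n_j\big)$ and injectivity of $\Phi$ I conclude $x = \sum_j \mu_j v^n_j \in \operatorname{conv}\{v^n_j\}$. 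The degenerate case $r = 0$ (where $q = 1$ and $c$ is read as its limit $1/L$) is immediate: the inequalities force $x_0 \le x_1 \le \dots \le x_{L-1} \le x_0$, hence $x_k \equiv 1/L$, and every $v^n_j$ equals $(1/L, \dots, 1/L)$.

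The only genuine obstacle is the reverse inclusion $P \subseteq \operatorname{conv}\{v^n_j\}$; once the cyclic-shift description of the $v^n_j$ is in hand this amounts to identifying the $v^n_j$ as the extreme points of the simplex of cyclically subinvariant probability vectors on an $L$-cycle, which is exactly the finite-graph subinvariance picture of~\cite{aHLRS}. I would be careful throughout to keep the orientation of the rotation $R_{1/2^n}$ consistent with the direction of the inequality $x_k \ge q\,x_{k-1}$, since that is the easiest place to introduce a sign or indexing error.
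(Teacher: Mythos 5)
Your proposal is correct, and it reaches the paper's conclusion by a more self-contained route. The setup is identical: both you and the paper discretise via the inequality $m(U^n_{k-1}) \le e^{r/2^n} m(U^n_k)$ (your orientation check is right, and matches the paper's), and both hinge on the same key identity --- your $\Phi v^n_j = (1-q)e_j$ with $\Phi = I - qS$ is literally the paper's computation $(I - e^{-r/2^n}A_{C^{2^n}})v^n_j = \epsilon_j$, since the adjacency matrix of the cycle graph $C^{2^n}$ is your shift $S$. The divergence is in how the conclusion is extracted. The paper feeds this identity into \cite[Theorem~3.1]{aHLRS}: part (a) produces the vector $y_j = \sum_{k=0}^\infty e^{-kr/2^n} = (1-e^{-r/2^n})^{-1}$, and part (c) then identifies the $v^n_j$ as the extreme points of the simplex of subinvariant probability vectors on $\Z/2^n\Z$, whence $x$ lies in their convex hull. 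You instead bypass the extreme-point machinery entirely: invertibility of the circulant ($\det\Phi = 1-q^L \ne 0$ for $r>0$), positivity of $y = \Phi x$, and the telescoped normalisation $\sum_k y_k = 1-q$ give the explicit barycentric coordinates $\mu_j = (x_j - qx_{j-1})/(1-q)$ with $x = \sum_j \mu_j v^n_j$. What each approach buys: the paper's citation is shorter on the page and thematically ties the lemma to the finite-graph subinvariance framework of \cite{aHLRS} used throughout; yours is elementary and explicit, and moreover handles $r=0$ cleanly as a separate degenerate case --- a point the paper's proof glosses over, since both the geometric series defining $y_j$ and the normalising constant in~\eqref{eq:vareps} degenerate at $r=0$ (the paper only genuinely needs the lemma for $r>0$, as Theorem~\ref{thm:mainonsubmeasures} treats $r=0$ separately, but your version is tighter on this point). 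No gaps.
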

\begin{proof}
Let $x=(x_0,x_1,\dots,x_{2^n-1})$ be the vector $\big(m(U^n_0),m(U^n_1), \dots,
m(U^n_{2^n-1})\big)$. For each $0\le j<2^n$ we have
\[
x_j
= m(U^n_{j})
= m(R_{2^{-n}}(U^n_{j+1}))
\le e^{r/2^{n}} m(U^n_{j+1})
= e^{r/2^{n}}x_{j+1},
\]
where addition in indices is modulo $2^n$. Let $C^{2^n}$ denote the graph with vertices
$\Z/2^n\Z$ and edges $\{e_j : j \in \Z/2^n\Z\}$ with $s(e_j) = j$ and $r(e_j) =
j+1\Mod{2^n}$, and let $A_{C^{2^n}}$ denote the adjacency matrix of $C^{2^n}$. Then $x$
satisfies $A_{C^{2^n}} x \le e^{r/2^{n}} x$. So $x$ is subinvariant for $A_{C^{2^n}}$ in
the sense of \cite[Theorem~3.1]{aHLRS}, and is a probability measure because $m$ is. By
\cite[Theorem~3.1(a)]{aHLRS}, there is a vector $y \in [1,\infty)^{\Z/2^n\Z}$ such that
\[
y_j = \sum_{\mu \in (C^{2^n})^*, s(\mu) = j} e^{-r/2^{n}|\mu|}
    = \sum_{k=0}^\infty e^{-kr/2^n}
    = \big(1 - e^{-r/2^{n}}\big)^{-1} \quad\text{ for $j \in \Z/2^n\Z$.}
\]
For $0\le j < 2^n$, define $\epsilon_j \in [0,\infty)^{\Z/2^n\Z}$ by
\[
\epsilon_j(k) = \begin{cases}
1 - e^{-r/2^{n}} &\text{ if $k = j$}\\
0 &\text{ otherwise.}
\end{cases}
\]
We have
\begin{align*}
(I &{}- e^{-r/2^n}A_{C^{2^n}}) v^n_j\\
    &= \frac{1 - e^{-r/2^n}}{1 - e^{-r}}(I - e^{-r/2^n}A_{C^{2^n}})\big(e^{-(2^n-j)r/2^n}, \dots, e^{-(2^n-1)r/2^n}, 1, e^{-r/2^n},\dots,e^{-(2^n-(j+1))r/2^n}\big)\\
    &= \frac{1 - e^{-r/2^n}}{1 - e^{-r}}\Big(\big(e^{-(2^n-j)r/2^n}, \dots,e^{-(2^n-1)r/2^n}, 1, e^{-r/2^n}, \dots,e^{-(2^n-(j+1))r/2^n}\big)\\
    &\hskip8em - e^{-r/2^n}\big(e^{-(2^n-(j+1))r/2^n}, \dots,e^{-(2^n-1)r/2^n}, 1, e^{-r/2^n}, \dots, e^{-(2^n-(j+2))r/2^n}\big)\Big) \\
    &= \frac{1 - e^{-r/2^n}}{1 - e^{-r}}\big(0, \dots, 0, 1 - e^{-r}, 0, \dots, 0\big)\\
    &= (1 - e^{-r/2^n})\big(0, \dots, 0, 1, 0, \dots, 0\big)\\
    &= \epsilon_j.
\end{align*}
So $v^n_j = (I - e^{-r/2^n}A_{C^{2^n}})^{-1}\epsilon_j$. Since the $\epsilon_j$ are the
extreme points of the simplex $\{\epsilon : \epsilon\cdot y = 1\}$, it follows from
\cite[Theorem~3.1(c)]{aHLRS} that the $v_j^n$ are the extreme points of the simplex of
subinvariant probability measures on $\Z/2^n\Z$. Since $x$ is a subinvariant probability
measure, it follows that it is a convex combination of the $v_j^n$.
\end{proof}

We now approximate $m_r$ by convex combinations of restrictions of Lebesgue measure.

\begin{lemma}\label{lem:approx}
For $n \in \N$ and $j \in \Z/2^n\Z$, let $U^n_j = [j/2^n, (j+1)/2^n) \subseteq \SI$, and
let $W_{n,r}$ be the simple function
\[
 W_{n,r} = \sum^{2^n-1}_{j=0} 2^n(v^n_0)_j 1_{U^n_j}.
\]
Let $m_{n,r}$ be the measure $m_{n,r}(U) = \int_U W_{n,r}(t) \,d\mu(t)$ for Borel
$U\subseteq \SI$. Then $\lim_{n \to \infty} \big\|m_r - m_{n,r}\big\|_1 = 0$.
\end{lemma}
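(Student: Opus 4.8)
The plan is to recognise $W_{n,r}$ as the cell-average of the density $W_r$ over the dyadic partition $\{U^n_j : 0 \le j < 2^n\}$, and then to exploit the monotonicity of $W_r$ to obtain a telescoping bound on the $L^1$-error. Since both $m_r$ and $m_{n,r}$ are absolutely continuous with respect to $\mu$, with densities $W_r$ and $W_{n,r}$ respectively, we have $\|m_r - m_{n,r}\|_1 = \int_\SI |W_r - W_{n,r}|\,d\mu$, so it suffices to estimate this integral. The degenerate case $r = 0$ is immediate, since then $W_r \equiv 1 \equiv W_{n,r}$; so I would assume $r > 0$ from the outset.

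The key computation is the identity $m_r(U^n_j) = (v^n_0)_j$ for every $j$. Integrating $W_r$ over $U^n_j = [j/2^n, (j+1)/2^n)$ yields
\[
\int_{U^n_j} W_r\,d\mu = \frac{1 - e^{-r/2^n}}{1 - e^{-r}}\,e^{-jr/2^n},
\]
which is exactly the $j$-th coordinate $(v^n_0)_j$ of the vector defined in~\eqref{eq:vareps}. Since $W_{n,r}$ is constant on $U^n_j$ with value $2^n(v^n_0)_j$ and $|U^n_j| = 2^{-n}$, this says that $m_{n,r}(U^n_j) = (v^n_0)_j = m_r(U^n_j)$; equivalently, the value of $W_{n,r}$ on $U^n_j$ is the average $2^n\int_{U^n_j} W_r\,d\mu$ of $W_r$ over that interval.

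With this in hand the estimate is routine. On each $U^n_j$ the constant $W_{n,r}|_{U^n_j}$, being an average of $W_r$, lies between the minimum and maximum of $W_r$ on $U^n_j$, so $|W_r(t) - W_{n,r}(t)|$ is bounded by the oscillation of $W_r$ on $U^n_j$ for every $t \in U^n_j$. As $W_r(t) = \big(r/(1-e^{-r})\big)e^{-rt}$ is decreasing, this oscillation equals $W_r(j/2^n) - W_r((j+1)/2^n)$, and hence
\[
\int_{U^n_j} |W_r - W_{n,r}|\,d\mu \le 2^{-n}\big(W_r(j/2^n) - W_r((j+1)/2^n)\big).
\]
Summing over $0 \le j < 2^n$ makes the right-hand side telescope to $2^{-n}(W_r(0) - W_r(1)) = 2^{-n}r$, giving $\|m_r - m_{n,r}\|_1 \le 2^{-n}r \to 0$.

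I do not anticipate a serious obstacle: the whole argument hinges on the observation that $W_{n,r}$ is the cell-average of $W_r$---equivalently, that $m_r$ and $m_{n,r}$ agree on every level-$n$ dyadic interval---after which monotonicity of $W_r$ forces the error to telescope. The only point requiring care is the bookkeeping in matching $\int_{U^n_j} W_r\,d\mu$ with the explicit exponential weights of~\eqref{eq:vareps}. A softer alternative, avoiding the averaging identity, would be to note the pointwise limit $W_{n,r}(t) \to W_r(t)$ together with the uniform bound $W_{n,r} \le W_r(0)$ and invoke dominated convergence on the finite measure space $\SI$; but the telescoping argument is both cleaner and yields the explicit rate $O(2^{-n})$.
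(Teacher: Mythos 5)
Your proof is correct, and its backbone is the same as the paper's: both arguments rest on the computation that the constant value $2^n(v^n_0)_j$ of $W_{n,r}$ on $U^n_j$ is exactly the average $2^n\int_{U^n_j} W_r\,d\mu$ of $W_r$ over that interval, which you verify by the same explicit integration of the exponential density against the weights in~\eqref{eq:vareps}. Where you diverge is in the concluding estimate. The paper uses the Mean Value Theorem to locate a point $c^n_j \in U^n_j$ where $W_r$ and $W_{n,r}$ agree, and then invokes uniform continuity of $W_r$ on $[0,1)$ to bound $\sup_{U^n_j}|W_r - W_{n,r}|$ by an arbitrary $\epsilon$ once $2^{-n}$ is below the relevant modulus of continuity; this gives convergence but no rate. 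You instead exploit monotonicity of $W_r$: the cell average lies between the extremes of $W_r$ on $U^n_j$, so the pointwise error is bounded by the oscillation $W_r(j/2^n) - W_r((j+1)/2^n)$, and summing telescopes to give the quantitative bound $\|m_r - m_{n,r}\|_1 \le 2^{-n}\big(W_r(0) - W_r(1)\big) = 2^{-n}r$. Your version buys an explicit $O(2^{-n})$ rate (and avoids any appeal to uniform continuity), at the cost of using the specific decreasing form of $W_r$, whereas the paper's argument would survive verbatim for any continuous density; your explicit handling of the degenerate case $r = 0$, which the paper's formulas gloss over, is also a small improvement. The only point deserving a word of care is that $W_r$, viewed on $\SI$, jumps at $0$, so ``$W_r(1)$'' in your telescoped sum should be read as the value $\frac{r}{1-e^{-r}}e^{-r}$ of the formula at $t=1$ rather than as $W_r$ of the point $1 \equiv 0$ of $\SI$; since no dyadic interval $U^n_j$ straddles $0$, this is harmless, and the paper makes the same implicit identification when it asserts uniform continuity on $[0,1)$.
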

\begin{proof}
Fix $n \in \N$ and $0 \le j < 2^{-n}$. Then the average value of $W_r$ over the interval
$U_j^n$ is
\begin{align*}
2^n \int_{U^n_j} W_r(t)\,d\mu(t)
    &= 2^n \int^{(j+1)/2^n}_{j/2^n} \Big(\frac{r}{1-e^{-r}}\Big) e^{-rt}\,d\mu(t)
    = 2^n \Big[\Big(\frac{-1}{1-e^{-r}}\Big) e^{-rt}\Big]^{(j+1)/2^n}_{j/2^n}\\
    &= \Big(\frac{-2^n}{1-e^{-r}}\Big)\Big(e^{-(j+1)r/2^n} - e^{-jr/2^n}\Big)
    = 2^n\Big(\frac{1 - e^{-r/2^n}}{1 - e^{-r}}\Big)e^{-jr/2^n}
    = 2^n(v^n_0)_j,
\end{align*}
the constant value of $W_{n,r}$ on $U_j^n$. The Mean Value Theorem---applied to $\int
W_r(t)\,d\mu(t)$---implies that there exists $c^n_j \in (j/2^n, (j+1)/2^n)$ such that
$W_r(c^n_j) = W_{n,r}(c^n_j)$.

Fix $\epsilon > 0$. The function $W_r$ is uniformly continuous on $[0,1)$, and so there
exists $N \in \N$ such that $|W_r(s) - W_r(t)| < \epsilon$ whenever $s,t \in [0,1)$
satisfy $|s-t| < 2^{-N}$. In particular, for $n \ge N$ and $0 \le j < 2^n$, the point
$c^n_j$ of the preceding paragraph satisfies
\begin{align*}
\sup\{W_r(t) - W_{n,r}(t) :{}& j/2^n \le t < (j+1)/2^n\}\\
    &= \sup\{W_r(t) - W_{n,r}(c^n_j) : j/2^n \le t < (j+1)/2^n\}\\
    &= \sup\{W_r(t) - W_r(c^n_j) : j/2^n \le t < (j+1)/2^n\}
    \le \epsilon.
\end{align*}
So for $n \ge N$,
\begin{align*}
\big\|m_r-m_{n,r}\big\|_1
    &= \int^1_0 |W_r(t) - W_{n,r}(t)|\,d\mu(t)\\
    &= \sum^{2^n-1}_{j=0} \int^{(j+1)/2^n}_{j/2^n} |W_r(t) - W_{n,r}(t)|\,d\mu(t)
    \le \sum^{2^n-1}_{j=0} \int^{(j+1)/2^n}_{j/2^n} \epsilon \,d\mu(t)
    = \epsilon,
\end{align*}
and hence $\lim_{n \to \infty} \big\|m_r - m_{n,r}\big\|_1 = 0$.
\end{proof}

\begin{cor}\label{cor:ccapprox}
Given a sequence $(\lambda^n)^\infty_{n=1}$ of vectors $\lambda^n \in [0,1]^{2^n}$
satisfying $\sum^{2^n-1}_{j=0} \lambda^n_j = 1$ for all $n$, we have
\[
    \lim_{n \to \infty} \Big\|\sum^{2^n-1}_{j=0} \lambda^n_j (m_r\circ R_{j/2^n}) - \sum^{2^n-1}_{j=0} \lambda^n_j (m_{n,r}\circ R_{j/2^n})\Big\|_1 = 0.
\]
\end{cor}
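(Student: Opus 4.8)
The plan is to reduce the whole statement to Lemma~\ref{lem:approx} by means of the triangle inequality, the key point being that the $\|\cdot\|_1$-norm on (densities of) measures is invariant under the rotations $R_s$. First I would record this invariance. Since both $m_r$ and $m_{n,r}$ are absolutely continuous with respect to Lebesgue measure $\mu$, with densities $W_r$ and $W_{n,r}$, and since $\mu$ is rotation-invariant, composing either measure with $R_s$ merely precomposes its density with $R_s$; that is, $m_r \circ R_s$ has density $t \mapsto W_r(R_s(t))$, and likewise for $m_{n,r}$. A change of variables $u = R_s(t)$, whose Jacobian is $1$, then yields
\[
\big\| m_r\circ R_s - m_{n,r}\circ R_s\big\|_1 = \int_\SI \big|W_r(R_s(t)) - W_{n,r}(R_s(t))\big|\,d\mu(t) = \big\|m_r - m_{n,r}\big\|_1
\]
for every $s \in [0,1)$.

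Next I would apply the triangle inequality to the difference of the two convex combinations, pulling the nonnegative scalars $\lambda_j^n$ outside:
\[
\Big\|\sum_{j=0}^{2^n-1}\lambda_j^n\big(m_r\circ R_{j/2^n} - m_{n,r}\circ R_{j/2^n}\big)\Big\|_1 \le \sum_{j=0}^{2^n-1}\lambda_j^n\big\|m_r\circ R_{j/2^n} - m_{n,r}\circ R_{j/2^n}\big\|_1.
\]
Combining this with the rotation-invariance computed above and with the hypothesis $\sum_{j}\lambda_j^n = 1$ collapses the right-hand side to $\|m_r - m_{n,r}\|_1$, and this bound is uniform in the choice of the probability vectors $\lambda^n$. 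Lemma~\ref{lem:approx} then gives $\|m_r - m_{n,r}\|_1 \to 0$ as $n\to\infty$, and the claim follows.

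I do not expect a genuine obstacle here; the only point needing care is the verification that rotation leaves the $\|\cdot\|_1$-norm unchanged, which is precisely where the rotation-invariance of Lebesgue measure is used. Everything else is the triangle inequality together with the normalisation $\sum_j \lambda_j^n = 1$, so that the desired convergence is inherited directly, and uniformly over the $\lambda^n$, from Lemma~\ref{lem:approx}.
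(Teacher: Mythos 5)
Your proposal is correct and follows essentially the same route as the paper: the triangle inequality pulls the $\lambda^n_j$ outside, the rotation-invariance of the $\|\cdot\|_1$-norm collapses the sum to $\|m_r - m_{n,r}\|_1$, and Lemma~\ref{lem:approx} finishes. Your explicit verification of the rotation-invariance via the densities $W_r$ and $W_{n,r}$ is a point the paper leaves implicit, but it is the same argument.
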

\begin{proof}
The triangle inequality gives
\begin{align*}
\Big\|\sum^{2^n-1}_{j=0} \lambda^n_j (m_r\circ R_{j/2^n}) - \sum^{2^n-1}_{j=0} \lambda^n_j(m_{n,r}\circ R_{j/2^n})\Big\|_1
    &\le \sum^{2^n-1}_{j=0} \lambda^n_j\big\|m_r\circ R_{j/2^n} - m_{n,r}\circ R_{j/2^n}\Big\|_1\\
    &= \big\|m_r - m_{n,r}\big\|_1,
\end{align*}
and so the result follows from Lemma~\ref{lem:approx}.
\end{proof}

\begin{proof}[Proof of Theorem~\ref{thm:mainonsubmeasures}]
We first have to show that each $m_r\circ R_s \in \mcon{r}$. To see that
$m_r\in\mcon{r}$, it suffices to prove that $W_r(R_t(t_0))\le e^{rt}W_r(t_0)$ for all
$t_0\in\SI$ and $t\in [0,\infty)$. Fix such a $t_0$ and $t$, and write $t_0-t=t_1+k$ for
$t_1 \in [0,1)$ and $0 \ge k\in\Z$. Then
\begin{align*}
W_r(R_t(t_0))
    &=W_r(t_1)
    =\Big(\frac{r}{1-e^{-r}}\Big)e^{-rt_1}
    = \Big(\frac{r}{1-e^{-r}}\Big)e^{rk}e^{-r(t_1+k)}\\
	&= \Big(\frac{r}{1-e^{-r}}\Big)e^{rk}e^{-r(t_0-t)}
	=e^{rk}e^{rt}W_r(t_0)
	\le e^{rt}W_r(t_0),
\end{align*}
where the inequality follows because $rk\le 0$. So $m_r\in\mcon{r}$. For $0\le s<1$ and
Borel $U \subseteq \SI$, we have $m_r\circ R_s(R_t(U))=m_r(R_t(R_s(U)))\le e^{rt}m_r\circ
R_s(U)$ for all $t\in [0,\infty)$ and Borel $U\subseteq\SI$, and hence $m_r\circ
R_s\in\mcon{r}$.

Since $\mcon{r}$ is convex and weak$^*$ closed, we have
$\overline{\operatorname{conv}}\{m_r\circ R_s : 0 \le s < 1\}\subseteq\mcon{r}$. For the
reverse containment, fix $m\in\mcon{r}$. For each $n \in \N$ and $0\le j <2^n$ we let
$U_j^n:=[j/2^n,(j+1)/2^n)$, and
\[
    x_n := \big(m(U^n_j)\big)^{2^n-1}_{j=0} \in [0,1]^{2^n}.
\]
By Lemma~\ref{lem:discrete graph} we can express $x_n$ as a convex combination $x_n =
\sum^{2^n-1}_{j=1} \lambda^n_j v^n_j$ of the vectors $\{v^n_0, \dots, v^n_{2^n-1}\}$
described at~\eqref{eq:vareps}. We claim that the measures
\[
    M_n:=\sum^{2^n-1}_{j=0} \lambda^n_j (m_r\circ R_{j/2^n})
\]
converge weak$^*$ to $m$. To see this, fix $f \in C(\SI)_+$. It suffices to prove that
$\int f\,dM_n \to \int f\,dm$. For each $n$, let
\[
    M'_n := \sum^{2^n-1}_{j=0} \lambda^n_j (m_{n,r}\circ R_{j/2^n}).
\]
Corollary~\ref{cor:ccapprox} shows that $\|M_n - M'_n\|_1 \to 0$ and in particular, $\int
f\,dM_n - \int f\,dM'_n \to 0$. So it suffices to prove that
\[\textstyle
    \int f\,dM'_n \to \int f\,dm.
\]

For each $n$, define $f_n : \SI \to \R$ by
\[
    f_n = \sum^{2^n-1}_{j=0} f(j/2^n)1_{U_j^n}.
\]
Since $f$ is uniformly continuous on $\SI$ we have $f_n \to f$ pointwise on $\SI$. Since
$|f|$ and each $|f_n|$ are bounded above by $\|f\|_\infty$, the Dominated Convergence
Theorem implies that $\int f_n\,dm \to \int f\,dm$. So it now suffices to prove that
\[\textstyle
    \Big|\int f_n\,dm - \int f\,dM'_n\Big| \to 0.
\]
Fix $j,k \in \Z/2\Z$. Then $(v_0^n)_{j-k} = (v_j^n)_k$, and hence
\[
\int_{U_k^n}f\, d\big(m_{n,r}\circ R_{j/2^n}\big)
    = 2^n(v_0^n)_{j-k}\int_{U_k^n}f\,d\mu
    = 2^n(v_j^n)_{k}\int_{U_k^n}f\,d\mu.
\]
Hence
\begin{align*}
\Big|\int f_n\,dm - \int f\,dM'_n\Big|
    &= \Big|\sum^{2^n-1}_{i=0} f(i/2^n)m(U_i^n) - \sum_{j=0}^{2^n-1}\lambda_j^n\Big(\sum_{k=0}^{2^n-1}\int_{U_k^n} f\,d\big(m_{n,r}\circ R_{j/2^n}\big)\Big)\Big|\\
    &= \Big|\sum^{2^n-1}_{i=0} f(i/2^n)\Big(\sum_{l=0}^{2^n-1}\lambda_l^nv_l^n\Big)_i - \sum_{j=0}^{2^n-1}\lambda_j^n\sum_{k=0}^{2^n-1}\Big(2^n(v_j^n)_{k}\int_{U_k^n}f\,d\mu\Big)\Big|\\
    &= \Big|\sum^{2^n-1}_{l=0}\lambda_l^n\sum_{i=0}^{2^n-1}\big( f(i/2^n)(v_l^n)_i\big) - \sum_{j=0}^{2^n-1}\lambda_j^n\sum_{k=0}^{2^n-1}\Big(2^n(v_j^n)_{k}\int_{U_k^n}f\,d\mu\Big)\Big|\\
    &= \Big|\sum^{2^n-1}_{j=0} \lambda_j^n\Big(\sum_{i=0}^{2^n-1}\big( f(i/2^n)(v_j^n)_i\big) - \sum_{k=0}^{2^n-1}\Big(2^n(v_j^n)_{k}\int_{U_k^n}f\,d\mu\Big)\Big)\Big|\\
    &= \Big|\sum^{2^n-1}_{j=0} \lambda_j^n\Big(\sum_{i=0}^{2^n-1}\Big( f(i/2^n)(v_j^n)_i - 2^n(v_j^n)_{k}\int_{U_k^n}f\,d\mu\Big)\Big)\Big|.
\end{align*}
Since each $\|v_j^n\|_1=1$ and each $\sum_j \lambda^n_j = 1$, the triangle inequality
gives
\begin{align*}
\Big|\int f_n\,dm - \int f\,dM'_n\Big|
    &\le \sum^{2^n-1}_{j=0} \lambda^n_j \Big|\sum^{2^n-1}_{i=0} \Big(f(i/2^{n}) - 2^n\int_{U_i^n} f\,d\mu\Big) (v_j^n)_i\Big)\Big|\\
    &\le \max_{0 \le j < 2^n} \sum^{2^n-1}_{i=0} (v_j^n)_i \Big|f(i/2^{n}) - 2^n\int_{U_i^n} f\,d\mu\Big|\\
    &\le \max_{0 \le j < 2^n} \Big(\max_{0 \le i < 2^n} \Big|f(i/2^{n}) - 2^n\int_{U_i^n} f\,d\mu\Big|\Big)\\
    &= \max_{0 \le i < 2^n} \Big|f(i/2^{n}) - 2^n\int_{U_i^n} f\,d\mu\Big|.
\end{align*}
Fix $0\le i \le 2^n$. The quantity $2^n\int_{U_i^n} f\,d\mu$ is the average value of $f$
over $U_i^n$. Since $f$ is continuous, the Mean Value Theorem implies that there exists
$c \in U_i^n$ such that $f(c) = 2^n\int_{U_i^n} f\,d\mu$. Hence
\[
\Big|\int f_n\,dm - \int f\,dM'_n\Big|
    \le \max_{0 \le i < 2^n} \sup_{c \in U_i^n} |f(i/2^n) - f(c)|.
\]
Fix $\epsilon > 0$. By uniform continuity of $f$ there exists $N$ such that $|x-y| <
2^{-N} \implies |f(x) - f(y)| < \epsilon$. For $n \ge N$ we have $\sup_{c \in U_i^n}
|f(i/2^n) - f(c)| \le \epsilon$ for all $i$, giving $\Big|\int f_n\,dm - \int
f\,dM'_n\Big| \le \epsilon$. Hence $\Big|\int f_n\,dm - \int f\,d\rho_n\Big| \to 0$. So
$m \in \overline{\operatorname{conv}}\{m_r\circ R_s : 0 \le s < 1\}$, giving $\mcon{r}
\subseteq \overline{\operatorname{conv}}\{m_r\circ R_s : 0 \le s < 1\}$ as required.

For the final statement, observe that
\[
    \mcon{0} = \{m\in M(\SI): m(R_{t}(U))\le m(U)\text{ for all $t\in[0,\infty)$ and Borel $U\subseteq \SI$}\}.
\]
So if $m \in \mcon{0}$, then $m(U) = m(R_{1-t}(R_t(U))) \le m(R_t(U)) \le m(U)$ for all
$U,t$, forcing $m(U) = m(R_t(U))$ for all $U, t$. Uniqueness of the Haar measure $\mu$ on
the compact group $\SI$ therefore gives $m = \mu$. So $\mcon{0} \subseteq \{\mu\}$. The
reverse containment is trivial.
\end{proof}

We can use Theorem~\ref{thm:mainonsubmeasures} to describe the extreme points of
$\mcon{r}$.

\begin{proposition}\label{prop:extpointsofOmega}
The set $\{m_r\circ R_s:0\le s< 1\}$ is the set of extreme points of $\mcon{r}$.
\end{proposition}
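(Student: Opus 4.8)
The plan is to leverage Theorem~\ref{thm:mainonsubmeasures}, which identifies $\mcon{r}$ with the weak$^*$-closed convex hull of $K := \{m_r\circ R_s : 0\le s<1\}$. The case $r=0$ is immediate: there $m_0=\mu$ and $\mcon{0}=\{\mu\}$, so $K=\{\mu\}$ is trivially the extreme boundary; so assume $r>0$. First I would record that $\mcon{r}$ is a weak$^*$-compact convex subset of the state space of $C(\SI)$ and that $K$ is itself weak$^*$-compact: the map $s\mapsto m_r\circ R_s$ is weak$^*$-continuous on $[0,1]$ (because $s\mapsto f\circ R_{-s}$ is norm-continuous in $C(\SI)$ for each $f$) and identifies the endpoints $s=0,1$, so $K$ is a continuous image of the circle. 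Hence $\overline{K}=K$, and Milman's converse to the Krein--Milman theorem gives $\operatorname{ext}\mcon{r}\subseteq \overline K = K$. It remains to prove the reverse inclusion, that each $m_r\circ R_s$ is extreme.

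For this I would first observe that rotations act on $\mcon{r}$ by affine homeomorphisms: since $R_s$ and $R_t$ commute, $m\mapsto m\circ R_s$ carries $\mcon{r}$ bijectively to itself with inverse $m\mapsto m\circ R_{1-s}$, and is affine and weak$^*$-continuous. Such a map preserves extreme points, so $m_r\circ R_s$ is extreme if and only if $m_r$ is. Thus everything reduces to showing that $m_r$ itself is an extreme point.

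To prove $m_r$ extreme, suppose $m_r=\tfrac12(m_1+m_2)$ with $m_1,m_2\in\mcon{r}$. Since $m_1,m_2\le 2m_r$ and $m_r$ has the strictly positive continuous density $W_r$ with respect to Lebesgue measure $\mu$, each $m_i$ is absolutely continuous with respect to $m_r$; write $m_1=g\,dm_r$ with $0\le g\le 2$ measurable, so that $m_2=(2-g)\,dm_r$. Using rotation-invariance of $\mu$, the subinvariance inequality $m_1(R_t(U))\le e^{rt}m_1(U)$ becomes $\int_U\big(g(R_tu)W_r(R_tu)-e^{rt}g(u)W_r(u)\big)\,d\mu(u)\le 0$ for every Borel $U$, whence the integrand is $\le 0$ $\mu$-a.e.\ for each fixed $t\in[0,1)$. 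The key point is the \emph{tightness} of $W_r$ computed in the proof of Theorem~\ref{thm:mainonsubmeasures}: for $u$ with no wraparound (i.e.\ $u\in[t,1)$) one has $W_r(R_tu)=e^{rt}W_r(u)$ exactly, so after cancelling the strictly positive factor $e^{rt}W_r(u)$ the inequality reads $g(v)\le g(v+t)$ for a.e.\ $v\in[0,1-t)$. Applying the same argument to $m_2=(2-g)\,dm_r$ gives the reverse inequality, so for each $t\in[0,1)$ we obtain $g(v+t)=g(v)$ for a.e.\ $v\in[0,1-t)$.

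The remaining---and most delicate---step is to upgrade this family of ``for each $t$, a.e.\ $v$'' identities to the conclusion that $g$ is $\mu$-a.e.\ constant; here I would use a Tonelli slicing argument. Since the set $\{(t,v):t\in[0,1),\ v\in[0,1-t),\ g(v+t)\ne g(v)\}$ has every $t$-slice null, it is null in the plane, so for a.e.\ $v$ its $v$-slice is null; that is, for a.e.\ $v$ one has $g\equiv g(v)$ a.e.\ on $[v,1)$. Letting such $v$ tend to $0$ forces $g$ to equal a single constant $\mu$-a.e., and the normalisation $\int g\,dm_r=m_1(\SI)=1=\int 1\,dm_r$ pins that constant at $1$. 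Hence $m_1=m_2=m_r$, so $m_r$ is extreme, completing the proof. I expect this final measure-theoretic slicing step to be the main obstacle, since the pointwise inequalities hold only up to a $t$-dependent null set and must be assembled coherently.
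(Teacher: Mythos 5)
Your proposal is correct, and its outer skeleton coincides with the paper's: both deduce $\operatorname{ext}\mcon{r}\subseteq\{m_r\circ R_s : 0\le s<1\}$ from Theorem~\ref{thm:mainonsubmeasures} together with Milman's partial converse to Krein--Milman (\cite[Proposition~1.5]{Phelps:Choquet}), after noting that $\{m_r\circ R_s\}$ is a compact continuous image of the circle, and both reduce extremality of each $m_r\circ R_s$ to extremality of $m_r$ via the affine homeomorphisms $m\mapsto m\circ R_s$ of $\mcon{r}$. Where you genuinely diverge is in the proof that $m_r$ itself is extreme. The paper argues through Lemma~\ref{lem:mismronsubints}: a finite chain of subinvariance inequalities over the grid intervals $[\frac{i}{n},\frac{i+1}{n})$ shows that every $m\in\mcon{r}$ satisfies $m([\frac{n-1}{n},1))\ge m_r([\frac{n-1}{n},1))$, with equality for all $n$ forcing $m=m_r$; extremality then follows because a nontrivial convex combination involving some $m_1\ne m_r$ is strictly larger than $m_r$ on one of these test intervals (this also handles arbitrary weights $t\in(0,1)$ directly). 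You instead write $m_1=g\,dm_r$ with $0\le g\le 2$ (legitimate, since $m_1\le 2m_r$ and $W_r$ is strictly positive), exploit the exact identity $W_r(R_t(u))=e^{rt}W_r(u)$ off the wraparound point --- the same tightness the paper records at the start of the proof of Lemma~\ref{lem:mismronsubints} --- to get $g(v)\le g(v+t)$ for a.e.\ $v\in[0,1-t)$ for each fixed $t$, run the symmetric inequality for $m_2=(2-g)\,dm_r$, and upgrade the resulting family of a.e.\ identities to constancy of $g$ by a Tonelli slicing argument. All of this checks out: the bad set $\{(t,v):g(v+t)\ne g(v)\}$ is product-measurable for a Borel version of $g$, the overlapping intervals $[v_n,1)$ with $v_n\downarrow 0$ force a single constant, and your restriction to midpoints $m_r=\frac12(m_1+m_2)$ suffices by the standard fact that midpoint-extremality equals extremality in a convex set. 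As for what each approach buys: the paper's route is wholly elementary --- finite sums of interval inequalities, no Radon--Nikodym and no Fubini --- and its intermediate lemma supplies the quantitative inequality needed for the strictness step; your route is more structural, isolating the mechanism (the density $W_r$ is exactly subinvariance-tight away from its jump at $0$, so any dominated perturbation has rotation-monotone, hence constant, density) in a form that would adapt to other tight densities, at the cost of the measure-theoretic slicing you correctly identified as the delicate point.
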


The first step in proving Proposition~\ref{prop:extpointsofOmega} will be to show that
$m_r$ itself is an extreme point of $\mcon{r}$. The following lemma will help.

\begin{lemma}\label{lem:mismronsubints}
Let $m\in\mcon{r}$ and $n\in\N$ with $n\ge 1$. If $m([\frac{n-1}{n}, 1)) \le
m_r([\frac{n-1}{n}, 1))$, then $m([\frac{i}{n}, \frac{i+1}{n})) = m_r([\frac{i}{n},
\frac{i+1}{n}))$ for all $0 \le i < n$.
\end{lemma}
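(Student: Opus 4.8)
The plan is to pass to the partition of $\SI$ into the $n$ half-open intervals $I_i := [i/n, (i+1)/n)$, $0 \le i < n$, and to prove the single family of inequalities $m(I_i) \le m_r(I_i)$ for every $i$. Granting this, the conclusion is immediate: both $\big(m(I_i)\big)_i$ and $\big(m_r(I_i)\big)_i$ sum to $1$ because $m$ and $m_r$ are probability measures, so termwise inequalities with equal totals are forced to be equalities, which is exactly the assertion $m([\tfrac{i}{n},\tfrac{i+1}{n})) = m_r([\tfrac{i}{n},\tfrac{i+1}{n}))$ for all $i$.

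The two inputs are a subinvariance inequality for $m$ and a matching \emph{equality} for $m_r$. Since $R_\gamma(t) = t - \gamma \Mod{1}$, we have $R_{1/n}(I_{i+1}) = I_i$ for $0 \le i \le n-2$ (no wrap-around occurs in this range), so taking $U = I_{i+1}$ and $t = 1/n$ in the relation defining $\mcon{r}$ gives $m(I_i) = m(R_{1/n}(I_{i+1})) \le e^{r/n} m(I_{i+1})$. On the other hand, integrating $W_r$ yields $m_r(I_i) = \frac{1 - e^{-r/n}}{1 - e^{-r}} e^{-ri/n}$, whence $m_r(I_i) = e^{r/n} m_r(I_{i+1})$ for the same range of $i$; that is, $m_r$ saturates the subinvariance bound \emph{exactly} on these ``non-wrapping'' edges. (When $r = 0$ one has $m_r = \mu$ and $m_r(I_i) = 1/n$, and the same identity holds with $e^{r/n} = 1$.)

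With these in hand I would run a downward induction on $i$ from $n-1$ to $0$. The base case $i = n-1$ is precisely the hypothesis $m(I_{n-1}) \le m_r(I_{n-1})$. For the inductive step, assuming $m(I_{i+1}) \le m_r(I_{i+1})$, I combine the two inputs to obtain $m(I_i) \le e^{r/n} m(I_{i+1}) \le e^{r/n} m_r(I_{i+1}) = m_r(I_i)$. This establishes $m(I_i) \le m_r(I_i)$ for all $0 \le i < n$, and the equal-mass argument of the first paragraph then completes the proof.

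The computation is short, so the real content is structural rather than technical. The point is that the geometric density $W_r$ makes $m_r$ saturate the subinvariance inequality \emph{exactly} on the edges $I_{i+1} \to I_i$ with $0 \le i \le n-2$, but only \emph{strictly} on the wrap-around edge $I_0 \to I_{n-1}$. This asymmetry is precisely why the hypothesis is imposed on the single ``last'' interval $[\tfrac{n-1}{n}, 1)$ and must be propagated backwards around the cycle, and why the wrap-around subinvariance relation for $m$ is never invoked. Recognising that the induction runs in this direction---and that the non-wrapping equalities for $m_r$ are exactly what drives it---is the only genuine subtlety.
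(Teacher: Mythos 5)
Your proof is correct and is essentially the paper's own argument: both rest on the partition of $\SI$ into the intervals $I_i = [\tfrac{i}{n},\tfrac{i+1}{n})$, the subinvariance estimates along the non-wrapping rotations (the paper uses $m(I_i) \le e^{(n-1-i)r/n}m(I_{n-1})$, which is exactly your one-step bound $m(I_i)\le e^{r/n}m(I_{i+1})$ composed), the fact that $m_r$ saturates these bounds exactly, and the equal total mass of the two probability measures. The only difference is bookkeeping: the paper chains everything into a single display $1 = m(\SI) \le \cdots \le m_r(\SI) = 1$ and extracts the conclusion from the forced equality throughout, whereas you prove the termwise inequalities $m(I_i)\le m_r(I_i)$ by downward induction and then invoke equal totals; these are the same estimates arranged differently, and your structural remark that only the non-wrapping edges are used (never the wrap-around relation for $m$) accurately reflects the paper's proof as well.
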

\begin{proof} First observe that by definition of $m_r$, we have $m_r(R_t(U)) =
e^{rt}m_r(U)$ whenever $U \cup U - t \subseteq [0,1)$. Using this at the fourth equality,
we note that if $m([\frac{n-1}{n}, 1)) \le m_r([\frac{n-1}{n}, 1))$, then subinvariance forces
\begin{align*}
1 = m(\SI)
    = \sum^{n-1}_{i=0} m\Big(\Big[\frac{i}{n}, \frac{i+1}{n}\Big)\Big)
    &= \sum_{i=0}^{n-1}m\Big(R_{(n-1-i)/n}\Big(\Big[\frac{n-1}{n},1\Big)\Big)\Big)\\
	&\le \sum^{n-1}_{i=0} e^{(n-1-i)r/n} m\Big(\Big[\frac{n-1}{n},1\Big)\Big)\\
	& \le \sum^{n-1}_{i=0} e^{(n-1-i)r/n} m_r\Big(\Big[\frac{n-1}{n},1\Big)\Big)\\
	&= \sum^{n-1}_{i=0} m_r\Big(\Big[\frac{i}{n}, \frac{i+1}{n}\Big)\Big)\\
	&= 1.
\end{align*}
So we have equality throughout. From this we deduce first that
\[
\sum^{n-1}_{i=0} m\Big(\Big[\frac{i}{n}, \frac{i+1}{n}\Big)\Big)
    =\sum^{n-1}_{i=0} e^{(n-1-i)r/n} m\Big(\Big[\frac{n-1}{n},1\Big)\Big).
\]
Since the subinvariance relation forces $m\big(\big[\frac{i}{n}, \frac{i+1}{n}\big)\big)
\le e^{(n-1-i)r/n} m\big(\big[\frac{n-1}{n},1\big)\big)$ for each $i$, we deduce that
$m\big(\big[\frac{i}{n}, \frac{i+1}{n}\big)\big) = e^{(n-1-i)r/n}
m\big(\big[\frac{n-1}{n},1\big)\big)$ for each $i$. Since
\[
\sum^{n-1}_{i=0} e^{(n-1-i)r/n} m\Big(\Big[\frac{n-1}{n},1\Big)\Big)
    = \sum^{n-1}_{i=0} e^{(n-1-i)r/n} m_r\Big(\Big[\frac{n-1}{n},1\Big)\Big),
\]
we also have $m\big(\big[\frac{n-1}{n},1\big)\big) =
m_r\big(\big[\frac{n-1}{n},1\big)\big)$. Hence for each $i$ we have
\[\textstyle
m\Big(\Big[\frac{i}{n}, \frac{i+1}{n}\Big)\Big)
	= e^{(n-1-i)r/n} m\Big(\Big[\frac{n-1}{n},1\Big)\Big)
	= e^{(n-1-i)r/n} m_r\Big(\Big[\frac{n-1}{n},1\Big)\Big)
	= m_r\Big(\Big[\frac{i}{n}, \frac{i+1}{n}\Big)\Big).\qedhere
\]
\end{proof}

\begin{proof}[Proof of Proposition~\ref{prop:extpointsofOmega}]
We first show that $m_r$ is an extreme point of $\mcon{r}$. First suppose $m\in\mcon{r}$
satisfies $m([\frac{n-1}{n},1))\le m_r([\frac{n-1}{n},1))$ for all $n$. We claim that
$m=m_r$. Fix $f\in C(\SI)_+$. For each $n$ define $f_n : \SI \to \R$ by
\[
    f_n = \sum^{n-1}_{i=0} f(i/n)1_{[\frac{i}{n},\frac{i+1}{n})}.
\]
The Dominated Convergence Theorem gives $\int f_n\,dm \to \int f\,dm$. By
Lemma~\ref{lem:mismronsubints},
$m([\frac{i}{n},\frac{i+1}{n}))=m_r([\frac{i}{n},\frac{i+1}{n}))$ for all $n\ge 1$ and
$0\le i<n$. Hence the Dominated Convergence Theorem gives $\int f_n\,dm = \int
f_n\,dm_r\to \int f\, dm_r$. It follows that $m=m_r$.

Now suppose that $m_1, m_2\in\mcon{r}$, $t \in (0,1)$ and that one of $m_1$ and $m_2$ is
not equal to $m_r$; say $m_1 \not= m_r$. The above claim yields $n$ such that
$m_1([\frac{n-1}{n},1)) > m_r([\frac{n-1}{n},1))$. So
\[
(t m_1 + (1-t) m_2)\Big(\Big[\frac{n-1}{n},1\Big)\Big)
    > (tm_r + (1-t)m_2)\Big(\Big[\frac{n-1}{n},1\Big)\Big)
    \ge m_r\Big(\Big[\frac{n-1}{n},1\Big)\Big),
\]
and hence $tm_1 + (1-t)m_2 \not= m_r$. So $m_r$ cannot be expressed as a nontrivial
convex combination of subinvariant probability measures, and hence is an extreme point of
$\mcon{r}$.

For $s \in \SI$, the map $m \mapsto m \circ R_s$ is an affine homeomorphism of
$\mcon{r}$, so each $m \circ R_s$ is an extreme point of $\mcon{r}$. This gives $\{m_r
\circ R_s : s \in \SI\} \subseteq \partial\mcon{r}$.

For the reverse containment, observe that the space $\mcon{r}$ of all subinvariant
probability measures on $\SI$ is a weak$^*$-compact convex subset of the Banach space of
all signed Borel measures on $\SI$. The map $s \mapsto m_r\circ R_s$ is a homeomorphism
of $\SI$ onto $Z := \{m_r\circ R_s : s \in \SI\}$. So $Z$ is compact and in particular
closed. Since $\mcon{r}$ is the closed convex hull of $Z$ it follows from
\cite[Proposition~1.5]{Phelps:Choquet} that the set of extreme points of $\mcon{r}$ is
contained in the closure of $Z$ and therefore in $Z$ itself.
\end{proof}

\section{Proof of the main theorem}\label{sec:mainproof}

We are now almost ready to prove Theorem~\ref{thm:main}. We saw in
Theorem~\ref{thm:mainKMSthm} that the $\KMS_\beta$ simplex of $\TT_\theta^\mathscr{S}$ is
affine isomorphic to the projective limit of the $\mcon{r_j}$ under the maps induced by
the covering maps $p_N : \SI \to \SI$. So we now show that these induced maps carry
extreme points to extreme points.

\begin{lemma}\label{lem:extreme covering}
Let $N\in\N$ with $N\ge 2$, $\theta=(\theta_j)_{j=0}^\infty\in\Xi_N$, and $\beta\in
(0,\infty)$. Suppose that $\theta_j \not= 0$ for all $j$. For each $j \in \N$, let $r_j
:= \frac{\beta}{N^j\theta_j}$, and let $m_{r_j}$ be the subinvariant measure on $\SI$
defined by~\eqref{eq:mr def}. For each $s \in [0,1)$, we have $m_{r_{j+1}} \circ R_s
\circ p_N^{-1} = m_{r_j} \circ R_{Ns}$.
\end{lemma}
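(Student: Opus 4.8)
The plan is to reduce the statement to the special case $s=0$ and then to a single density computation for the push-forward under the $N$-to-$1$ map $p_N$. First I would record the arithmetic relation between consecutive rates. Since $\theta\in\Xi_N$ satisfies $N^2\theta_{j+1}=\theta_j$, we have
\[
    \frac{r_{j+1}}{r_j}=\frac{N^j\theta_j}{N^{j+1}\theta_{j+1}}=\frac{\theta_j}{N\theta_{j+1}}=\frac{N^2\theta_{j+1}}{N\theta_{j+1}}=N,
\]
so that $r_{j+1}=Nr_j$. Writing $r:=r_j$, the claim becomes $m_{Nr}\circ R_s\circ p_N^{-1}=m_r\circ R_{Ns}$.

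Next I would strip off the rotation using the commutation relation $p_N\circ R_s=R_{Ns}\circ p_N$ recorded in the proof of Theorem~\ref{thm:mainKMSthm} (equivalently \eqref{eq:iotaandRk} with $k=1$), which gives $R_s(p_N^{-1}(U))=p_N^{-1}(R_{Ns}(U))$ for every Borel $U\subseteq\SI$. Then for each such $U$,
\[
    (m_{Nr}\circ R_s\circ p_N^{-1})(U)=m_{Nr}\big(R_s(p_N^{-1}(U))\big)=m_{Nr}\big(p_N^{-1}(R_{Ns}(U))\big)=(m_{Nr}\circ p_N^{-1})(R_{Ns}(U)).
\]
Hence it suffices to prove the $s=0$ identity $m_{Nr}\circ p_N^{-1}=m_r$; applying this to the set $R_{Ns}(U)$ then gives $(m_{Nr}\circ p_N^{-1})(R_{Ns}(U))=m_r(R_{Ns}(U))=(m_r\circ R_{Ns})(U)$, which is exactly the desired right-hand side.

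The heart of the argument is therefore the push-forward identity $m_{Nr}\circ p_N^{-1}=m_r$, which I would verify by testing against $f\in C(\SI)$. Since $\int f\,d(m_{Nr}\circ p_N^{-1})=\int (f\circ p_N)\,dm_{Nr}=\int_0^1 f(\{Nt\})\,W_{Nr}(t)\,dt$, I would split $[0,1)$ into the $N$ intervals $[\tfrac{k}{N},\tfrac{k+1}{N})$ and substitute $t=\tfrac{v+k}{N}$, so that $\{Nt\}=v$ and $dt=\tfrac1N\,dv$. This rewrites the integral as $\int_0^1 f(v)\big[\tfrac1N\sum_{k=0}^{N-1}W_{Nr}(\tfrac{v+k}{N})\big]\,dv$, and the bracketed density collapses: using $W_{Nr}(\tfrac{v+k}{N})=\tfrac{Nr}{1-e^{-Nr}}e^{-r(v+k)}$ and the geometric series $\sum_{k=0}^{N-1}e^{-rk}=\tfrac{1-e^{-Nr}}{1-e^{-r}}$ gives exactly $\tfrac{r}{1-e^{-r}}e^{-rv}=W_r(v)$. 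Thus $\int f\,d(m_{Nr}\circ p_N^{-1})=\int f\,dm_r$ for every $f$, whence $m_{Nr}\circ p_N^{-1}=m_r$.

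The computation is elementary, so there is no deep obstacle; the points requiring care are the fractional-part bookkeeping in the substitution (each of the $N$ sheets of $p_N^{-1}$ contributes a translate of $[0,1)$) and checking that the geometric-series collapse reproduces $W_r$ \emph{on the nose}, with the correct normalising constant $\tfrac{r}{1-e^{-r}}$ rather than that of $W_{Nr}$. I expect verifying this exact collapse --- equivalently, the clean push-forward identity $m_{Nr}\circ p_N^{-1}=m_r$ together with the rate relation $r_{j+1}=Nr_j$ --- to be the main (albeit routine) step.
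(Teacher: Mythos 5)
Your proof is correct and takes essentially the same approach as the paper's: both reduce to the case $s=0$ via the commutation relation $p_N\circ R_s=R_{Ns}\circ p_N$, both rest on the rate identity $r_{j+1}=Nr_j$ coming from $N^2\theta_{j+1}=\theta_j$, and both verify $m_{r_{j+1}}\circ p_N^{-1}=m_{r_j}$ by summing over the $N$ sheets of $p_N$ and collapsing the geometric series $\sum_{k=0}^{N-1}e^{-r_jk}=\frac{1-e^{-r_{j+1}}}{1-e^{-r_j}}$. The only cosmetic difference is that the paper evaluates the two measures directly on intervals $(a,b)$ and compares antiderivatives, whereas you test against $f\in C(\SI)$ and show the averaged density $\frac{1}{N}\sum_k W_{Nr}\big(\frac{v+k}{N}\big)$ equals $W_r(v)$ pointwise --- the same computation in a slightly different presentation.
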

\begin{proof}
We first establish the result with $s = 0$. Fix $0 \le a < b \le 1$. It suffices to prove
that $m_{r_{j+1}}\circ p_{N}^{-1}\big((a,b)\big) = m_{r_j}\big((a,b)\big)$. We have
\begin{equation}\label{eq:easy side}
m_{r_j}\big((a,b)\big)
    = \int^b_a W_{r_j}(t)\,dt
    = \frac{r_j}{1 - e^{-r_j}} \int^b_a e^{-r_j t}\,dt
    = \frac{-1}{1 - e^{-r_j}}\big(e^{-r_jb} - e^{-r_j a}\big).
\end{equation}
We also have
\begin{equation}\label{eq:hard side1}
m_{r_{j+1}} \circ p_N^{-1}\big((a,b)\big)
    = \sum^N_{i=0} m_{r_{j+1}}\bigg(\Big(\frac{a+i}{N}, \frac{b+i}{N}\Big)\bigg)
    = \sum^N_{i=0} \int_{\frac{a+i}{N}}^{\frac{b+i}{N}} W_{r_{j+1}}(t)\,dt.
\end{equation}
Since
\[
\int W_{r_{j+1}}(t)\,dt
    = \int \Big(\frac{r_{j+1}}{1-e^{-r_{j+1}}}\Big) e^{-r_{j+1}t} \,dt
    = \frac{-1}{1 - e^{-r_{j+1}}} e^{-r_{j+1} t},
\]
Equation~\eqref{eq:hard side1} gives
\begin{align}
m_{r_{j+1}} \circ p_N^{-1}\big((a,b)\big)
    &= \frac{-1}{1 - e^{-r_{j+1}}} \sum^N_{i=0} \Big[e^{-r_{j+1} t}\Big]_{\frac{a+i}{N}}^{\frac{b+i}{N}}\nonumber\\
    &= \frac{-1}{1 - e^{-r_{j+1}}} \sum^N_{i=0} e^{-\frac{i}{N}r_{j+1}}\big(e^{-\frac{b}{N} r_{j+1}} - e^{-\frac{a}{N} r_{j+1}}\big)\nonumber\\
    &= \frac{-1}{1 - e^{-r_{j+1}}} \frac{1 - e^{-r_{j+1}}}{1 - e^{-\frac{r_{j+1}}{N}}} \big(e^{-\frac{b}{N} r_{j+1}} - e^{-\frac{a}{N} r_{j+1}}\big)\nonumber\\
    &= \frac{-1}{1 - e^{-\frac{r_{j+1}}{N}}} \big(e^{-\frac{b}{N} r_{j+1}} - e^{-\frac{a}{N} r_{j+1}}\big).\label{eq:hard side}
\end{align}
Since $N^2\theta_{j+1} = \theta_j$, we have
\[
\frac{r_{j+1}}{N} = \frac{\beta/(N^{j+1}\theta_{j+1})}{N} = \beta/(N^j \cdot N^2\theta_{j+1}) = \beta/N^j\theta_j = r_j,
\]
and so~\eqref{eq:hard side} is precisely~\eqref{eq:easy side}.

Now for $s \not= 0$, observe that $p_N \circ R_s = R_{Ns} \circ p_N$ so that
$R_s(p_N^{-1}(U)) = p_N^{-1}(R_{Ns}(U))$ for all $U \subseteq \SI$. Hence
\[
m_{r_{j+1}} \circ R_s \circ p_N^{-1} = m_{r_{j+1}} \circ p_N^{-1} \circ R_{Ns} = m_{r_j} \circ R_{Ns}.\qedhere
\]
\end{proof}

We now describe the extreme points of the space $\varprojlim(\mcon{r_j}, m \mapsto m
\circ p_N^{-1})$. Given a Borel map $\psi : X \to Y$, we write $\psi_* : M(X) \to M(Y)$
for the induced map $\psi_*(m)(U) = m(\psi^{-1}(U))$.

\begin{lemma}\label{lem:extreme pts}
Take $N\in\{2, 3, \dots\}$, fix $\theta=(\theta_j)_{j=0}^\infty\in\Xi_N$, and fix
$\beta\in (0,\infty)$. Suppose that $\theta_j \not= 0$ for all $j$. For each $j \in \N$,
let $r_j := \frac{\beta}{N^j\theta_j}$, and let $m_{r_j}$ be the subinvariant measure on
$\SI$ defined by~\eqref{eq:mr def}. The map $\pi : (s_j)^\infty_{j=1} \mapsto (m_{r_j}
\circ R_{s_j})^\infty_{j=1}$ is a homeomorphism of $\varprojlim(\SI, p_N)$ onto the
set of extreme points of $\varprojlim(\mcon{r_j}, (p_N)_*)$.
\end{lemma}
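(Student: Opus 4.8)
The plan is to show that the extreme points of $\varprojlim(\mcon{r_j},(p_N)_*)$ are exactly the sequences all of whose coordinates are extreme, and then to recognise these, via Proposition~\ref{prop:extpointsofOmega} and Lemma~\ref{lem:extreme covering}, as precisely the range of $\pi$. First I would dispose of the routine structural facts about $\pi$. Since $\theta_j \neq 0$ gives $r_j = \beta/(N^j\theta_j) > 0$, Proposition~\ref{prop:extpointsofOmega} applies at every level. For $(s_j) \in \varprojlim(\SI,p_N)$ we have $N s_{j+1} = s_j$, so Lemma~\ref{lem:extreme covering} gives $(p_N)_*(m_{r_{j+1}}\circ R_{s_{j+1}}) = m_{r_j}\circ R_{Ns_{j+1}} = m_{r_j}\circ R_{s_j}$; hence $\pi$ does land in the projective limit and each coordinate of $\pi((s_j))$ is extreme. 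The map $\pi$ is continuous because each coordinate $s \mapsto m_{r_j}\circ R_s$ is weak$^*$-continuous and the projective limit carries the initial topology, and it is injective because $s \mapsto m_{r_j}\circ R_s$ is injective (as recorded in the proof of Proposition~\ref{prop:extpointsofOmega}); so as a continuous injection from a compact space into a Hausdorff space it is a homeomorphism onto its range. Conversely, if $(m_j) \in \varprojlim\mcon{r_j}$ has every $m_j$ extreme, then $m_j = m_{r_j}\circ R_{s_j}$ for unique $s_j$, and compatibility together with Lemma~\ref{lem:extreme covering} and injectivity forces $N s_{j+1} = s_j$, so $(s_j) \in \varprojlim(\SI,p_N)$. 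Thus the range of $\pi$ is exactly the set of sequences with all coordinates extreme.

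It therefore remains to identify the extreme points of $\varprojlim(\mcon{r_j},(p_N)_*)$ with the coordinatewise-extreme sequences. One inclusion is immediate: if every $m_j$ is extreme and $(m_j) = t(a_j) + (1-t)(b_j)$ is a nontrivial convex combination in the projective limit, then $m_j = t a_j + (1-t)b_j$ coordinatewise forces $a_j = b_j = m_j$, so $(m_j)$ is extreme. The reverse inclusion is the main obstacle: an extreme point of the projective limit is not obviously coordinatewise extreme, since to contradict extremality from a nontrivial decomposition $m_{j_0} = \tfrac12(a+b)$ at a single level one must lift it to compatible decompositions at \emph{every} higher level, and a naive fibrewise lift along $p_N$ need not remain subinvariant, i.e.\ need not stay inside $\mcon{r_{j+1}}$.

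To circumvent this I would pass to an abstract simplex model in which decompositions lift automatically. The plan is to show that the barycentre map $\Phi_j : M(\SI) \to \mcon{r_j}$, $\lambda \mapsto \int_\SI (m_{r_j}\circ R_s)\,d\lambda(s)$, is an affine homeomorphism: surjectivity is Theorem~\ref{thm:mainonsubmeasures}, and injectivity follows because $\Phi_j(\lambda)$ has density $W_{r_j} * \lambda$ while the Fourier coefficients $\widehat{W_{r_j}}(n) = r_j/(r_j + 2\pi i n)$ never vanish, so convolution by $W_{r_j}$ is injective on $M(\SI)$. Hence each $\mcon{r_j}$ is a Bauer simplex with extreme boundary $\Phi_j(\{\delta_s : s \in \SI\})$. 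By Lemma~\ref{lem:extreme covering} the maps $(p_N)_*$ correspond under the $\Phi_j$ to the pushforward $(p_N)_* : M(\SI) \to M(\SI)$ (both are affine and weak$^*$-continuous and agree on the point masses, whose convex hull is weak$^*$-dense), so $\varprojlim(\mcon{r_j},(p_N)_*) \cong \varprojlim(M(\SI),(p_N)_*)$.

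Finally I would use that the probability-measure functor preserves inverse limits of compact Hausdorff spaces: a compatible family $(\mu_j)$ defines a consistent positive unital functional on the dense subalgebra $\bigcup_j \operatorname{pr}_j^* C(\SI)$ of $C(\mathscr{S})$ (where $\operatorname{pr}_j : \mathscr{S} \to \SI$ are the coordinate projections), and by Riesz representation this extends uniquely to a probability measure on $\mathscr{S}$; thus $\varprojlim(M(\SI),(p_N)_*) \cong M(\mathscr{S})$. The extreme points of $M(\mathscr{S})$ are exactly the point masses $\delta_x$, $x \in \mathscr{S}$, and unwinding the identifications sends $\delta_{(s_j)}$ to $(\Phi_j(\delta_{s_j}))_j = (m_{r_j}\circ R_{s_j})_j = \pi((s_j))$. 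This shows the extreme points of $\varprojlim(\mcon{r_j},(p_N)_*)$ are precisely the range of $\pi$, completing the proof. I expect the injectivity of $\Phi_j$ (the Bauer-simplex identification) and the preservation of inverse limits by the measure functor to be the steps requiring the most care.
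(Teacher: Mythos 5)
Your proposal is correct, but it proves the hard inclusion by a genuinely different route from the paper. The paper never shows that an extreme point of $\varprojlim(\mcon{r_j},(p_N)_*)$ is coordinatewise extreme; instead it shows directly that $\varprojlim\mcon{r_j}$ equals the closed convex hull of the range of $\pi$ --- given $(m_j)$ and a basic open neighbourhood determined at a single level $k$, it uses Theorem~\ref{thm:mainonsubmeasures} to approximate $m_k$ by a finite convex combination $\sum_l t_l\,(m_{r_k}\circ R_{s_l})$ and then lifts each rotation parameter $s_l$ through the tower (choosing $N$-adic preimages, with compatibility guaranteed by Lemma~\ref{lem:extreme covering}) to produce an element of $\operatorname{conv}\pi(\varprojlim(\SI,p_N))$ in the given neighbourhood --- and then invokes Milman's partial converse to Krein--Milman \cite[Proposition~1.5]{Phelps:Choquet} to conclude that the extreme points lie in the (compact, hence closed) range of $\pi$. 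Your route instead upgrades Theorem~\ref{thm:mainonsubmeasures} to a Bauer-simplex identification: the barycentre map $\Phi_j:M(\SI)\to\mcon{r_j}$ is surjective by the theorem and injective because $\Phi_j(\lambda)$ has density $W_{r_j}\ast\lambda$ and $\widehat{W_{r_j}}(n)=r_j/(r_j+2\pi i n)\neq 0$ for all $n$ (this computation is correct), the $\Phi_j$ intertwine the two copies of $(p_N)_*$ by Lemma~\ref{lem:extreme covering} plus density of convex combinations of point masses, and the measure functor preserves inverse limits of compact Hausdorff spaces, giving $\varprojlim\mcon{r_j}\cong M(\mathscr{S})$ with extreme points the point masses $\delta_{(s_j)}\mapsto\pi((s_j))$. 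The trade-off: the paper's argument is more elementary, needing only Proposition~\ref{prop:extpointsofOmega}, the lifting trick and Milman, with no Fourier analysis and no uniqueness of representing measures; your argument is heavier at each level but delivers a strictly stronger conclusion, namely an affine homeomorphism $\varprojlim(\mcon{r_j},(p_N)_*)\cong M(\mathscr{S})$, which would let the first assertion of Theorem~\ref{thm:main} (that $\KMS_\beta(\TT_\theta^\mathscr{S},\alpha)$ is the simplex of Borel probability measures on $\mathscr{S}$) be read off immediately rather than deduced afterwards from Choquet-simplex theory, and it also settles the coordinatewise-extremality question that your first paragraph correctly flags as the crux. The routine steps you defer --- that $\Phi_j$ is weak$^*$-continuous and maps into $\mcon{r_j}$ (barycentres of probability measures on a compact convex set lie in the set), and that $\bigcup_j\operatorname{pr}_j^*C(\SI)$ is an increasing union of subalgebras (since $\operatorname{pr}_j=p_N\circ\operatorname{pr}_{j+1}$) to which Stone--Weierstrass applies --- all check out, so I see no gap.
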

\begin{proof}
Since the $\mcon{r_j}$ are compact convex sets and $(p_N)_*$ is affine and
continuous, the projective limit $\varprojlim \mcon{r_j}$ is a compact convex set. The
map $\pi$ is continuous, so its range is compact and hence closed. So to see that the
image of $\pi$ contains all of the extreme points of $\varprojlim \mcon{r_j}$, it
suffices by \cite[Proposition~1.5]{Phelps:Choquet} to show that $\varprojlim \mcon{r_j}$
is contained in the closed convex hull of the $\pi\big((s_j)^\infty_{j=1}\big)$.

For this, fix a point $(m_j)^\infty_{j=1} \in \varprojlim \mcon{r_j}$. Take an open
neighbourhood $U$ of $(m_j)$. By definition of the projective-limit topology, there exist
$k \in \N$ and $U_k \subseteq \mcon{r_k}$ open such that the cylinder set $Z(U_k)$ satisfies $(m_j)^\infty_{j=1} \in Z(U_k)
\subseteq U$. By Theorem~\ref{thm:mainonsubmeasures}, there exist $t_1, \dots, t_L \in
[0,1]$ with $\sum t_l = 1$ such that
\[\textstyle
\sum^L_{l=1} t_l (m_{r_k} \circ R_{s_l}) \in U_k.
\]
Now for each $j \in \N$, define $m'_j := \sum^L_{l=1} t_l (m_{r_l} \circ R_{N^{j-l} s_l})$.
Lemma~\ref{lem:extreme covering} shows that for $j \le j' \in \N$ we have $m'_j =
(p_N)_*^{j'-j}(m'_{j'})$, and so $(m'_j)^\infty_{j=1} \in \varprojlim \mcon{r_j}$. For $l
\le L$, we have $(m_{r_l} \circ R_{N^{j-l} s_l})^\infty_{j=1} = \pi\big((N^{j-l}
s_l)^\infty_{j=1}\big)$, and so
\[
    (m'_j)^\infty_{j=1} \in \operatorname{conv}\pi\big(\varprojlim(\SI, p_N)\big) \cap U.
\]
That is, $\varprojlim \mcon{r_j} \subseteq
\overline{\operatorname{conv}}\big(\pi\big(\varprojlim \SI\big)\big)$. So the range of
$\pi$ contains all the extreme points of $\varprojlim(\mcon{r_j}, (p_N)_*)$.

For the reverse containment, it suffices to show that each
$\pi\big((s_j)^\infty_{j=1}\big)$ is an extreme point of $\varprojlim \mcon{r_j}$. For
this, suppose that $t \in (0,1)$ and $m', m'' \in \varprojlim \mcon{r_j}$ satisfy
\[
\pi\big((s_j)^\infty_{j=1}\big) = tm' + (1-t)m''.
\]
For each $j$,
\[
m_{r_j} \circ R_{s_j}
    = \pi\big((s_j)^\infty_{j=1}\big)_j
    = (tm' + (1-t)m'')_j
    = tm'_j + (1-t)m''_j.
\]
Proposition~\ref{prop:extpointsofOmega} shows that each $m_{r_j} \circ R_{s_j}$ is an
extreme point of $\mcon{r_j}$, forcing $m'_j = m''_j = m_{r_j} \circ R_{s_j}$. So $m' =
m'' = \pi\big((s_j)^\infty_{j=1}\big)$.

Finally, $\pi$ is a homeomorphism onto its range because it is a continuous injection
from a compact space to a Hausdorff space.
\end{proof}

The final ingredient needed for the proof of Theorem~\ref{thm:main} is a suitable action
$\lambda$ of $\mathscr{S}$ on $\TT^{\mathscr{S}}_\theta$.

\begin{lemma}\label{lem:S-action}
There is an action $\lambda$ of $\mathscr{S} = \varprojlim(\SI, p_N)$ on
$\TT^{\mathscr{S}}_\theta$ such that
\[
\lambda_{(s_j)^\infty_{j=1}}\big(\psi_{j,\infty}(s_{\theta_j}^a i_{\theta_j}(f) s_{\theta_j}^{*b})\big)
    = \psi_{j,\infty}\big(s_{\theta_j}^a i_{\theta_j}(f \circ R_{s_j}) s_{\theta_j}^{*b}\big)
\]
for all $j,a,b \ge 0$ and $f \in C(\SI)$.
\end{lemma}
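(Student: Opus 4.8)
The plan is to build $\lambda$ from compatible rotation automorphisms on the approximating subalgebras and then pass to the direct limit. First I would fix $j$ and $s \in \SI$ and observe that the pair $(\pi_s, s_{\theta_j})$, where $\pi_s(f) := i_{\theta_j}(f \circ R_s)$, is a Toeplitz pair for $E_{\theta_j}$: since $R_s$ and $R_{\theta_j}$ commute, the Toeplitz relation $s_{\theta_j} i_{\theta_j}(g) = i_{\theta_j}(g \circ R_{\theta_j}) s_{\theta_j}$ gives $s_{\theta_j}\pi_s(f) = i_{\theta_j}(f \circ R_s \circ R_{\theta_j}) s_{\theta_j} = \pi_s(f \circ R_{\theta_j}) s_{\theta_j}$. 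The universal property of $\TT(E_{\theta_j})$ from Proposition~\ref{prop:TC&CPairs C*s} then yields a homomorphism $\sigma^j_s : \TT(E_{\theta_j}) \to \TT(E_{\theta_j})$ with $\sigma^j_s(i_{\theta_j}(f)) = i_{\theta_j}(f \circ R_s)$ and $\sigma^j_s(s_{\theta_j}) = s_{\theta_j}$. Because $R_s \circ R_{s'} = R_{s+s'}$ and $R_0 = \id_\SI$, these satisfy $\sigma^j_s \circ \sigma^j_{s'} = \sigma^j_{s+s'}$ and $\sigma^j_0 = \id$ on generators, hence everywhere, so each $\sigma^j_s$ is an automorphism and $s \mapsto \sigma^j_s$ is an action of $\SI$ on $\TT(E_{\theta_j})$.

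The heart of the argument is the compatibility of these actions with the connecting maps $\psi_j$. Given $(s_j)_{j=1}^\infty \in \mathscr{S}$, so that $Ns_{j+1} = s_j$ in $\SI$, I would check $\psi_j \circ \sigma^j_{s_j} = \sigma^{j+1}_{s_{j+1}} \circ \psi_j$ on the generators $i_{\theta_j}(f)$ and $s_{\theta_j}$. On $s_{\theta_j}$ both sides equal $s_{\theta_{j+1}}^N$, since each $\sigma$ fixes the isometry. On $i_{\theta_j}(f)$ the two sides are $i_{\theta_{j+1}}(f \circ R_{s_j} \circ p_N)$ and $i_{\theta_{j+1}}(f \circ p_N \circ R_{s_{j+1}})$, so equality reduces to the identity $R_{s_j} \circ p_N = p_N \circ R_{s_{j+1}}$ of self-maps of $\SI$. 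This is exactly where the defining relation of $\mathscr{S}$ enters: $R_{s_j}(p_N(t)) = Nt - s_j = Nt - Ns_{j+1} = p_N(R_{s_{j+1}}(t))$, using $s_j = Ns_{j+1}$. I expect this recognition — that the intertwining of the rotation automorphisms with the $\psi_j$ is \emph{equivalent} to $(s_j)$ lying in the solenoid — to be the main conceptual point, although the verification itself is short.

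With compatibility in hand, the universal property of $\TT^{\mathscr{S}}_\theta = \varinjlim(\TT(E_{\theta_j}), \psi_j)$ supplies, for each $(s_j) \in \mathscr{S}$, a homomorphism $\lambda_{(s_j)}$ with $\lambda_{(s_j)} \circ \psi_{j,\infty} = \psi_{j,\infty} \circ \sigma^j_{s_j}$ for all $j$; applying the construction to $(-s_j) \in \mathscr{S}$ produces a two-sided inverse, so $\lambda_{(s_j)}$ is an automorphism. Evaluating on $s_{\theta_j}^a i_{\theta_j}(f) s_{\theta_j}^{*b}$ and using that $\sigma^j_{s_j}$ fixes $s_{\theta_j}$ and sends $i_{\theta_j}(f)$ to $i_{\theta_j}(f \circ R_{s_j})$ yields the displayed formula. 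That $(s_j) \mapsto \lambda_{(s_j)}$ is a homomorphism from $\mathscr{S}$ into $\Aut \TT^{\mathscr{S}}_\theta$ follows from $\sigma^j_{s_j} \circ \sigma^j_{s'_j} = \sigma^j_{s_j + s'_j}$ coordinatewise, with $\lambda_{\mathbf{0}} = \id$.

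Finally I would establish strong continuity of $\lambda$ by the usual $\varepsilon/3$ argument on the dense spanning set $\{\psi_{j,\infty}(s_{\theta_j}^a i_{\theta_j}(f) s_{\theta_j}^{*b})\}$ furnished by Proposition~\ref{prop: spanning families}. On such an element, continuity of $(s_j) \mapsto \lambda_{(s_j)}(\cdot)$ reduces to continuity of $s \mapsto i_{\theta_j}(f \circ R_s)$, which holds because $s \mapsto R_s$ is uniformly continuous on $\SI$ (so $f \circ R_s \to f \circ R_{s_0}$ uniformly, hence $i_{\theta_j}(f \circ R_s) \to i_{\theta_j}(f \circ R_{s_0})$ in norm), precomposed with the continuous coordinate projection $\mathscr{S} \to \SI$, $(s_k) \mapsto s_j$.
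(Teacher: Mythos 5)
Your proposal is correct and follows essentially the same route as the paper: rotation automorphisms on each $\TT(E_{\theta_j})$, the intertwining identity $\psi_j \circ \sigma^j_{s_j} = \sigma^{j+1}_{s_{j+1}} \circ \psi_j$ resting on $R_{s_j} \circ p_N = p_N \circ R_{s_{j+1}}$ (the solenoid relation $s_j = Ns_{j+1}$), and the universal property of the direct limit. The only cosmetic differences are that you build each level-$j$ automorphism from the universal property of Proposition~\ref{prop:TC&CPairs C*s} applied to the Toeplitz pair $\big(f \mapsto i_{\theta_j}(f \circ R_s),\, s_{\theta_j}\big)$, whereas the paper invokes the automorphism $s \mapsto s+t$ of the topological graph $E_{\theta_j}$ itself, and that you verify strong continuity explicitly, which the paper leaves implicit.
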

\begin{proof}
For each $j \in \N$, and each $t \in \SI$, there is an automorphism of the topological
graph $E_{\theta_j}$ given by $s \mapsto s+t$ for $s \in E_{\theta_j}^0 = \SI$, and $s
\mapsto s+t$ for $s \in E_{\theta_j}^1 = \SI$. This automorphism induces an automorphism
$\lambda_{j,t}$ of $\TT(E_{\theta_j})$ such that $\lambda_{j,t}(s_{\theta_j}^a
i_{\theta_j}(f) s_{\theta_j}^{*b}) = s_{\theta_j}^a i_{\theta_j}(f \circ R_t)
s_{\theta_j}^{*b}$ for all $j,a,b \ge 0$ and $f \in C(\SI)$.

Since $\lambda_{j,t}(s_{\theta_j}) = s_{\theta_j}$ and $\lambda_{j,t}(i_{\theta_j}(f)) =
i_{\theta_j}(f \circ R_t)$ for all $f \in C(\SI)$, a routine calculation shows that for
$(s_j)^\infty_{j=1} \in \mathscr{S}$, we have $\psi_j \circ \lambda_{j, s_j} =
\lambda_{j+1, s_{j+1}} \circ \psi_j$, and so the universal property of the direct limit
yields the desired action $\lambda$ of $\mathscr{S}$ on $\varinjlim(\TT(E_{\theta_j}),
\psi_j) = \TT^{\mathscr{S}}_\theta$.
\end{proof}

\begin{proof}[Proof of Theorem~\ref{thm:main}]
Theorem~\ref{thm:mainKMSthm} yields an affine isomorphism
\[
\omega : \KMS_\beta(\TT_\theta^\mathscr{S},\alpha)\to \varprojlim(\mcon{r_j}, (p_N)_*).
\]
Lemma~\ref{lem:extreme pts} shows that the space of extreme points of $\varprojlim
\mcon{r_j}$ is homeomorphic to the solenoid $\varprojlim \SI$, so the extreme boundary of
$\KMS_\beta(\TT_\theta^\mathscr{S},\alpha)$ is homeomorphic to $\varprojlim \SI$. As
discussed on pages 141~and~138 of \cite{TakesakiWinnink:CMP1973}, the set of KMS states
for a given dynamics on a unital $C^*$-algebra at given inverse temperature $\beta$ is a
Choquet simplex. So $\KMS_\beta(\TT_\theta^\mathscr{S},\alpha)$ is a Choquet simplex, and
therefore affine isomorphic to the simplex of Borel probability measures on its extreme
boundary.

We claim that the action $\lambda$ of Lemma~\ref{lem:S-action} induces a free and
transitive action of $\mathscr{S}$ on the extreme boundary of the KMS$_\beta$-simplex.
The formula~\eqref{eq:KMS formula} shows that for $l \in \N$, we have
\[
\omega(\phi \circ \lambda_{(s_j)^\infty_{j=1}})_l
    = \omega(\phi)_l \circ R_{s_l}.
\]
That is, for $(m_j)^\infty_{j=1} \in \varprojlim(\mcon{r_j})$, we have
$\omega^{-1}((m_j)^\infty_{j=1}) \circ \lambda_{(s_j)^\infty_{j=1}} = \omega^{-1}((m_j
\circ R_{s_j})^\infty_{j=1})$. In particular, if $\pi : \varprojlim \SI \to \varprojlim
\mcon{r_j}$ is the map of Lemma~\ref{lem:extreme pts}, then
\[
\omega^{-1}(\pi((t_j)^\infty_{j=1})) \circ \lambda_{(s_j)^\infty_{j=1}}
    = \omega^{-1}(\pi((t_j - s_j)^\infty_{j=1})).
\]
That is, the homeomorphism $\omega^{-1} \circ \pi$ of $\mathscr{S}$ onto the extreme
boundary of $\KMS_\beta(\TT^{\mathscr{S}}_\theta, \alpha)$ intertwines $\lambda$ with the
action of $\mathscr{S}$ on itself by translation, which is free and transitive.

Now suppose that $\beta = 0$. Then each $\mcon{r_j} = \mcon{0} = \{\mu\}$, and so
Theorem~\ref{thm:mainKMSthm} gives an affine injection of
$\KMS_0(\TT_\theta^\mathscr{S},\alpha)$ into the 1-point space $\varprojlim(\{\mu\},
\operatorname{id})$. So there is at most one $\KMS_0$-state. That there is one follows
from a standard argument: Choose $\beta_n \in (0,\infty)$ converging to $0$. For each
$n$, fix $\phi_n \in \KMS_{\beta_n}(\TT_\theta^\mathscr{S}, \alpha)$.
Weak$^*$-compactness of the state space ensures that the $\phi_n$ have a convergent
subsequence. Its limit is a $\KMS_0$-state by \cite[Proposition 5.3.23]{BRII}.

It remains to show that the $\KMS_0$ state is the only one that factors through
$\AA^{\mathscr{S}}_\theta$, and that there are no $\KMS_\beta$ states for $\beta < 0$.
For any $\beta$, if $\phi$ is a $\KMS_\beta$ state of $\TT^{\mathscr{S}}_\theta$, then in
particular,
\begin{equation}\label{eq:phi on gap}
\phi(\psi_{1,\infty}(s_{\theta_1} s^*_{\theta_1}))
    = \phi(\psi_{1,\infty}(s^*_{\theta_1})\alpha_{i\beta}(\psi_{1,\infty}(s_{\theta_1})))
    = e^{-\beta}\phi(\psi_{1,\infty}(s^*_{\theta_1}s_{\theta_1}))
    = e^{-\beta}\phi(1_{\TT_\theta^\mathscr{S}}),
\end{equation}
and since $\phi$ is a state, we deduce that $\phi(1_{\TT_\theta^\mathscr{S}} -
\psi_{1,\infty}(s_{\theta_1} s^*_{\theta_1})) = 1 - e^{-\beta}$. Since $s_{\theta_1}$ is
an isometry, we have $1_{\TT_\theta^\mathscr{S}} - \psi_{1,\infty}(s_{\theta_1}
s^*_{\theta_1}) \ge 0$ forcing $1 - e^{-\beta} \ge 0$ and hence $\beta \ge 0$. So there
are no $\KMS_\beta$ states for $\beta < 0$.

If $\beta > 0$, then~\eqref{eq:phi on gap} shows that $\phi(1_{\TT_\theta^\mathscr{S}} -
\psi_{1,\infty}(s_{\theta_1} s^*_{\theta_1})) > 0$, whereas the image of
$1_{\TT_\theta^\mathscr{S}} - \psi_{1,\infty}(s_{\theta_1} s^*_{\theta_1})$ in
$\AA^{\mathscr{S}}_\theta$ is equal to zero. Hence $\phi$ does not factor through
$\AA^{\mathscr{S}}_\theta$.

It remains to prove that if $\phi$ is a $\KMS_0$ state, then $\phi$ factors through
$\AA^{\mathscr{S}}_\theta$. Equation~\ref{eq:phi on gap} implies that
$\phi(1_{\TT_\theta^\mathscr{S}} - \psi_{1,\infty}(s_{\theta_1} s^*_{\theta_1})) = 0$.
The projection $1_{\TT_\theta^\mathscr{S}} - \psi_{1,\infty}(s_{\theta_1}
s^*_{\theta_1})$ is fixed by $\alpha$, and Lemma~\ref{lem:kernel} implies that it
generates the kernel of the quotient map $q : \TT^{\mathscr{S}}_\theta \to
\AA^{\mathscr{S}}_\theta$. So \cite[Lemma~2.2]{aHLRS} implies that $\phi$ factors through
$\AA^{\mathscr{S}}_\theta$.
\end{proof}

\end{document}